\newenvironment{thmenum}
 {\begin{enumerate}[label=\upshape(\arabic*),ref=\theexample(\arabic*),leftmargin=20pt]}
 {\end{enumerate}}
\newtheorem{lemma}[equation]{Lemma}
\newtheorem{theorem}[equation]{Theorem}
\newtheorem{proposition}[equation]{Proposition}
\newtheorem{corollary}[equation]{Corollary}
\newtheorem*{theoremp}{Theorem}
\newtheorem*{propositionp}{Proposition}
\theoremstyle{definition}
\newtheorem{definition}[equation]{Definition}
\newtheorem*{remark*}{Remark}
\newtheorem{remark}[equation]{Remark}
\newtheorem{example}[equation]{Example}
\numberwithin{equation}{section}
\newcommand{\hooklongrightarrow}{\lhook\joinrel\longrightarrow}
\newcommand{\Ts}{ T}
\newcommand{\Gbar}{\overline{G_{\ad}}}
\newcommand{\Zbar}{\overline{\mathfrak{Z}}}
\newcommand{\Z}{\mathfrak{Z}}
\newcommand{\Tbar}{\overline{T}}
\newcommand{\Tlog}{T^\ast_D\Gbar}
\newcommand{\Vino}{\accentset{\circ}{V}_G}
\newcommand{\Genh}{G_{\text{enh}}}
\newcommand{\Stab}{\text{Stab}}
\newcommand{\mubar}{\overline{\mu}}
\newcommand{\alphao}{\accentset{\circ}{\alpha}}
\newcommand{\tauo}{\accentset{\circ}{\tau}}
\newcommand{\immub}{\Delta}
\newcommand{\pib}{\overline{\pi}}
\newcommand{\DG}{\mathbf{D}_{G_{\ad}}}
\newcommand{\gr}{\mathbf{Gr}_{G^\vee}}
\newcommand{\DGlog}{\mathbf{D}_{\overline{G_{\ad}}}}
\newcommand{\DGlogi}{\mathbf{D}_{\overline{G_{\ad}},I}}
\DeclareMathOperator{\im}{im}
\DeclareMathOperator{\spec}{Spec}
\DeclareMathOperator{\Ad}{Ad}
\DeclareMathOperator{\ad}{ad}
\title{Steinberg slices and group-valued moment maps}
\author{Ana B\u{a}libanu}
\address{Department of Mathematics, Harvard University, 1 Oxford Street, Cambridge, MA  02138, USA}
\email{ana@math.harvard.edu}
\date{}
\begin{document}
\maketitle

\begin{abstract}
We define a class of transversal slices in spaces which are quasi-Poisson for the action of a complex semisimple group $G$. This is a multiplicative analogue of Whittaker reduction. One example is the multiplicative universal centralizer $\Z$ of $G$, which is equipped with the usual symplectic structure in this way. We construct a smooth relative compactification $\Zbar$ by taking the closure of each centralizer fiber in the wonderful compactification of $G$. By realizing this relative compactification as a transversal in a larger quasi-Poisson variety, we show that it is smooth and log-symplectic.
\end{abstract}

\setcounter{tocdepth}{1}
\tableofcontents

\section*{Introduction}
Let $G$ be a simply-connected, complex semisimple group, and let $G_{\ad}$ be its adjoint form. The group $G_{\ad}$ acts on $G$ by conjugation, and $G$ contains a transversal slice $\Sigma$ for this action which was introduced by Steinberg \cite{ste:65}. The resulting (multiplicative) universal centralizer is the smooth affine variety
\[\Z\colonequals \left\{(a,h)\in G_{\ad}\times\Sigma\mid a\in G_{\ad}^h\right\}.\]
This family of centralizers first appeared in work of Lusztig \cite[Section 8.6]{lus:76}. When $G$ is simply-laced, Bezrukavnikov, Finkelberg, and Mirkovic \cite{bez.fin.mir:05} showed that its coordinate ring is isomorphic to the equivariant $K$-theory of the affine Grassmannian of the Langlands dual group $G^\vee$---therefore, in this case $\Z$ is an example of a Coulomb branch as defined by Nakajima \cite{nak:17}. 

The natural symplectic structure on $\Z$ is inherited from the nondegenerate quasi-Poisson structure on the double $\DG\colonequals G_{\ad}\times G$ as described, up to a finite central quotient, by Finkelberg and Tsymbaliuk \cite{fin.tsy:19}. We construct a relative compactification 
\[\Zbar\colonequals \left\{(a,h)\in \Gbar\times\Sigma\mid a\in \overline{G_{\ad}^h}\right\}\]
of $\Z$, by taking the closure of each centralizer fiber inside the wonderful compactification $\Gbar$. We show that $\Zbar$ is smooth, and that the symplectic structure on $\Z$ extends to a log-symplectic Poisson structure on $\Zbar$. When $G$ is simply-laced, the relative compactification $\Zbar$ can be obtained from the Coulomb branch definition given in \cite{bez.fin.mir:05} through a Rees construction which comes from the Vinberg monoid. Therefore, in this case $\Zbar$ is an example of a Rees algebra approach to compactified Couloumb branches suggested by Braverman, Finkelberg, and Nakajima \cite[Remark 3.7]{bra.fin.nak:19}.

\subsection*{The additive setting}
Let $\mathfrak{g}$ be the Lie algebra of $G$ and fix a regular $\mathfrak{sl}_2$-triple $\{e,h,f\}$. The regular nilpotent element $e$ is contained in a unique Borel subalgebra $\mathfrak{b}$, and we write $\mathfrak{u}$ for its nilpotent radical and $U$ for the corresponding subgroup of $G$. 

Suppose that $M$ is a Poisson manifold with a Hamiltonian action of $G$ and, identifying the Lie algebra $\mathfrak{g}$ with its dual via the Killing form isomorphism, let $\nu\colon M\longrightarrow\mathfrak{g}$ be the associated moment map. The \emph{Whittaker reduction} of $M$ is the Poisson manifold
\[\nu^{-1}(f+\mathfrak{b})/U\]
obtained by Hamiltonian reduction with respect to the action of $U$ at a point corresponding to a regular character in $\mathfrak{u}^*$.

On the other hand, one can consider the principal slice of regular elements 
\[\mathcal{S}\colonequals f+\mathfrak{g}^e\subset\mathfrak{g},\] 
which was defined by Kostant \cite{kos:63} and which is contained in the space $f+\mathfrak{b}$. Because $\mathcal{S}$ meets every regular adjoint orbit exactly once and transversally, it is a Poisson transversal for the Kirillov--Kostant--Souriau Poisson structure on $\mathfrak{g}$. This implies that the preimage $\nu^{-1}(\mathcal{S})$ is a Poisson transversal in $M$, and in this way inherits a natural Poisson structure.

These two constructions are related through the Kostant Cross-section Theorem \cite[Theorem 8]{kos:63}, which states that $\mathcal{S}$ is a slice for the free action of $U$ on $f+\mathfrak{b}$. The induced isomorphism of Poisson manifolds
\[\nu^{-1}(\mathcal{S})\cong \nu^{-1}(f+\mathfrak{b})/U,\]
shows that classical Whittaker reduction coincides with the preimage of the Kostant section $\mathcal{S}$ under the moment map.

This approach is used in \cite{bal:19} to construct a canonical relative compactification of the universal centralizer 
\[\mathcal{Z}\colonequals \left\{(a,x)\in G_{\ad}\times\mathcal{S}\mid a\in G_{\ad}^x\right\}.\]
This is a symplectic variety obtained by Whittaker reduction relative to the $G\times G$-action on the cotangent bundle $T^*G_{\ad}$. The canonical symplectic structure on $T^*G_{\ad}$ extends to a log-symplectic structure on the logarithmic cotangent bundle $\Tlog$. It is shown in \cite{bal:19} that the universal centralizer $\mathcal{Z}$ has a smooth, log-symplectic relative compactification 
\[\overline{\mathcal{Z}}\colonequals \left\{(a,x)\in \Gbar\times\mathcal{S}\mid a\in\overline{G_{\ad}^x}\right\},\]
which is the Whittaker reduction of $\Tlog$. In view of the discussion above, there is a commutative diagram of moment maps
\begin{equation*}
\begin{tikzcd}[row sep=large]
T^*G_{\ad}\arrow[r, hook] \arrow[rd, swap, "\nu"]&\Tlog\arrow[d, "\overline{\nu}"]\\
					&\mathfrak{g}\times\mathfrak{g}.
\end{tikzcd}
\end{equation*}
The varieties $\mathcal{Z}$ and $\overline{\mathcal{Z}}$ are simply the preimages of the principal slice $\mathcal{S}\times(-\mathcal{S})$ under $\nu$ and $\overline{\nu}$, and their Poisson structures are therefore also obtained via restriction in this way.

\subsection*{Summary of results}
We give a multiplicative analogue of these results by considering manifolds which are quasi-Poisson relative to the action of $G$. These can be viewed as deformations of ordinary Poisson structures in which the Jacobi identity is twisted by a canonical trivector field induced by the group action. They were introduced in a series of papers by Alekseev, Malkin, and Meinrenken \cite{ale.mal.mei:98}, Alekseev and Kosmann-Schwarzbach \cite{ale.kos:00}, and Alekseev, Kosmann-Schwarzbach, and Meinreken \cite{ale.kos.mei:02}. These manifolds come equipped with group-valued momentum maps, and they are foliated by nondegenerate leaves. 

The geometry of quasi-Poisson structures makes the multiplicative setting more subtle than the additive case described above. In this setting there is no na\"ive analogue of Whittaker reduction, because one cannot generally perform quasi-Hamiltonian reduction with respect to the action of a subgroup. This is because a quasi-Poisson $G$-manifold is generally not quasi-Poisson for the action of a subgroup of $G$. To fix this issue, we introduce a multiplicative counterpart of Whittaker reduction which uses transversal slices. Kostant's principal slice $\mathcal{S}$ is replaced by the Steinberg cross-section $\Sigma$ of $G$, and we prove the following theorem, as Corollary \ref{slice-cor}:

\begin{theoremp}
Let $M$ be a quasi-Poisson $G$-manifold with group-valued moment map $\Phi\colon M\longrightarrow G$. The Steinberg slice
\[M_\Sigma\colonequals \Phi^{-1}(\Sigma)\]
is a smooth submanifold of $M$. The quasi-Poisson structure on $M$ pulls back to a Poisson structure on $M_\Sigma$ whose symplectic leaves are the intersections of $M_\Sigma$ with the nondegenerate leaves of $M$.
\end{theoremp}

The natural Poisson structure on $M_\Sigma$ is ``transverse'' to the quasi-Poisson structure on $M$, in the sense that it intersects every nondegenerate leaf transversally and symplectically. We use this approach to construct multiplicative analogues of several Whittaker-type algebraic varieties. We also define the notion of \emph{log-nondegenerate} quasi-Poisson structures, which are a multiplicative analogue of log-symplectic structures. In Proposition \ref{prop:stein-lognon} we show that the log-nondegeneracy condition behaves well with respect to Steinberg slices:

\begin{propositionp}
If $M$ is a log-nondegenerate quasi-Poisson $G$-manifold, the Poisson structure induced on the Steinberg slice $M_\Sigma$ is log-symplectic.
\end{propositionp}

We then apply this framework to the multiplicative universal centralizer $\mathfrak{Z}$. This variety sits as a Steinberg slice in the double $\DG=G_{\ad}\times G$, which is the quasi-Poisson analogue of the cotangent bundle of $G$. In fact, using the identification $T^*G_{\ad}\cong G_{\ad}\times\mathfrak{g}$, the cotangent bundle $T^*G_{\ad}$ is a bundle of Lie algebras over $G_{\ad}$ and $\DG$ is the simply-connected group scheme which integrates it. In Proposition \ref{logdouble} we extend the group scheme  $\DG$ to a group scheme over $\Gbar$, which we call the \emph{logarithmic double}.

\begin{propositionp}
The double $\DG$ extends to a group scheme $\DGlog$ over $\Gbar$ which integrates the bundle of Lie algebras $\Tlog$. The nondegenerate quasi-Poisson structure on $\DG$ extends to a log-nondegenerate quasi-Poisson structure on $\DGlog$. 
\end{propositionp}

This produces a commutative diagram of group-valued moment maps
\begin{equation*}
\begin{tikzcd}[row sep=large]
\DG	\arrow[r, hook] \arrow[rd, swap, "\mu"]&\DGlog\arrow[d, "\mubar"]\\
					&G\times G.
\end{tikzcd}
\end{equation*}
We write $\iota\colon G\longrightarrow G$ for the inversion map, and we prove the following theorem, as Theorem \ref{bigmain}:

\begin{theoremp}
There is an isomorphism of varieties
\[\Zbar\cong\mubar^{-1}(\Sigma\times\iota(\Sigma))\]
between the compactified universal centralizer $\Zbar$ and the Steinberg slice $\mubar^{-1}(\Sigma\times\iota(\Sigma))$ of the logarithmic double $\DGlog$. Therefore $\Zbar$ is a smooth, log-symplectic Poisson variety whose open dense symplectic leaf is $\Z$.
\end{theoremp}

\subsection*{Outline} 
In Section \ref{sec:background} we review quasi-Poisson manifolds as developed in \cite{ale.kos.mei:02}. We also outline how they fit into the framework of twisted Dirac structures, as described in \cite{bur.cra:05} and \cite{bur.cra:09}. In Section \ref{sec:stein} we develop a theory of Steinberg slices in quasi-Poisson manifolds. We give several examples of these slices, including a multiplicative analogue of the twisted cotangent bundle of the base affine space. Then we define the notion of log-nondegeneracy for quasi-Poisson manifolds, and we show that Steinberg slices in log-nondegenerate quasi-Poisson manifolds are log-symplectic.

In Section \ref{sec:mid} we recall the multiplicative universal centralizer $\Z$, which is a Steinberg slice in the double $\DG$, and we review some facts about the geometry of the wonderful compactification $\Gbar$. In Section \ref{sec:dlog} we use the Vinberg monoid to construct the smooth group scheme $\DGlog$. Then we show that the quasi-Poisson structure on $\DG$ extends to a log-nondegenerate structure on $\DGlog$. Finally, in Section \ref{sec:part} we realize the relative compactification $\Zbar$ as a Steinberg slice in $\DGlog$, equipping it with a log-symplectic Poisson structure. We give an explicit description of its stratification by symplectic leaves.

\subsection*{Acknowledgements} 
The author would like to thank Maxence Mayrand and Eckhard Meinrenken for interesting discussions, and the anonymous referee for detailed comments which have substantially improved this paper. This work was partially supported by a National Science Foundation MSPRF under award DMS--1902921.\\

%
%
%
%
%
%
%
\section{Quasi-Hamiltonian and quasi-Poisson structures}
\label{sec:background}
We recall the basics of quasi-Hamiltonian and quasi-Poisson manifolds below, and we refer to \cite{ale.kos.mei:02} for more details. We then explain how to view quasi-Poisson manifolds as twisted Dirac manifolds, following \cite{bur.cra:05} and \cite{bur.cra:09}. We will use this formalism in Section \ref{sec:stein}.

\subsection{Quasi-Poisson manifolds}
Let $G$ be a simply-connected, semisimple complex group, let $\mathfrak{g}$ be its Lie algebra, and write $(\cdot,\cdot)$ for the Killing form. Under the isomorphism $\mathfrak{g}\cong\mathfrak{g}^*$ induced by this form, the \emph{Cartan $3$-tensor} $\varphi\in\wedge^3\mathfrak{g}$ is the dual of the invariant trilinear function $\eta\in\wedge^3\mathfrak{g}^*$ given by
\[\eta(x,y,z)=\frac{1}{12}(x,[y,z])\qquad\text{for all }x,y,z\in\mathfrak{g}.\]
Let $\{e_i\}$ be a basis of $\mathfrak{g}$ which is orthonormal relative to the Killing form. Then
\[\varphi=\frac{1}{12}C_{ijk}e_i\wedge e_j\wedge e_k\]
where $C_{ijk}=(e_i,[e_j,e_k])$ are the structure constants. Here and throughout the paper we adopt the convention of summing over repeated indices.

If $G$ acts on a complex manifold $M$, we write $\xi_M$ for the polyvector field induced by the infinitesimal action of an element $\xi\in\wedge^k\mathfrak{g}$. In particular, the Cartan $3$-tensor $\varphi$ generates a trivector field $\varphi_M\in\Gamma(\wedge^3TM)$. A \emph{quasi-Poisson} structure on the manifold $M$ is a $G$-invariant section $\pi\in \Gamma(\wedge^2TM)$ such that 
\begin{equation}
\label{qpcond}
[\pi,\pi]=\varphi_M,
\end{equation}
where the bracket on the left is the Schouten--Nijenhuis bracket. In the special case where $G$ is abelian, the Cartan $3$-tensor is trivial, and a quasi-Poisson structure on $M$ is simply a $G$-invariant Poisson structure.

\begin{example}{\cite[Section 3]{ale.kos.mei:02}}
\label{grp}
The group $G$, equipped with the conjugation action, has a natural quasi-Poisson bivector 
\[\pi_{G}\colonequals \frac{1}{2}e_i^R\wedge e_i^L.\]
Here $e_i^L$ and $e_i^R$ are the invariant vector fields on $G$ corresponding to left- and right-multiplication. The bivector $\pi_G$ is tangent to the conjugacy classes, and it induces a quasi-Poisson structure on each one.
\end{example}

If $(M_1,\pi_1)$ and $(M_2,\pi_2)$ are quasi-Poisson $G$-manifolds, a $G$-equivariant map $f\colon M_1\longrightarrow M_2$ is called \emph{quasi-Poisson} if the bivectors $\pi_1$ and $\pi_2$ are $f$-related. A quasi-Poisson manifold $(M,\pi)$ is \emph{Hamiltonian} if it has a $G$-equivariant group-valued moment map
\[\Phi\colon M\longrightarrow G\]
which satisfies a differential equation analogous to the usual moment map condition \cite[Definition 2.2]{ale.kos.mei:02}. In particular, $\Phi$ is a quasi-Poisson map when $G$ is equipped with the bivector $\pi_G$. In what follows all quasi-Poisson manifolds will be Hamiltonian, so we will suppress this adjective.

\begin{example}
\label{double}
\cite[Example 5.3]{ale.kos.mei:02} Consider the \emph{internal fusion double} $D(G)\colonequals G\times G$. The group $G\times G$ acts on $D(G)$ by 
\[(g,h)\cdot (u,v)=\left(guh^{-1},hvg^{-1}\right)\]
for $(g,h)\in G\times G$ and $(u,v)\in D(G)$. Let $\{e_i^1,e_i^2\}$ be the induced orthonormal basis for the Lie algebra $\mathfrak{g}\oplus\mathfrak{g}$. The manifold $D(G)$ has a quasi-Poisson structure
\[\frac{1}{2}\left(e_i^{1L}\wedge e_i^{2R}+e_i^{1R}\wedge e_i^{2L}\right).\]
The associated moment map is
\begin{align*}
&D(G)\longrightarrow G\times G \\
	&(u,v)\longmapsto (uv, u^{-1}v^{-1}).
	\end{align*}
\end{example}

In the subsequent sections we will often use the reparametrization of $D(G)$ given by setting $a=u$ and $b=vu$. This is analogous to the left-trivialization of the cotangent bundle $T^*G$. In these coordinates the $G\times G$-action is 
\begin{equation}
\label{paramact}
(g,h)\cdot (a,b)=\left(gah^{-1},hbh^{-1}\right).
\end{equation}
At the point $(a,b)$ the quasi-Poisson bivector becomes
\[\frac{1}{2}\left(e_i^{1L}\wedge e_i^{2R}+e_i^{2L}\wedge e_i^{2R}+e_i^{1R}\wedge (\Ad_{a^{-1}}e_i)^{2L}\right).\]
Using the fact that $\Ad_{a^{-1}}$ is an orthogonal transformation relative to the Killing form and summing once again over repeated indices, the last term simplifies to
\[e_i^{1R}\wedge (\Ad_{a^{-1}}e_i)^{2L}=(\Ad_{a^{-1}}e_i)^{1L}\wedge (\Ad_{a^{-1}}e_i)^{2L}=e_i^{1L}\wedge e_i^{2L}.\]
Therefore the quasi-Poisson structure in these coordinates is
\begin{align}
\label{firstbiv}
\pi\colonequals \frac{1}{2}\left(e_i^{1L}\wedge \left(e_i^{2L}+e_i^{2R}\right)+e_i^{2L}\wedge e_i^{2R}\right),
\end{align}
and the associated moment map is
\begin{align}
\label{moment}
\mu\colon &D(G)\longrightarrow G\times G \\
	&(a,b)\longmapsto (aba^{-1}, b^{-1}).\nonumber
	\end{align}

Quasi-Poisson structures are not compatible with restriction to the action of a subgroup---that is, a quasi-Poisson $G$-manifold is not in general quasi-Poisson for the action of a subgroup of $G$. An exception to this is the case of diagonal subgroups, for which there is a procedure called \emph{internal fusion} \cite[Section 5]{ale.kos.mei:02} which we now describe. 

Suppose that $(M,\pi)$ is a quasi-Poisson $G\times G\times H$-manifold with group-valued moment map
\begin{align*}
M&\longrightarrow G\times G\times H \\
	m&\longmapsto (\Phi_1(m),\Phi_2(m), \Psi(m)).
	\end{align*}
Define a $2$-tensor
\begin{equation}
\label{psi}
\psi\colonequals \frac{1}{2}e_i^1\wedge e_i^2\in\wedge^2(\mathfrak{g}\oplus\mathfrak{g}),
\end{equation}
and consider the modified bivector 
\[\pi_{\text{fus}}\colonequals \pi+\psi_M.\]
Let $\Phi_1\Phi_2$ denote the pointwise product of the components of $\Phi$. Then the triple
\[(M,\pi_{\text{fus}},(\Phi_1\Phi_2,\Psi))\]
is a quasi-Poisson $G\times H$-manifold relative to the diagonal action of $G$.

Fusion equips the category of quasi-Poisson $G$-manifolds with a monoidal structure. Given two quasi-Poisson $G$-manifolds $(M_1, \pi_1,\Phi_1)$ and $(M_2, \pi_2,\Phi_2)$, their direct product $M_1\times M_2$ is naturally a quasi-Poisson manifold for the action of $G\times G$. Fusing the two sides of the $G$-action, we obtain a new quasi-Poisson $G$-manifold denoted
\[M_1\circledast M_2,\]
with bivector $(\pi_1+\pi_2)_{\text{fus}}$ and moment map $\Phi_1\Phi_2.$\\
%

%
%
%
%
%
%
%
\subsection{Nondegenerate quasi-Poisson structures}
\label{sec:nondeg}
Let $(M,\pi, \Phi)$ be a quasi-Poisson $G$-manifold. The bivector $\pi$ induces a morphism of vector bundles
\begin{align*}
\pi^{\#}\colon T^*M&\longrightarrow TM \\
	\alpha&\longmapsto \pi(\alpha,-)
	\end{align*}
from the cotangent bundle $T^*M$ to the tangent bundle $TM$. The action of $G$ differentiates to an infinitesimal action map
\[\rho\colon M\times\mathfrak{g}\longrightarrow TM.\]
The quasi-Poisson manifold $M$ is called \emph{nondegenerate} if the bundle map
\begin{align}
\label{distribution}
\pi^{\#}\oplus\rho\colon T^*M\oplus\mathfrak{g}&\longrightarrow TM\\
		(\alpha,\xi)&\longmapsto \pi^\#(\alpha)+\rho(\xi)\nonumber
			\end{align}
is surjective. For example, the double $D(G)$ defined in Example \ref{double} is nondegenerate. 

Let $\theta^L$ and $\theta^R$ be the left- and right-invariant Maurer--Cartan forms on $G$. These are $\mathfrak{g}$-valued $1$-forms defined as follows: if $L_h, R_h$ are the differentials of left- and right-multiplication by the element $h\in G$, then for any $v\in T_hG$ 
\[\theta^L_h(v)= L_{h^{-1}}v \qquad\text{and}\qquad\theta^R_h(v)=R_{h^{-1}}v.\]
The bi-invariant $3$-form on $G$ induced by $\eta\in\wedge^3\mathfrak{g}^*$, which by abusing notation we denote by the same symbol, is
\begin{equation}
\label{3form}
\eta=\frac{1}{12}\left(\theta^L,[\theta^L,\theta^L]\right)=\frac{1}{12}\left(\theta^R,[\theta^R,\theta^R]\right)\in\Gamma(\wedge^3T^*G).
\end{equation}

Every nondegenerate quasi-Poisson manifold $(M,\pi,\Phi)$ carries a (possibly degenerate, non-closed) $2$-form $\omega$ which satifies the following properties:
\begin{align*}
&\text{(Q1)}\quad d\omega=-\Phi^*\eta; \\
&\text{(Q2)}\quad \iota_{\xi_M}\omega=\frac{1}{2}\Phi^*(\theta^L+\theta^R,\xi)\quad\text{for all }\xi\in\mathfrak{g}; \\
&\text{(Q3)}\quad \ker\omega_m=\left\{\xi_M(m)\mid \xi\in\mathfrak{g} \text{ such that } \Ad_{\Phi(m)}\xi=-\xi\right\}.
\end{align*}
This $2$-form gives $M$ the structure of a \emph{quasi-Hamiltonian $G$-space} in the sense of \cite{ale.mal.mei:98}. We write $\theta_i^L,\theta_i^R\in\Gamma(T^*G)$ for the components of $\theta^L$ and $\theta^R$ in the basis $\{e_i\}$. At every point these $1$-forms are a dual basis to the left- and right-invariant vector fields, so that 
\[\theta_i^L(e_j^L)=\theta_i^R(e_j^R)=\delta_{ij}.\]
Define $C\colon TM\longrightarrow TM$ to be the morphism of vector bundles 
\begin{equation}
\label{cee}
C\colonequals \text{Id}-\frac{1}{4}\Phi^*(\theta_i^L-\theta_i^R)\otimes e_{iM}.
\end{equation}
Then $\omega$ and $\pi$ satisfy the compatibility condition
\begin{equation}
\label{dualcond}
\pi^\#\circ\omega^\flat=C,
\end{equation}
where $\omega^\flat\colon TM\longrightarrow T^*M$ is the vector bundle map given by contraction with $\omega$. 

\begin{example}
The quasi-Hamiltonian $2$-form corresponding to the nondegenerate quasi-Poisson manifold $D(G)$ from Example \ref{double} is
\[\omega=-\frac{1}{2}\left(\theta_i^{1L}\wedge \theta_i^{2R}+\theta_i^{1R}\wedge \theta_i^{2L}\right).\]
\end{example}

\begin{remark}
If the action of $G$ is trivial, the quasi-Poisson manifold $M$ is nondegenerate if and only if $\pi^\#$ is an isomorphism---that is, if and only if $\pi$ is a nondegenerate Poisson structure. In this case $\omega$ is exactly the corresponding symplectic form.
\end{remark}

Even when $\pi$ is degenerate, the image of \eqref{distribution} is an integrable generalized distribution. Its integral submanifolds, which are $G$-stable, are called the \emph{nondegenerate leaves} of $M$, because $\pi$ gives each the structure of a nondegenerate quasi-Poisson manifold. In particular, each nondegenerate leaf $S$ is equipped with a quasi-Hamiltonian $2$-form $\omega_S$. 

\begin{example}
The nondegenerate leaves of the quasi-Poisson structure $(G,\pi_G)$ defined in Example \ref{grp} are the conjugacy classes. 
\end{example}

There is an analogue of Hamiltonian reduction for quasi-Poisson manifolds. Let $(M,\pi,\Phi)$ be a quasi-Poisson $G$-manifold and fix a conjugacy class $\mathcal{O}\subset G$. If the action of $G$ on $\Phi^{-1}(\mathcal{O})$ is locally-free, then the quotient 
\[M\!\sslash_{\mathcal{O}}\!G\colonequals \Phi^{-1}(\mathcal{O})/G\]
has a natural Poisson structure whose symplectic leaves are precisely the reductions of the nondegenerate leaves of $M$. When $\mathcal{O}=\{1\}$ is the identity element, we denote this quotient  simply by $M\!\sslash\!G$.\\

%
%
%
%
%
%
%
\subsection{Twisted Dirac structures}
Fix a closed $3$-form $\phi\in\Gamma(\wedge^3T^*M)$. A vector subbundle 
\[L\subset TM\oplus T^*M\]
is called a \emph{$\phi$-twisted Dirac structure} on $M$ if it satisfies the following two conditions:
\begin{itemize}
\item $L$ is Lagrangian with respect to the symmetric pairing on $\Gamma(TM\oplus T^*M)$ given by
\[\langle(X,\alpha),(Y,\beta)\rangle=\beta(X)+\alpha(Y);\]
\item $\Gamma(L)$ is closed under the \emph{$\phi$-twisted Courant bracket} on $\Gamma(TM\oplus T^*M)$ defined by
\[\llbracket (X,\alpha),(Y,\beta)\rrbracket_\phi=([X,Y], \mathcal{L}_X\beta-\iota_Yd\alpha+\iota_{X\wedge Y}\phi).\]
\end{itemize}
The projection of $L\subset TM\oplus T^*M$ onto the first summand is an integrable generalized distribution, and induces a foliation of $M$ by \emph{presymplectic leaves}. Each presymplectic leaf $S\subset M$ carries a (potentially degenerate, non-closed) $2$-form $\omega_S$ such that $d\omega_S=\phi_{\vert S}$. 
 
\begin{example}
\begin{thmenum}
\item A symplectic structure $\omega$ on a manifold $M$ corresponds to the $0$-twisted Dirac structure
\[L_\omega\colonequals \{(X,\omega^\flat(X))\mid X\in TM\}\subset TM\oplus T^*M\]
given by the graph of $\omega^\flat$. Conversely, a $0$-twisted Dirac structure $L\subset TM\oplus T^*M$ is induced by a symplectic form if and only if $L$ is transverse to both $TM$ and $T^*M,$ viewed as subbundles of $TM\oplus T^*M$.
\item Similarly, a Poisson structure $\pi$ on $M$ corresponds to the $0$-twisted Dirac structure 
\[L_\pi\colonequals \{(\pi^\#(\alpha),\alpha)\mid \alpha\in T^*M\}\subset TM\oplus T^*M\]
given by the graph of $\pi^\#$. Its projection onto the first coordinate is the distribution whose integral submanifolds are the symplectic leaves of $\pi$. Conversely, a $0$-twisted Dirac structure $L\subset TM\oplus T^*M$ is induced by a Poisson bivector if and only if $L$ is transverse to $TM$.

\item \label{exass} \cite[Theorem 3.16]{bur.cra:05} A Hamiltonian quasi-Poisson structure $\pi$ on a $G$-manifold $M$ corresponds to the $-\Phi^*\eta$-twisted Dirac structure 
\[L=\left\{\left(\pi^\#(\alpha)+\rho(\xi), \,C^*(\alpha)+\Phi^*\sigma(\xi)\right)\mid \alpha\in T^*M, \xi\in\mathfrak{g}\right\}\subset TM\oplus T^*M.\]
Here $C$ is as defined in \eqref{cee} and $\sigma$ is given by
\begin{align}
\label{sigma}
\sigma\colon \mathfrak{g}&\longrightarrow \Gamma(T^*G) \\
		\xi&\longmapsto\frac{1}{2}\left(\xi^L+\xi^R\right)^\vee,\nonumber
		\end{align}
where $v^\vee$ is the dual of the vector field $v\in \Gamma(TG)$ under the isomorphism $TG\cong T^*G$ induced by the Killing form. 

This Dirac structure has the property that $\ker\Phi_*\cap L=0$. The associated presymplectic foliation, given by projecting $L$ onto $TM$, is exactly the foliation of $M$ by quasi-Hamiltonian leaves described in Section \ref{sec:nondeg}. 
\end{thmenum}
\end{example}

Let $(M, L_M)$ be a Dirac manifold and let $f\colon M\longrightarrow N$ be a smooth map to a manifold $N$. The \emph{pushforward} of the Dirac structure $L_M$, if it is well-defined, is the distribution
\[f_*L_M\colonequals \{(f_*X,\beta)\in TN\oplus T^*N\mid (X,f^*\beta)\in L_M\}.\]
If this distribution defines a smooth vector bundle on $N$, it is a Dirac structure. When $(M,L_M)$ and $(N,L_N)$ are Dirac manifolds, a map $f\colon M\longrightarrow N$ is \emph{forward-Dirac} if
\[L_N=f_*L_M.\]
This notion generalizes the pushforward of vector fields, and all Poisson and quasi-Poisson maps are forward-Dirac. In particular, if $(M,\pi,\Phi)$ is a quasi-Poisson $G$-manifold, then the group-valued moment map $\Phi$ is forward-Dirac when $M$ and $G$ are viewed as Dirac manifolds. Moreover, \cite[Theorem 3.16]{bur.cra:05} shows that every $\phi$-twisted Dirac manifold $(M,L)$ equipped with a forward-Dirac map $\Phi\colon M\longrightarrow G$ which satisfies
\begin{equation}
\label{strong}
\phi=-\Phi^*\eta\qquad\text{and}\qquad\ker\Phi_*\cap L=0
\end{equation}
is a quasi-Poisson manifold. (In \cite{bur.cra:09}, such a map is called \emph{strong} forward-Dirac.)

Conversely, let $(N, L_N)$ be a Dirac manifold and let $f\colon M\longrightarrow N$ be a smooth map from a manifold $M$. The \emph{pullback} of the Dirac structure $L_N$ is the Lagrangian distribution
\[f^*L_N\colonequals \{(X,f^*\beta)\in TM\oplus T^*M\mid (f_*X,\beta)\in L_N\}.\]
If this distribution defines a smooth vector bundle on $M$, it is a Dirac structure. When $(M,L_M)$ and $(N,L_N)$ are Dirac manifolds, a map $f\colon M\longrightarrow N$ is called \emph{backward-Dirac} if
\[L_M=f^*L_N.\]
This is a generalization of the pullback of differential forms---symplectomorphisms, for instance, are backward-Dirac. We give the following important example of a Dirac pullback, which we will use repeatedly in the next section.

\begin{lemma}
\label{ex:trans}
Suppose that $(N,L)$ is a $\phi$-twisted Dirac manifold. If $\imath\colon Z\hooklongrightarrow N$ is a submanifold which is transverse to the foliation of $N$ by presymplectic leaves, then
\[\imath^*L=\{(X,\imath^*\beta)\in TZ\oplus T^*Z\mid (\imath_*X,\beta)\in L\}\]
is a $\imath^*\phi$-twisted Dirac structure on $Z$. 
\end{lemma}
\begin{proof}
By \cite[Section 1.5.1]{bur:13}, the pullback $\imath^*L$ defines a Lagrangian distribution in $TZ\oplus T^*Z$. Let $p_T$ be the projection of $TN\oplus T^*N$ onto the first summand. The condition that $Z$ is transverse to the presymplectic foliation is equivalent to
\[p_T(L)_{\vert Z}+TZ=TM_{\vert Z}.\]
Then we have
\begin{align*}
L_{\vert Z}\cap TZ^\perp&=p_T(L)_{\vert Z}^\perp \cap TZ^\perp \\
			&= (p_T(L)_{\vert Z}+TZ)^\perp \\
			&=0,
\end{align*}
where the first equality follows from the fact that $L$ is Lagrangian. In particular $L_{\vert Z}\cap TZ^\perp$ has constant rank, so \cite[Proposition 1.10 and Example 1.11]{bur:13} implies that the Lagrangian distribution $\imath^*L$ is a smooth subbundle of $TZ\oplus T^*Z$.

The $\phi$-twisted Courant bracket on $TN\oplus T^*N$ restricts to a $\imath^*\phi$-twisted Courant bracket on $TZ\oplus T^*Z$. Since the Lagrangian distribution $\imath^*L$ is a smooth vector bundle, its space of sections is closed under this bracket and $\imath^*L$ is therefore a $\imath^*\phi$-twisted Dirac structure on $Z$.
\end{proof}

Note that any isomorphism which is forward-Dirac is also backward-Dirac, and vice-versa. To conclude we prove two simple ``push--pull'' lemmas which will be useful in the next section.

\begin{lemma}
\label{pushpull}
Let $A,B,C,D$ be manifolds which fit into the diagram
\begin{equation}
\label{ppull}
\begin{tikzcd}[row sep=large, column sep=large]
A	\arrow[r, "\psi"] \arrow[d, swap, "\rho"]	&B	\arrow[d, "\sigma"]\\
C	\arrow[r, "\tau"]					&D.
\end{tikzcd}
\end{equation}
Suppose that $B$ has a Dirac structure $L_B$ and that all Dirac pullbacks and pushforwards along the diagram are well-defined smooth vector bundles. If \eqref{ppull} is Cartesian, then
\[\rho_*\psi^*L_B=\tau^*\sigma_*L_B.\]
\end{lemma}
\begin{proof}
Computing, we obtain
\begin{align*}
\rho_*\psi^*L_B&=\{(\rho_*a,\gamma)\in TC\oplus T^*C\mid (a,\rho^*\gamma)\in\psi^*L_B\} \\
				&=\{(\rho_*a,\gamma)\in TC\oplus T^*C\mid \exists \beta\in T^*B\text{ such that }\psi^*\beta=\rho^*\gamma\text{ and }(\psi_*a,\beta)\in L_B\}
				\end{align*}
and 
\begin{align*}
\tau^*\sigma_*L_B&=\{(c,\tau^*\delta)\in TC\oplus T^*C\mid (\tau_*c,\delta)\in\sigma_*L_B\} \\
				&=\{(c,\tau^*\delta)\in TC\oplus T^*C\mid \exists b\in TB\text{ such that }\sigma_*b=\tau_*c\text{ and }(b,\sigma^*\delta)\in L_B\}
				\end{align*}
Suppose now that $(\rho_*a,\gamma)\in\rho_*\psi^*L_B$. Then there exists $\beta\in T^*B$ such that 
\[\psi^*\beta=\rho^*\gamma\quad\text{ and }\quad(\psi_*a,\beta)\in L_B.\]
Since the pullbacks of $\beta$ and $\gamma$ to $A$ agree, and since the diagram \eqref{ppull} is Cartesian, there exists a covector $\delta\in T^*D$ such that 
\[\sigma^*\delta=\beta\quad\text{ and }\quad\tau^*\delta=\gamma.\]
Let $b=\psi^*a$. Then $\sigma_*b=\tau_*\rho_*a$ and 
\[(b,\sigma^*\delta)=(\psi_*a,\beta)\in L_B.\]
Therefore $(\rho_*a,\gamma)\in \tau^*\sigma_*L_B,$ and we see that 
\[\rho_*\psi^*L_B\subset\tau^*\sigma_*L_B.\]
Since the two sides of this inclusion are vector bundles of the same rank over $C$, they are equal.
\end{proof}

\begin{lemma}
\label{facts}
Let $A,B,C$ be Dirac manifolds and let $\sigma\colon A\longrightarrow C$ be an isomorphism which factors as
\begin{equation*}
\begin{tikzcd}[row sep=large, column sep=large]
A	\arrow[r, hook, "\psi"] \arrow[rd, swap, "\sigma"]	&B	\arrow[d, twoheadrightarrow, "\rho"]\\
									&C,
\end{tikzcd}
\end{equation*}
where $\psi$ is backward-Dirac and $\rho$ is forward-Dirac. Then $\sigma$ is both backward- and forward-Dirac.
\end{lemma}
\begin{proof}
Write $L_A$, $L_B$, and $L_C$ for the Dirac structures on $A$, $B$, and $C$ respectively. Since $\psi$ is backward-Dirac and $\rho$ is forward-Dirac, we have
\[L_A=\psi^*L_B\quad\text{ and }\quad L_C=\rho_*L_B.\]
Since $\sigma$ is an isomorphism, it is sufficient to prove that it is forward-Dirac---that is, to show that $L_C=\sigma_*L_A.$

Suppose that $(\sigma_*a,\gamma)\in \sigma_*L_A$. Then
\begin{align*}
(a,\sigma^*\gamma)\in L_A&\quad\Rightarrow\quad (a,\psi^*\rho^*\gamma)\in L_A \\
				&\quad\Rightarrow\quad (\psi_*a,\rho^*\gamma)\in L_B\quad\text{since $L_A=\psi^*L_B$}\\
				&\quad\Rightarrow\quad (\rho_*\psi_*a,\gamma)\in L_C\quad\text{since $L_C=\rho_*L_B$}\\
				&\quad\Rightarrow\quad (\sigma_*a,\gamma)\in L_C.
				\end{align*}
Therefore $\sigma_*L_A\subset L_C$. Since $\sigma_*L_A$ and $L_C$ are vector bundles of the same rank over $C$, this inclusion implies equality. Therefore $\sigma$ is forward-Dirac. \qedhere\\
\end{proof}

%
%
%
%
%
%
%
\section{Steinberg slices}
\label{sec:stein}
In this section we show that any quasi-Poisson $G$-manifold $(M,\pi, \Phi)$ has a distinguished submanifold $M_\Sigma$ which intersects each nondegenerate leaf transversally and symplectically. This submanifold, which we call the \emph{Steinberg slice} of $M$, is the preimage of the Steinberg cross-section of $G$ under the moment map $\Phi$. It carries a Poisson structure whose symplectic leaves are its intersections with the nondegenerate leaves of $M$.

\subsection{Construction of $M_\Sigma$}
Let $W$ be the Weyl group of $G$ corresponding to a maximal torus $T$, and let $c\in W$ be a Coxeter element---that is, $c$ is the product of the simple reflections, which is unique up to conjugation. Write $\dot{c}\in N_G(T)$ for a fixed group representative of $c$.

Fix a pair of opposite Borel subgroups $B$ and $B^-$ containing $T$, and let $U$ and $U^-$ be their unipotent radicals. The \emph{Steinberg cross-section} of $G$, which was introduced in \cite{ste:65}, is the closed subvariety
\[\Sigma\colonequals U\dot{c}\cap\dot{c}\,U^-\subset G.\]
It is an affine space which consists entirely of regular elements. Its dimension is equal to the length of $c$ as an element of the Weyl group, which is the rank of $G$. 

Since $G$ is simply-connected, $\Sigma$ intersects every regular conjugacy class in $G$ exactly once and transversally \cite[Theorem 1.4]{ste:65}. If $\Xi\colon G\longrightarrow T/W$ is the quotient map induced by the Chevalley isomorphism $\mathbb{C}[G]^G\cong\mathbb{C}[T]^W$, then the composition
\begin{equation}
\label{phase}
\Sigma\hooklongrightarrow G\xlongrightarrow{\Xi} T/W
\end{equation}
is an isomorphism of affine varieties. 

\begin{lemma}
\label{vanish}
The pullback of the twisted Dirac structure $L_G$ to the cross-section $\Sigma$ is the zero Poisson structure.
\end{lemma}
\begin{proof}
Let $\jmath\colon \Sigma\longrightarrow G$ be the inclusion map. Since $\Sigma$ is transverse to the conjugacy classes of $G$, by Lemma \ref{ex:trans} the Dirac structure $L_G$ pulls back to a $\jmath^*\eta$-twisted Dirac structure 
\[L_\Sigma\colonequals \jmath^* L_G=\{(0,\jmath^*\alpha)\in T\Sigma\oplus T^*\Sigma\mid (0,\alpha)\in L_G\}\]
on $\Sigma$.

For any $h\in\Sigma$, the image of 
\[\theta^R_h= (R_{h^{-1}})_*\colon T_h\Sigma\longrightarrow \mathfrak{g}\]
is contained in $\mathfrak{b}=\text{Lie}(B)$, and $(\mathfrak{b},[\mathfrak{b},\mathfrak{b}])=0$. In view of \eqref{3form} we have $\jmath^*\eta=0$, so the Dirac structure $L_\Sigma$ is non-twisted. 

Moreover, since $L_\Sigma$ is a Lagrangian subbundle of $T\Sigma\oplus T^*\Sigma$, it follows that the map $\jmath^*$ has full rank and therefore
\[L_\Sigma=\{(0,\alpha)\in T\Sigma\oplus T^*\Sigma\mid \alpha\in T^*\Sigma\}\]
corresponds to the zero Poisson structure.
\end{proof}

\begin{theorem}
\label{slices}
Let $H$ and $K$ be complex groups. Suppose that $K$ is simply-connected and semisimple, and let $\Sigma$ be the Steinberg cross-section of $K$. Let $(M,\pi)$ be a quasi-Poisson $H\times K$-manifold with group-valued moment map
\begin{align*}
\Phi\colon M&\longrightarrow H\times K \\
	m&\longmapsto (\Phi_H(m), \Phi_K(m)).
	\end{align*}
\begin{enumerate}[label=\emph{(\alph*)}]
\item The preimage $M_{H,\Sigma}\colonequals \Phi^{-1}_K(\Sigma)$ is a smooth submanifold of $M$.
\item The pullback of the twisted Dirac structure $L_M$ to $M_{H,\Sigma}$ is quasi-Poisson for the action of $H$, with group-valued moment map $\Phi_{H\vert M_{H,\Sigma}}$. 
\item The nondegenerate leaves of $M_{H,\Sigma}$ are the connected components of $M_{H,\Sigma}\cap S$, where $S$ varies over all nondegenerate leaves of $M$; the quasi-Hamiltonian $2$-form on each connected component of $M_{H,\Sigma}\cap S$ is the restriction of the quasi-Hamiltonian form $\omega_S$.
\end{enumerate}
\end{theorem}

\begin{proof}
(a) Let $k\in \Sigma$ and $m\in\Phi_K^{-1}(k)$, and write $\mathcal{O}\subset K$ for the conjugacy class of $k$. Because $\Phi_K$ is $K$-equivariant,
\[T_k\mathcal{O}=\Phi_{K*}(T_m(K\cdot m)) \subset \Phi_{K*}(T_mM).\]
Therefore, since $\Sigma$ is transverse to $\mathcal{O}$, it is transverse to $\Phi_K$. It follows that $M_{H,\Sigma}=\Phi_K^{-1}(\Sigma)$ is a smooth submanifold of $M$.

(b) Let $\jmath\colon H\times\Sigma\hooklongrightarrow H\times K$ be the inclusion, and let $\eta=(\eta_H,\eta_K)$ be the canonical $3$-form on the product $H\times K$. Since $\Sigma$ is transverse to the conjugacy classes of $K$, 
\[T_mM_{H,\Sigma}+ T_m(K\cdot m)=\Phi_{K*}^{-1}(T_k\Sigma+T_k\mathcal{O})=T_mM.\]
It follows that $M_{H,\Sigma}$ is transverse to the $H\times K$-orbits on $M$, and therefore also to the presymplectic leaves of the Dirac structure $L_M$. 

Let $\imath\colon M_{H,\Sigma}\hooklongrightarrow M$ be the inclusion. By Example \ref{ex:trans}, transversality implies that the $-\Phi^*\eta$-twisted Dirac structure on $M$ pulls back to a $-\imath^*(\Phi^*\eta)$-twisted Dirac structure $L_{M_{H,\Sigma}}$ on $M_{H,\Sigma}$. The commutative diagram
\begin{equation}
\label{fiberdiag}
\begin{tikzcd}[row sep=large]
M_{H,\Sigma}	\arrow[r, hook, "\imath"] \arrow{d}	{\Phi}&M	\arrow{d}{\Phi}\\
H\times \Sigma			\arrow[r, hook, "\jmath"]			&H\times K
\end{tikzcd}
\end{equation}
shows that 
\[-\imath^*(\Phi^*\eta)=-\Phi^*(\jmath^*\eta)=-\Phi_{H*}\eta_H,\]
where the last equality follows since the restriction of $\eta_K$ to $\Sigma$ vanishes by Lemma \ref{vanish}. Therefore $L_{M_{H,\Sigma}}$ is a $-\Phi_{H*}\eta_H$-twisted Dirac structure on $M_{H,\Sigma}$.

To show that $L_{M_{H,\Sigma}}$ is quasi-Poisson for the action of $H$, by \eqref{strong} it is sufficient to show (i) that $\Phi_H$ is a forward-Dirac map and (ii) that it has the property
\[\ker\Phi_{H*}\cap L_{M_{H,\Sigma}}=0.\]

(i) Let $p_H\colon H\times K\longrightarrow H$ be the first projection, so that $\Phi_H=p_H\circ\Phi$. Since $p_H$ and $\Phi$ are forward-Dirac, we obtain
\begin{align*}
L_H&=p_{H*}L_{H\times\Sigma} \\
	&=p_{H*}(\jmath^*\Phi_*L_M) \\
	&=p_{H*}(\Phi_*\imath^*L_M) =\Phi_{H*}L_{M_{H,\Sigma}},
	\end{align*}
where the third equality follows from applying Lemma \ref{pushpull} to \eqref{fiberdiag}. Therefore $\Phi_H$ is forward-Dirac.

(ii) Let $p_K\colon H\times K\longrightarrow K$ be the second projection, so that 
\[L_\Sigma=p_{K*}L_{H\times\Sigma}.\]
Suppose that $(X,0)\in L_{M_{H,\Sigma}}$ is such that $\Phi_{H*}(X)=0$. In particular, it follows that $(X,0)\in L_M$. Since $L_\Sigma$ is the zero Poisson by Lemma \ref{vanish} and since $p_K$ and $\Phi$ are forward-Dirac, 
\[\Phi_{K*}(X)=p_{K*}\Phi_*(X)=0.\]
This implies that $\Phi_*(X)=0$. Therefore $X\in\ker\Phi_*\cap L_M$. Since $\Phi$ satisfies condition \eqref{strong}, it follows that $X=0$.

(c) This is immediate since we have shown that the quasi-Poisson structure on $M_{H,\Sigma}$ is the pullback of the Dirac structure $L_M$.
\end{proof}

When $H=1$ and $K=G$, Theorem \ref{slices} has the following corollary:

\begin{corollary}
\label{slice-cor}
Let $(M,\pi,\Phi)$ be a quasi-Poisson $G$-manifold. Then
\begin{enumerate}[label=\emph{(\alph*)}]
\item $M_\Sigma\colonequals \Phi^{-1}(\Sigma)$ is a smooth submanifold of $M$.
\item The pullback of the twisted Dirac structure $L_M$ to $M_\Sigma$ is Poisson.
\item The symplectic leaves of $M_\Sigma$ are the connected components of $M_\Sigma\cap S$, where $S$ varies over all nondegenerate leaves of $M$; the symplectic form on each connected component of $M_\Sigma\cap S$ is the restriction of the quasi-Hamiltonian $2$-form $\omega_S$.
\end{enumerate}
\end{corollary}

Our first example of a Steinberg slice is the group scheme of regular centralizers of $G$, whose symplectic structure is constructed in essentially the same way in \cite[Section 2]{fin.tsy:19}.

\begin{example}
\label{ex:double}
Consider the double $D(G)$ of Example \ref{double}. Recall that its moment map is
\begin{align*}
\mu\colon D(G)&\longrightarrow G\times G \\
	(a,b)&\longmapsto (aba^{-1}, b^{-1}),
	\end{align*}
with image
\begin{equation}
\label{image}
\im(\mu)=\left\{(g,h)\in G\times G\mid g \text{ is conjugate to } h^{-1}\right\}.
\end{equation}
Let
\[\Sigma_\Delta\colonequals \left\{(h,h^{-1})\mid h\in\Sigma\right\}\subset G\times G\]
be the antidiagonal embedding of the Steinberg cross-section $\Sigma$. Since two elements of $\Sigma$ are conjugate if and only if they are equal, we have
\[\mu^{-1}(\Sigma_\Delta)=\mu^{-1}(\Sigma\times \iota(\Sigma)),\]
where $\iota\colon G\longrightarrow G$ is the inversion. Since $\Sigma\times\iota(\Sigma)$ is a Steinberg cross-section in $G\times G$, it follows from Corollary \ref{slice-cor} that $\mu^{-1}(\Sigma_\Delta)$ is a smooth submanifold of $D(G)$ with an induced symplectic structure.

The fiber of $\mu$ above an antidiagonal point $(h,h^{-1})\in G\times G$ is the $G$-centralizer of $h$, and therefore
\[\mu^{-1}(\Sigma_\Delta)=\{(a,h)\in G\times\Sigma \mid aha^{-1}=h\}.\]
This space is the completion of the phase space of the open relativistic Toda lattice, and this symplectic structure is precisely the one described in \cite[Lemma 2.1]{fin.tsy:19}. \\
\end{example}

%
%
%
%
%
%
%
\subsection{Slices and the base affine space}
We may also take the preimage of the Steinberg cross-section through only one component of the moment map \eqref{moment}. This is the analogue of the one-sided Whittaker reduction of $T^*G$, which gives the twisted cotangent bundle of the base affine space $G/U$.

\begin{example}
By letting $M=D(G)$ and $H=K=G$ in Theorem \ref{slices}, we see that
\[D_\Sigma(G)\colonequals G\times\iota(\Sigma),\]
has a natural nondegenerate quasi-Poisson structure for the residual $G$-action 
\[g\cdot(a,h)=(ga,h),\qquad\text{for $g\in G$, $(a,h)\in D_\Sigma(G)$}.\]
The corresponding group-valued moment map is
\begin{align*}
D_\Sigma(G)&\longrightarrow G\\
	(a,h)&\longrightarrow aha^{-1}.
	\end{align*}
\end{example}

\begin{remark}
Consider the affine space $\Theta\colonequals U\dot{c}U$, which contains $\Sigma$. By \cite[Section 8.9]{ste:65}, the conjugation action gives an isomorphism 
\[U\times\Sigma\xlongrightarrow{\sim} \Theta.\]
(The proof in \emph{loc. cit.} is omitted, but more general versions of this statement are proved in \cite[Proposition 2.1]{sev:11} or \cite[Theorem 3.6]{lus.he:12}.) Then $D_\Sigma(G)$ becomes a bundle of affine spaces
\[D_\Sigma(G)\cong G\times_U\iota(\Theta)\longrightarrow G/U.\]
\end{remark}

The one-sided slice $D_\Sigma(G)$ is similar to the universal imploded cross-section of \cite{hur.jef.sja:06}, where the authors study real quasi-Hamiltonian manifolds under the action of compact Lie groups. Following this analogy, we will show that the Steinberg slice $M_\Sigma$ can always be obtained as a quasi-Poisson reduction of the fusion product $M\circledast D_\Sigma(G)$. We will need the following two preliminary lemmas.

\begin{lemma}
\label{aux1}
Let $(N,\pi,\Phi)$ be a nondegenerate quasi-Poisson $G$-manifold with quasi-Hamiltonian $2$-form $\omega$ and Dirac structure $L_N$. Suppose that 
\[g\colon M\longrightarrow N\]
is a map from a manifold $M$ which has the property that $g^*\omega=0$. Then $g^*L_N=TM\oplus 0$.
\end{lemma}
\begin{proof}
Let $(X,g^*\gamma)$ be any element of $g^*L_N$, so that $(g_*X,\gamma)\in L_N$. Then there is a $1$-form $\alpha\in T^*N$ and a Lie algebra element $\xi\in\mathfrak{g}$ such that 
\[g_*X=\pi^\#(\alpha)+\rho(\xi)\qquad\text{and}\qquad\gamma=C^*(\alpha)+\Phi^*\sigma(\xi),\]
where $\rho$ is the infinitesimal action map, $C^*$ is as defined in \eqref{cee}, and $\sigma$ is given by \eqref{sigma}.

The assumption that $g^*\omega=0$ implies that 
\begin{align*}
0&=g^*\omega^\flat(\pi^\#(\alpha)+\rho(\xi))\\
	&=g^*C^*(\alpha)+g^*\omega^\flat(\rho(\xi))\\
	&=g^*C^*(\alpha)+g^*\Phi^*(\sigma(\xi))\\
	&=g^*\gamma.
	\end{align*}
Here the second equality follows by taking the dual of \eqref{dualcond}, and the third equality follows from the quasi-Hamiltonian moment map condition (Q2). We conclude that $g^*L_N\subset TM\oplus 0$, and since these are vector bundles of the same rank they must therefore be equal.
\end{proof}

\begin{lemma}
\label{aux2}
Let $(M,\pi_M,\Phi_M)$ be a quasi-Poisson $G$-manifold with corresponding Dirac structure $L_M$, and let $(N,\pi_N,(\Phi_N,\Psi))$ be a quasi-Poisson $G\times H$-manifold with Dirac structure $L_N$. Suppose that $g\colon M\longrightarrow N$ is a map which satisfies
\begin{equation}
\label{assum}
g^*L_N=TM\oplus0\qquad\text{and}\qquad(\Phi_N\circ g)(m)=\Phi_M(m)^{-1}\text{ for all }m\in M.
\end{equation}
Then the embedding
\begin{align*}
f\colon M&\hooklongrightarrow M\circledast N\\
m&\longmapsto (m,g(m))
\end{align*}
is backward-Dirac.
\end{lemma}
\begin{proof}
Let $L_{M\circledast N}$ be the Dirac structure associated to $M\circledast N$. We will show that $L_M=f^*L_{M\circledast N}$, by showing that the right-hand side is contained in the left-hand side.

Write 
\begin{align*}
&\rho_M\colon M\times \mathfrak{g}\longrightarrow TM\\
&\rho_N\colon N\times \mathfrak{g}\longrightarrow TN\\
&\tau\colon N\times \mathfrak{h}\longrightarrow TN\\
&\rho\colon G\times\mathfrak{g}\longrightarrow TG
\end{align*}
for the infinitesimal action maps, and let
\[C^*_{M\circledast N}\colon T^*(M\circledast N)\longrightarrow T^*(M\circledast N)\]
be the map of vector bundles defined in \eqref{cee}. By Example \ref{exass}, the Dirac structure $f^*L_{M\circledast N}$ consists of all pairs $(X, f^*\gamma)\in TM\oplus T^*M$ such that
\[X+g_*X=\pi_M^\#(\alpha)+\pi^\#_N(\beta)+\psi_{M\times N}^\#(\alpha+\beta)+\rho_M(\xi)+\rho_N(\xi)+\tau(\chi)\]
and 
\[\gamma= C^*_{M\circledast N}(\alpha+\beta)+(\Phi_M\Phi_N)^*\sigma_G(\xi)+\Psi^*\sigma_H(\chi),\]
where $\alpha\in T^*M$, $\beta\in T^*N$, $\xi\in\mathfrak{g}$, and $\chi\in\mathfrak{h}$. Here $\sigma_G$ and $\sigma_H$ are defined as in \eqref{sigma}, and the bivector $\psi_{M\times N}$ is induced by \eqref{psi}. In particular, the second condition of \eqref{assum} implies that $f^*\circ(\Phi_M\Phi_N)^*=0$, and therefore that
\begin{equation}
\label{cancels}
f^*\gamma=\alpha+g^*\beta-\frac{1}{4}\beta(f_{iN})g^*\Psi^*(\zeta_i^L-\zeta_i^R)+g^*\Psi^*\sigma_H(\chi),
\end{equation}
where $\{f_i\}$ is an orthonormal basis of $\mathfrak{h}$ and $\{\zeta_i\}$ is the corresponding dual basis in $\mathfrak{h}^*$.

Summing once again over repeated indices, define 
\[\xi_\alpha\colonequals \frac{1}{2}\alpha(e_{iM})e_{i}\qquad\text{and}\qquad\xi_\beta\colonequals \frac{1}{2}\beta(e_{iN})e_i.\]
Then 
\[X=\pi_M^\#(\alpha)+\rho_M(\xi-\xi_\beta)\qquad\text{and}\qquad g_*X=\pi^\#_N(\beta)+\rho_N(\xi+\xi_\alpha)+\tau(\chi).\]
It then follows from the first condition of \eqref{assum} that
\[g^*(C^*_N(\beta)+\Phi^*_N\sigma_G\left(\xi+\xi_\alpha\right)+\Psi^*\sigma_H(\chi))=0.\]
Applying this to equation \eqref{cancels} gives
\begin{align*}
f^*\gamma&=\alpha+\frac{1}{4}\beta(e_{iN})g^*\Phi_N^*(\theta_i^L-\theta_i^R)-g^*\Phi_N^*\sigma_G(\xi+\xi_\alpha)\\
		&=\alpha+\frac{1}{4}\beta(e_{iN})\Phi_M^*(\theta_i^L-\theta_i^R)+\Phi_M^*\sigma_G(\xi+\xi_\alpha).
		\end{align*}
Here the last equality follows once again from the second condition of \eqref{assum}, which implies that 
\[g^*\Phi_N^*(\theta_i^L)=-\Phi_M^*(\theta_i^R)\qquad\text{and}\qquad g^*\Phi_N^*(\theta_i^R)=-\Phi_M^*(\theta_i^L).\]

To complete the proof, we will now show that the $1$-forms $\alpha$ and $\beta$ satisfy the symmetry relation
\begin{equation}
\label{symmetry}
\alpha(e_{iM})=-\beta(e_{iN})\qquad\text{for all basis elements }e_i\in\mathfrak{g}.
\end{equation}
Since the quasi-Poisson moment maps $\Phi_M$ and $\Phi_N$ are $G$-equivariant, 
\begin{align*}
R_{\Phi_M^{-1}}\Phi_{M*}\rho_M(\xi)&=R_{\Phi_M^{-1}}\rho(\xi)\\
							&=\xi-(\Ad_{\Phi_M}\xi)\\
							&=-L_{\Phi_M}\rho(\xi)=-L_{\Phi_M}\Phi_{N*}\rho_N(\xi).
							\end{align*}
We therefore obtain
\begin{align}
\label{abc}
0&=(\Phi_M\Phi_N)_*(X+g_*X)\\
	&=(R_{\Phi_N}\Phi_{M*}+L_{\Phi_M}\Phi_{N*})(X+g_*X) \nonumber\\
	&=R_{\Phi_M^{-1}}\Phi_{M*}\pi_M^\#(\alpha)+R_{\Phi_M^{-1}}\rho(-\xi_\beta)+L_{\Phi_M}\Phi_{N*}\pi_N^\#(\beta)+L_{\Phi_M}\rho(\xi_\alpha)\nonumber
	\end{align}
We unpack the last line term-by-term. Using the quasi-Poisson moment map condition \cite[Lemma 2.3]{ale.kos.mei:02}, 
\[R_{\Phi_M^{-1}}\Phi_{M*}\pi_M^\#(\alpha)(\theta_i)=-\alpha\circ \rho_M\circ\sigma^\vee(\theta_i^R)=-\frac{1}{2}\alpha((\Ad_{\Phi_M^{-1}}e_i)_M+e_{iM})\]
Summing over repeated indices, we conclude that 
\[R_{\Phi_M^{-1}}\Phi_{M*}\pi_M^\#(\alpha)=-\frac{1}{2}\alpha((\Ad_{\Phi_M^{-1}}e_i)_M)e_i-\frac{1}{2}\alpha(e_{iM})e_i=-(\Ad_{\Phi_M}\xi_\alpha+\xi_\alpha),\]
where the second equality follows from the fact that $\Ad_{\Phi_M}$ is an orthogonal operator. Similarly,
\[L_{\Phi_M}\Phi_{N*}\pi_N^\#(\beta)=-(\Ad_{\Phi_M}\xi_\beta+\xi_\beta).\]
Moreover,
\[L_{\Phi_M}\rho(\xi_\alpha)=\Ad_{\Phi_M}\xi_\alpha-\xi_\alpha\qquad\text{and}\qquad R_{\Phi_M^{-1}}\rho(-\xi_\beta)=\Ad_{\Phi_M}\xi_\beta-\xi_\beta.\]
Equation \eqref{abc} therefore becomes
\[\xi_\alpha=-\xi_\beta,\]
proving the claim \eqref{symmetry}.

Now we return to our computation of the pullback of $\gamma$, which in view of \eqref{symmetry} gives
\begin{align*}
f^*\gamma&=\alpha-\frac{1}{4}\alpha(e_{iM})\Phi_M^*(\theta_i^L-\theta_i^R)+\Phi_M^*\sigma_G(\xi-\xi_\beta)\\
		&=C^*_M(\alpha)+\Phi_M^*\sigma_G(\xi-\xi_\beta).
		\end{align*}
This shows that $(X,f^*\gamma)\in L_M$, proving the desired inclusion and implying that $f$ is backward-Dirac.
\end{proof}

Now let $(M,\pi,\Phi)$ be a quasi-Poisson $G$-manifold, and consider the embedding
\begin{align*}
f\colon M&\hooklongrightarrow M\circledast D(G) \\
m&\longmapsto (m,1,\Phi(m)^{-1}),
\end{align*}
where $M\circledast D(G)$ is given by fusing the $G$-action on $M$ with the first $G$-action on $D(G)$. In view of \cite[Remark 3.2]{ale.mal.mei:98}, the pullback of the quasi-Hamiltonian $2$-form on $D(G)$ to the submanifold $\{1\}\times G$ vanishes. By Lemma \ref{aux1}, $f$ satisfies the conditions of Lemma \ref{aux2} and is therefore a backward-Dirac map.

The restriction of $f$ to $M_\Sigma$ descends to a backward-Dirac embedding
\begin{equation}
\label{embs}
M_\Sigma\hooklongrightarrow M\circledast D_\Sigma(G).
\end{equation}
The moment map of the right-hand side is given by
\begin{align*}
J\colon M\circledast D_\Sigma(G)&\longrightarrow G \\
(m,a,x)&\longrightarrow \Phi(m)axa^{-1},
\end{align*}
and the image of \eqref{embs} is contained in the fiber $J^{-1}(1)$ above the identity.

\begin{proposition}
The embedding \eqref{embs} induces an isomorphism of Poisson manifolds
\[M_\Sigma\cong (M\circledast D_\Sigma(G))\!\sslash\!G.\]
\end{proposition}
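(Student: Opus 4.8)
The plan is to verify that the embedding in \eqref{embs} identifies $M_\Sigma$ with the reduced space $J^{-1}(1)/G$, and then to check that the Poisson structures agree. I would proceed in three stages.

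First, the set-theoretic identification. The fiber $J^{-1}(1)$ consists of triples $(m,a,x)\in M\times G\times\Sigma$ with $\Phi(m)=a x^{-1} a^{-1}$; that is, $\Phi(m)$ must lie in the conjugacy class of $x^{-1}$ for some $x\in\Sigma$. Since $\Sigma$ meets every regular conjugacy class exactly once, and every point of $\Sigma$ is regular, this says precisely that $\Phi(m)$ is regular, in which case $x$ is the unique element of $\iota(\Sigma)$ conjugate to $\Phi(m)$ — equivalently $m\in\Phi^{-1}(\text{regular elements})$, with $x=x(m)$ determined by $m$. Moreover, given $m$ and $x$, the set of $a$ with $axa^{-1}=\Phi(m)^{-1}$ is a torsor under the centralizer $G^{x}$. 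The residual $G$-action is $g\cdot(m,a,x)=(g\cdot m, g a, x)$, so $G$ acts freely transitively on the fiber over each fixed $(m,x)$ after quotienting — concretely, the map $(m,a,x)\mapsto a^{-1}\cdot m$ is $G$-invariant, identifies $J^{-1}(1)/G$ with $\{m'\in M\mid \Phi(m')\in\iota(\Sigma)\}=\Phi^{-1}(\iota(\Sigma))$, and this is exactly $M_\Sigma$ up to the identification $\Sigma\cong\iota(\Sigma)$ (or one works with $\iota(\Sigma)$ throughout, as in the one-sided construction). The composite of this identification with \eqref{embs} is the identity on $M_\Sigma$, so the embedding does land in $J^{-1}(1)$, its image is $G$-saturated, and the induced map to the quotient is a bijection; smoothness and freeness of the $G$-action on $J^{-1}(1)$ (which follows since $G$ acts freely on the $a$-coordinate) make it a diffeomorphism.

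Second, the Poisson comparison. Here the cleanest route is the Dirac-geometric one used throughout Section~\ref{sec:stein}. On one side, Theorem~\ref{slices} realizes $\pi_\Sigma$ as the pullback $\imath^*L_M$ of the $-\Phi^*\eta$-twisted Dirac structure along $M_\Sigma\hooklongrightarrow M$, which is $0$-twisted because $\eta|_\Sigma=0$. On the other side, quasi-Poisson reduction at $\mathcal{O}=\{1\}$ is itself a Dirac-reduction: the Dirac structure on $M\circledast D_\Sigma(G)$ restricts to $J^{-1}(1)$ (a coisotropic/clean-intersection argument, valid because $1$ is a regular value and the leaves meet $J^{-1}(1)$ cleanly) and then descends along the free quotient $J^{-1}(1)\to J^{-1}(1)/G$ to the reduced Poisson structure. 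So it suffices to show these two Dirac structures on $M_\Sigma$ coincide. I would do this by tracing both through the common submanifold: the embedding $M_\Sigma\hooklongrightarrow M\times D_\Sigma(G)$ factors the inclusion into $M$ (via the projection $M\times D_\Sigma(G)\to M$, which on $J^{-1}(1)$ descends to the quotient map composed with $a^{-1}\cdot(-)$), and one checks that the fusion bivector $\psi_{M\times D_\Sigma(G)}$ contributes nothing after restriction to $J^{-1}(1)$ and reduction, because along the image of \eqref{embs} the relevant infinitesimal generators $e_{iM}$ and $e_{i,D_\Sigma(G)}$ are related by the moment-map constraint in a way that cancels — this is the standard mechanism by which imploded/sliced reductions recover the naive restriction. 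Concretely, pulling the $D(G)$-computation \eqref{firstbiv}–\eqref{moment} back along $m\mapsto(1,\Phi(m)^{-1})$ and fusing reproduces exactly the twisted Courant data that Theorem~\ref{slices} assigns to $M_\Sigma$.

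The main obstacle is precisely this last compatibility check: showing that the fusion correction term $\psi_M$ and the shifting by the moment-map constraint at $J^{-1}(1)$ conspire to give the untwisted restriction on the nose, rather than some gauge-equivalent Dirac structure. I expect to handle it by working leafwise — on each nondegenerate leaf $S$ of $M$, the quasi-Hamiltonian $2$-form $\omega_S$ transports, under the reduction, to the quasi-Hamiltonian reduction of $\omega_S\circledast\omega'$ at level $1$, and a direct computation with \eqref{2form} and the defining properties (Q1)–(Q3) shows this reduced $2$-form equals $\omega_S|_{M_\Sigma\cap S}$; then part~(c) of Theorem~\ref{slices}, which already identifies the symplectic leaves of $\pi_\Sigma$ with exactly these, finishes the argument. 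The remaining points — that $1$ is a regular value of $J$ and that $G$ acts freely on $J^{-1}(1)$ — are immediate from the $G$-torsor structure in the $a$-variable established in the first stage.
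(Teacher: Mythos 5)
Your set-theoretic stage is correct and amounts to the same observation the paper makes, namely that the diagonal $G$-action on $M\circledast D_\Sigma(G)$ is free and each orbit in $J^{-1}(1)$ has a unique representative of the form $(m,1,\Phi(m)^{-1})$; your explicit retraction $(m,a,x)\mapsto a^{-1}\cdot m$ is a nice way to see it, and your remark about $\Sigma$ versus $\iota(\Sigma)$ is an accurate reading of a genuine notational looseness in the text.

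The Poisson comparison, however, has a gap that you yourself flag (``the main obstacle is precisely this last compatibility check''), and it is exactly the point where the paper's argument is both shorter and structurally different from what you propose. You set out to compare the two Dirac structures on $M_\Sigma$ directly --- by tracing the fusion correction $\psi$ and the moment-map constraint through the reduction and then, leafwise, computing that the reduced $2$-form equals $\omega_S|_{M_\Sigma\cap S}$. That program is plausible but is never carried out, and the claim that $\psi$ ``contributes nothing after restriction and reduction'' is asserted rather than proved; it would require checking that the generators $e_{iM}$ and $e_{i,D_\Sigma(G)}$ cancel against the constraint, and that the result matches $\imath^*L_M$ on the nose rather than up to a gauge transformation. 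The paper sidesteps all of this: it observes that the diffeomorphism $M_\Sigma\xrightarrow{\sim}J^{-1}(1)/G$ factors as the inclusion $M_\Sigma\hookrightarrow J^{-1}(1)$, which is \emph{backward}-Dirac (this is just functoriality of pullback together with Theorem~\ref{slices}), followed by the quotient $J^{-1}(1)\twoheadrightarrow J^{-1}(1)/G$, which is \emph{forward}-Dirac by \cite[Proposition 4.1]{bur.cra:09}. A diffeomorphism that factors as a backward-Dirac map followed by a forward-Dirac map is automatically backward-Dirac, hence Poisson. No leafwise $2$-form calculation and no explicit handling of $\psi$ is needed. So: same framework, but you are missing the one structural observation that closes the argument, and the route you propose as a substitute is not completed and would be considerably more laborious.
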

\begin{proof}
Since the diagonal action of $G$ on $M\circledast D_\Sigma(G)$ is free, each $G$-orbit in $J^{-1}(1)$ contains a unique element of the form $(m,1,\Phi(m)^{-1})$. Therefore the induced map
\begin{equation*}
M_\Sigma\longrightarrow J^{-1}(1)/G=(M\circledast D_\Sigma(G))\!\sslash\!G
\end{equation*}
is an isomorphism. We only need to check that it is Poisson. 

Since $J$ is a quasi-Poisson moment map, it satisfies the transversality condition \eqref{strong}. Therefore the inclusion 
\begin{equation}
\label{incl}
J^{-1}(1)\hooklongrightarrow M\circledast D_\Sigma(G)
\end{equation}
equips $J^{-1}(1)$ with a Dirac structure via pullback, and we get a diagram
\begin{equation*}
\begin{tikzcd}[row sep=large]
M_\Sigma	\arrow[r, hook] \arrow[rd, swap, "\sim"]		&J^{-1}(1)\arrow[d, twoheadrightarrow]\\
&J^{-1}(1)/G.
\end{tikzcd}
\end{equation*}
Here the horizontal arrow is backward-Dirac because its composition with \eqref{incl} is the backward-Dirac map \eqref{embs}. The vertical arrow is forward-Dirac by \cite[Theorem 4.11]{bur.cra:05} and, since the diagonal arrow is an isomorphism, Lemma \ref{facts} implies that it is forward-Dirac, and therefore Poisson.\qedhere\\
\end{proof}

%
%
%
%
%
%
%
\subsection{Log-nondegenerate quasi-Poisson structures}
\label{sec:lognon}
Once again let $(M,\pi,\Phi)$ be a quasi-Poisson $G$-manifold, and let $D\subset M$ be a $G$-stable divisor with simple normal crossings. The \emph{logarithmic tangent sheaf} is the sheaf of logarithmic vector fields on $M$---that is, vector fields which are tangent to the divisor $D$. Because $D$ has simple normal crossings, this sheaf is locally free. The associated vector bundle $T_DM$ is called the \emph{logarithmic tangent bundle} of $M$, and its dual $T^*_DM$ is the \emph{logarithmic cotangent bundle}.

Suppose for the rest of this section that the bivector field $\pi$ is logarithmic. Then it corresponds to a natural morphism of vector bundles
\[\pi^\#_D\colon T^*_DM\longrightarrow T_DM\]
from the log-cotangent bundle of $M$ to the log-tangent bundle. Similarly, any logarithmic $2$-form $\omega\in\Gamma(\wedge^2T^*_DM)$ corresponds to
\[\omega_D^\flat\colon T_DM\longrightarrow T_D^*M.\]
Since the action of $G$ on $M$ stabilizes the divisor $D$, there is also a logarithmic infinitesimal action map
\[\rho_D\colon M\times\mathfrak{g}\longrightarrow T_DM.\]

\begin{definition}
The quasi-Poisson $G$-manifold $M$ is \emph{log-nondegenerate} if the morphism of vector bundles
\begin{align}
\label{lognon}
\pi^\#_D\oplus\rho_D\colon T^*_DM\oplus\mathfrak{g}&\longrightarrow T_DM\\
				(\alpha,\xi)&\longmapsto \pi_D^\#(\alpha)+\rho_D(\xi)\nonumber
\end{align}
is surjective.
\end{definition}

\begin{remark}
The pullback of $T_DM$ to the open dense locus $M^\circ\colonequals M\backslash D$ is just the ordinary cotangent bundle $TM^\circ$; similarly, the pullback of $T^*_DM$ to $M^\circ$ is $T^*M^{\circ}$. Therefore, along $M^\circ$ the morphism $\eqref{lognon}$ agrees with the morphism of vector bundles \eqref{distribution}. In particular, if $M$ is log-nondegenerate then $M^\circ$ is its unique open dense nondegenerate leaf.
\end{remark}

Viewed as an automorphism of the tangent sheaf of $M$, the map defined in \eqref{cee} takes logarithmic vector fields to logarithmic vector fields. Therefore it defines a morphism of vector bundles
\begin{equation}
\label{cee2}
C_D\colon T_DM\longrightarrow T_DM.
\end{equation}
Using this and \cite[Theorem 10.3]{ale.kos.mei:02}, we give an equivalent condition for log-nondegeneracy.

\begin{proposition}
\label{lognoncrit}
The quasi-Poisson manifold $(M,\pi,\Phi)$ is log-nondegenerate if and only if there exists a logarithmic $2$-form $\omega\in \Gamma(\wedge^2T_D^*M)$ such that 
\begin{equation}
\label{equality}
\pi_D^\#\circ\omega_D^\flat=C_D.
\end{equation}
\end{proposition}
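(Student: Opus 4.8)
The plan is to reduce the logarithmic statement to the known non-logarithmic criterion from \cite[Theorem 10.3]{ale.kos.mei:02}, which asserts that a quasi-Poisson $G$-manifold is nondegenerate precisely when there is an ordinary $2$-form $\omega$ satisfying $\pi^\#\circ\omega^\flat=C$; the task is to show the logarithmic versions of all the objects involved fit together over the whole of $M$, not just over $M^\circ$. First I would prove the ``if'' direction: given a logarithmic $2$-form $\omega\in\Gamma(\wedge^2 T_D^*M)$ with $\pi_D^\#\circ\omega_D^\flat=C_D$, I want to conclude that $\pi_D^\#\oplus\rho_D$ is surjective. The key observation is that $C_D$ is an \emph{isomorphism} of vector bundles: from the defining formula \eqref{cee2}, $C_D=\mathrm{Id}-\Phi^*(\theta_i^L-\theta_i^R)\otimes e_{iM}$ is unipotent modulo the image of the infinitesimal action, so its inverse can be written explicitly as a finite geometric-type series (the correction term is nilpotent on each orbit) and preserves logarithmic vector fields. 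Since $C_D=\pi_D^\#\circ\omega_D^\flat$ is then surjective, its image $\mathrm{im}(\pi_D^\#)$ already contains $\mathrm{im}(C_D)=T_DM$ up to the span of the action vector fields $e_{iM}$; more precisely, for any logarithmic $X$ we have $X=C_D(C_D^{-1}X)=\pi_D^\#(\omega_D^\flat C_D^{-1}X)+(\text{something in }\mathrm{im}\,\rho_D)$, which exhibits $X$ in the image of $\pi_D^\#\oplus\rho_D$. This gives log-nondegeneracy.

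For the ``only if'' direction, suppose $(M,\pi,\Phi)$ is log-nondegenerate. Over the open dense locus $M^\circ=M\backslash D$ the manifold is an ordinary nondegenerate quasi-Poisson manifold (by the remark preceding the proposition, \eqref{lognon} restricts to \eqref{distribution} there), so it carries its quasi-Hamiltonian $2$-form $\omega^\circ$ on $M^\circ$ satisfying (Q1)--(Q3) and $\pi^\#\circ(\omega^\circ)^\flat=C$. The heart of the argument is to show that $\omega^\circ$ extends to a \emph{logarithmic} $2$-form across $D$. I would do this by producing $\omega$ directly: on $M^\circ$, inverting the relation gives $(\omega^\circ)^\flat=(\pi^\#)^{-1}\circ C$ wherever $\pi^\#$ is invertible, but a cleaner route is to use that $\omega_D^\flat$ should equal $(\pi_D^\#\oplus\rho_D)$ composed with an appropriate section. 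Concretely, log-nondegeneracy gives a (not necessarily unique) splitting, but the $2$-form itself is pinned down by the identity $\omega_D^\flat=(\pi_D^\#)^{\dagger}\circ C_D$ in the appropriate sense; I expect the correct formulation is to define $\omega$ on $M^\circ$ by the quasi-Hamiltonian recipe and check that in local coordinates adapted to the normal crossings divisor $D=\{z_1\cdots z_k=0\}$, every coefficient of $\omega^\circ$, when written in the logarithmic coframe $\{dz_j/z_j,\, dz_j\}$, is regular. This regularity follows because $C_D$ and $\pi_D^\#$ are morphisms of the \emph{logarithmic} bundles and $C_D$ is invertible, so the formula $(\omega^\circ)^\flat = (\pi^\#|_{M^\circ})^{-1}\circ C$ —a priori only defined on the symplectic locus— must coincide with a globally defined logarithmic bundle map wherever both are defined, hence extends.

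The main obstacle is precisely this extension step: going from ``$\omega^\circ$ exists on $M^\circ$ and satisfies the algebraic identity there'' to ``$\omega^\circ$ is logarithmic near $D$''. The subtlety is that $\pi^\#$ itself may degenerate along $D$ (indeed it must, since the symplectic leaf drops dimension there), so one cannot simply invert it; one must use that the \emph{combined} map $\pi_D^\#\oplus\rho_D$ is surjective as a map of log-bundles and that $C_D$ is a log-bundle automorphism to control the poles. I would handle this by choosing, locally, a smooth right inverse $s:T_DM\to T_D^*M\oplus\mathfrak{g}$ of $\pi_D^\#\oplus\rho_D$, writing $s=(s_1,s_2)$, and then verifying that the $2$-form determined by $\alpha\mapsto$ (the $T_D^*M$-component of $C_D^{-1}$ applied through $s$) is independent of the choice of $s$ modulo $\ker\omega$ — using (Q3), which identifies $\ker\omega$ with the $(-1)$-eigenspace of $\mathrm{Ad}_{\Phi}$ acting through $\rho$, an object that makes sense over all of $M$ because $\Phi$ does. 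Once well-definedness over $M$ is established, $\omega$ is logarithmic by construction and satisfies \eqref{equality} because it does so on the dense open $M^\circ$ and both sides are continuous (indeed algebraic) sections. I would close by noting that \eqref{equality} then forces $\omega$ to be the unique such form, and that restricting to $M^\circ$ recovers the quasi-Hamiltonian $2$-form, consistent with the remark.
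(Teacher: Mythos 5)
Your ``if'' direction has a genuine error. You assert that $C_D$ is an isomorphism of vector bundles, reasoning that the correction term $\Phi^*(\theta_i^L-\theta_i^R)\otimes e_{iM}$ is ``nilpotent on each orbit.'' This is unjustified, and it is in fact false in general: by condition (Q3), $\ker\omega_m$ is the span of the vector fields $\xi_M(m)$ with $\mathrm{Ad}_{\Phi(m)}\xi=-\xi$, which is nonzero at points where $-1$ is an eigenvalue of $\mathrm{Ad}_{\Phi(m)}$; since $C=\pi^\#\circ\omega^\flat$, the kernel of $C$ contains $\ker\omega$ and $C$ fails to be injective there. Moreover, if $C_D$ \emph{were} an isomorphism and $C_D=\pi_D^\#\circ\omega_D^\flat$, you would have shown $\pi_D^\#$ surjective all by itself, which is strictly stronger than the desired surjectivity of $\pi_D^\#\oplus\rho_D$ and would indicate something has gone wrong. (Your last displayed identity reflects this tension: if $C_D^{-1}$ existed, the ``something in $\mathrm{im}\,\rho_D$'' in $X=\pi_D^\#(\omega_D^\flat C_D^{-1}X)+(\cdots)$ is forced to be zero.) The correct argument does not invert $C_D$ at all: for any $v\in T_DM$, the formula \eqref{cee} for $C$ gives directly $\pi_D^\#(\omega_D^\flat(v))=C_D(v)=v-\rho_D(\xi)$ for an explicit $\xi\in\mathfrak{g}$ (namely $\xi=\Phi^*(\theta_i^L-\theta_i^R)(v)\,e_i$), so $v=\pi_D^\#(\omega_D^\flat(v))+\rho_D(\xi)$ lies in $\mathrm{im}(\pi_D^\#\oplus\rho_D)$ immediately.

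For the ``only if'' direction your instinct is right that the content is an extension problem: $\omega^\circ$ exists on $M^\circ$ by \cite[Theorem 10.3]{ale.kos.mei:02}, and one must check that $\omega^{\circ\flat}$ extends to a morphism $T_DM\to T_D^*M$, after which \eqref{equality} holds by density. However, your route through local coordinates adapted to $D$, a choice of local right inverse $s$ of $\pi_D^\#\oplus\rho_D$, and a well-definedness check modulo $\ker\omega$ is more machinery than is needed and is left incomplete (the verification of independence of $s$ is only sketched). The paper instead defines $\omega_D^\flat$ intrinsically: on $\mathrm{im}(\pi_D^\#)$ by $\omega_D^\flat(\pi_D^\#(v)):=C_D^*(v)$ (dualizing $\pi^{\circ\#}\circ\omega^{\circ\flat}=C^\circ$), and on $\mathrm{im}(\rho_D)$ by the contraction formula (Q2), which makes sense on all of $M$ since it only involves $\Phi$. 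Log-nondegeneracy then says these two images span $T_DM$, so $\omega_D^\flat$ is determined as a logarithmic bundle map, and consistency of the two prescriptions on the overlap follows by continuity from $M^\circ$, where both are known to equal $\omega^{\circ\flat}$. This avoids local sections and coordinate estimates entirely.
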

\begin{proof} ($\Rightarrow$) First suppose that $\pi$ is log-nondegenerate. Then its restriction to $M^\circ$ is a nondegenerate quasi-Poisson bivector $\pi^\circ$. By \cite[Theorem 10.3]{ale.kos.mei:02} there is a $2$-form 
\[\omega^\circ\in\Gamma(\wedge^2T^*M^\circ)\]
which satisfies conditions (Q1), (Q2), and (Q3). If $C^\circ$ is the restriction of \eqref{cee2} to $M^\circ$, then
\[\pi^{\circ\#}\circ\omega^{\circ\flat}=C^\circ.\]
Taking duals, we also obtain
\begin{equation}
\label{comps}
\omega^{\circ\flat}\circ\pi^{\circ\#}=C^{\circ*}.
\end{equation}
If $\omega^\circ$ extends to a logarithmic $2$-form $\omega$ on $M$, then condition \eqref{equality} is automatically satisfied by continuity. Therefore it is enough to show that $\omega^{\circ\flat}\colon TM^\circ\longrightarrow T^*M^\circ$ extends to a morphism of vector bundles 
\[\omega_D^\flat\colon T_DM\longrightarrow T^*_DM.\]

First, define a morphism of vector bundles $\lambda_D\colon  T^*_DM\oplus\mathfrak{g}\longrightarrow T^*_DM$ by
\[\lambda_D(\alpha,e_i)=C^*_D(\alpha)+\frac{1}{2}\Phi^*(\theta_i^L+\theta_i^R),\]
and let $\lambda^\circ\colon T^*M^\circ\oplus\mathfrak{g}\longrightarrow T^*M^\circ$ be its restriction to $M^\circ$. In view of condition (Q2) and equation \eqref{comps}, this restriction descends to the morphism $\omega^{\circ\flat}$ through the commutative diagram
\begin{equation*}
\begin{tikzcd}[row sep=large, column sep=large]
T^*M^\circ\oplus\mathfrak{g}	\arrow[r, "\lambda^\circ"]\arrow[d, swap, twoheadrightarrow, "\pi^{\circ\#}\oplus\rho"]		&T^*M^\circ\\
TM^\circ.\arrow[ru, swap, "\omega^{\circ\flat}"]&
\end{tikzcd}
\end{equation*}

Second, for any local section $\alpha$ of $T_D^*M$ let
\[\omega_D^\flat(\pi_D^\#(\alpha))\colonequals C^*_D(\alpha).\]
This is well-defined: if $\pi_D^\#(\alpha)=0$, it follows that $\pi^{\circ\#}$ vanishes on the restriction $\alpha_{\vert M^\circ}$, so that $C^{\circ*}(\alpha_{\vert M^\circ})=0$ and therefore $C^*_D(\alpha)=0$. Therefore this defines $\omega_D^\flat$ on the image of $\pi^\#_D$. On the other hand, the condition (Q2) defines $\omega_D^\flat$ on the image of $\rho_D$. By the log-nondegeneracy assumption \eqref{lognon}, this determines $\omega_D^\flat$ entirely.

Since $\omega^{\circ\flat}$ is well-defined on the intersection of the images of $\pi^{\circ\#}$ and $\rho$, it follows that $\omega_D^\flat$ is also well-defined on 
\[\im(\pi_D^\#)\cap\im(\rho_D).\]
Therefore the map $\omega_D^\flat$ fits into the following commutative diagram:
\begin{equation*}
\begin{tikzcd}[row sep=large, column sep=large]
T_D^*M\oplus\mathfrak{g}	\arrow[r, "\lambda_D"]\arrow[d, swap, twoheadrightarrow, "\pi_D^{\#}\oplus\rho_D"]		&T_D^*M\\
T_DM.\arrow[ru, swap, "\omega_D^{\flat}"]&
\end{tikzcd}
\end{equation*}
In particular, this implies that $\omega_D^\flat$ is a smooth morphism of vector bundles.

($\Leftarrow$) Conversely, suppose that there exists a logarithmic $2$-form $\omega$ on $M$ such that \eqref{equality} holds, and let $v\in T_DM$ be any logarithmic vector. Then, in view of \eqref{cee},
\[\pi_D^\#\circ\omega^\flat_D(v)=C_D(v)=v-\rho_D(\xi)\]
for some $\xi\in\mathfrak{g}$. It follows that
\[\pi_D^\#\left(\omega^\flat_D(v)\right)+\rho_D(\xi)=v,\]
and so $\pi^\#_D\oplus\rho_D$ is surjective. Therefore $\pi$ is log-nondegenerate.
\end{proof}

\begin{remark}
Together with \cite[Theorem 10.3]{ale.kos.mei:02}, Proposition \ref{lognoncrit} implies that any log-nondegenerate quasi-Poisson manifold comes equipped with a unique logarithmic $2$-form which satisfies logarithmic versions of conditions (Q1), (Q2), (Q3), as well as the compatibility condition \eqref{equality}. 

In the special case that the action of $G$ is trivial, $(M,\pi)$ is log-nondegenerate if and only if $\pi_D^\#$ is an isomorphism---that is, if and only if $\pi$ is a log-symplectic Poisson structure. In this case $C_D$ is the identity morphism and the logarithmic $2$-form $\omega_D$ is is exactly the corresponding log-symplectic form.
\end{remark}

The following proposition shows that Steinberg slices in log-nondegenerate quasi-Poisson manifolds are log-symplectic.

\begin{proposition}
\label{prop:stein-lognon}
Suppose that $(M,\pi,\Phi)$ is log-nondegenerate. 
\begin{enumerate}[label=\emph{(\alph*)}]
\item $M_\Sigma\cap D$ is a simple normal crossing divisor in $M_\Sigma$.
\item The induced bivector $\pi_\Sigma$ is tangent to $M_\Sigma\cap D$.
\item $(M_\Sigma,\pi_\Sigma)$ is a log-symplectic Poisson manifold.
\end{enumerate}
\end{proposition}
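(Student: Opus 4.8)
The plan is to reduce everything to the transversality statement already used in the proof of Theorem \ref{slices}, and then to promote the ordinary Poisson statement to a logarithmic one by the same pullback-of-Dirac-structures argument, now carried out on the level of logarithmic tangent bundles. First I would establish (a). Since $M$ is log-nondegenerate, the divisor $D$ is a union of nondegenerate leaves (more precisely, by the remark following the definition, $M^\circ = M\setminus D$ is the unique open dense nondegenerate leaf, and $D$ is a union of lower-dimensional nondegenerate leaves). In the proof of Theorem \ref{slices}(a) we showed that $M_\Sigma$ meets every nondegenerate leaf $S$ transversally; applying this to each stratum of $D$ shows $M_\Sigma$ is transverse to $D$. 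Transversality of a submanifold to a simple normal crossing divisor forces the intersection to again be a simple normal crossing divisor (one checks this in local analytic coordinates adapted to $D$), giving (a). I would also record here that, because $M_\Sigma$ is transverse to every nondegenerate leaf and hence to the foliation by presymplectic leaves of the Dirac structure $L_M$, Example \ref{ex:trans} applies verbatim.

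Next, (b). The bivector $\pi_\Sigma$ was constructed in Theorem \ref{slices}(b) as $\imath^* L_M$, the pullback Dirac structure, which is Poisson because $\imath^*(\Phi^*\eta)=0$. Since $\pi$ is logarithmic (tangent to $D$) and $\imath:M_\Sigma\hooklongrightarrow M$ is transverse to $D$, the pullback bivector $\pi_\Sigma$ is automatically tangent to $M_\Sigma\cap D$: a logarithmic bivector restricted along a submanifold transverse to the divisor is logarithmic with respect to the restricted divisor. Concretely, one can see this on $M^\circ$, where $\pi_\Sigma$ is just the Poisson structure of Theorem \ref{slices} on $M_\Sigma^\circ := M_\Sigma\cap M^\circ$, and then argue that $\pi_\Sigma^\#$ extends to a morphism $T^*_{M_\Sigma\cap D}M_\Sigma\to T_{M_\Sigma\cap D}M_\Sigma$ by continuity, using the log-nondegeneracy compatibility $\pi_D^\#\circ\omega_D^\flat = C_D$ from Proposition \ref{lognoncrit} together with the fact that $C_D$ preserves logarithmic vector fields. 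This gives (b).

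For (c), the cleanest route is to run the argument of Theorem \ref{slices} again but everywhere replacing the tangent/cotangent bundles by their logarithmic counterparts. By Proposition \ref{lognoncrit}, $M$ carries a logarithmic $2$-form $\omega$ with $\pi_D^\#\circ\omega_D^\flat = C_D$ and satisfying logarithmic versions of (Q1)--(Q3); in particular $d\omega = -\Phi^*\eta$. Restricting along $\imath$ and using, exactly as in the proof of Theorem \ref{slices}(b), that $\jmath^*\eta = 0$ because $\theta^L_h(T_h\Sigma)\subset\mathfrak{b}$ and $(\mathfrak{b},[\mathfrak{b},\mathfrak{b}])=0$, we get $d(\imath^*\omega) = -\imath^*\Phi^*\eta = -\Phi^*\jmath^*\eta = 0$, so $\omega_\Sigma:=\imath^*\omega$ is a closed logarithmic $2$-form on $(M_\Sigma, M_\Sigma\cap D)$. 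It remains to check that $\omega_\Sigma$ is nondegenerate as a logarithmic form, i.e.\ that $(\omega_\Sigma)^\flat_{M_\Sigma\cap D}$ is an isomorphism, equivalently that $\pi_\Sigma$ is log-symplectic. On the open leaf $M_\Sigma^\circ$ this is Theorem \ref{slices}(c): the symplectic form there is the restriction of the quasi-Hamiltonian $2$-form $\omega_{M^\circ}=\omega|_{M^\circ}$, which is $\omega_\Sigma|_{M_\Sigma^\circ}$, so $\pi_\Sigma$ is symplectic off $M_\Sigma\cap D$. Since $\pi_\Sigma^\#$ is a morphism of log-tangent bundles of the same rank which is an isomorphism over the dense open set $M_\Sigma^\circ$, its degeneracy locus is either empty or a divisor; but a log-symplectic Poisson structure is exactly a logarithmic bivector whose $\pi_D^\#$ is an isomorphism, and the standard dimension/Pfaffian argument (the Pfaffian of $\omega_\Sigma$ cuts out $M_\Sigma\cap D$, so $\det \pi_\Sigma^\#$ is a nowhere-vanishing section of the relevant line bundle) shows $\pi_\Sigma^\#$ is an isomorphism everywhere. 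Hence $(M_\Sigma,\pi_\Sigma)$ is log-symplectic.

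The main obstacle is part (b) together with the nondegeneracy step in (c): one must make sure that the Poisson structure $\pi_\Sigma$, a priori only controlled on $M_\Sigma^\circ$, genuinely extends as a \emph{logarithmic} bivector across $M_\Sigma\cap D$ and is moreover log-symplectic there, rather than degenerating further along the divisor. The leverage for this is entirely in Proposition \ref{lognoncrit}: the identity $\pi_D^\#\circ\omega_D^\flat = C_D$ on $M$ restricts, via the transversality of $\imath$, to an analogous identity $(\pi_\Sigma)_D^\#\circ(\omega_\Sigma)_D^\flat = (C_\Sigma)_D$ on $M_\Sigma$, where $(C_\Sigma)_D$ is the restriction of $C_D$; since $\jmath^*\eta=0$ kills the only obstruction and the $G$-orbit directions tangent to $\Sigma$ lie in $\mathfrak{b}$, one checks that $(C_\Sigma)_D$ is in fact the identity morphism on $T_{M_\Sigma\cap D}M_\Sigma$, which forces both $(\pi_\Sigma)_D^\#$ and $(\omega_\Sigma)_D^\flat$ to be isomorphisms. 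I would present this last identification carefully, as it is where the special geometry of the Steinberg cross-section — not just transversality — enters.
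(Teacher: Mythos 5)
Your overall strategy — use the transversality from Theorem \ref{slices}, restrict the logarithmic $2$-form from Proposition \ref{lognoncrit}, and propagate nondegeneracy from $M_\Sigma^\circ$ by continuity — is essentially the paper's approach, and part (a) is correct (the paper is a little more explicit that one needs transversality to \emph{each partial intersection} $\bigcap_{i\in I}D_i$, but you do gesture at this with ``each stratum of $D$'').

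The real gap is in (b). You invoke a general principle — ``a logarithmic bivector restricted along a submanifold transverse to the divisor is logarithmic with respect to the restricted divisor'' — but this is not a restriction of a bivector in any naive sense ($\pi_\Sigma$ is the Dirac pullback), and the principle does need an argument. Your fallback (``argue that $\pi_\Sigma^\#$ extends by continuity'') is also off the mark: $\pi_\Sigma$ is already defined on all of $M_\Sigma$ by Theorem \ref{slices}; the issue is to show it is \emph{tangent to} $M_\Sigma\cap D$, not to extend it. The mechanism the paper uses is the explicit form of the quasi-Poisson Dirac structure from Example \ref{exs}(c): for $\alpha\in T^*_mM_\Sigma$, the vector $\imath_*\pi_\Sigma^\#(\alpha)$, pushed into $T_mM$, lies in the image of $\pi^\#\oplus\rho$, i.e.\ equals $\pi^\#(\gamma)+\rho(\xi)$ for some $\gamma,\xi$. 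Both summands are tangent to $D$ — the first because $\pi$ is logarithmic, the second because $D$ is $G$-stable — and since $\imath_*\pi_\Sigma^\#(\alpha)\in T_mM_\Sigma$ and $M_\Sigma$ is transverse to $D$, it must be tangent to $M_\Sigma\cap D$. This pointwise argument is what's missing from your proof.

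In (c), you end up in the right place but the reasoning wobbles. Your ``Pfaffian'' argument is circular: asserting that the Pfaffian of $\omega_\Sigma$ cuts out $M_\Sigma\cap D$ presupposes exactly the log-symplectic nondegeneracy you are trying to prove. Your second argument (that $(\pi_\Sigma)_D^\#\circ(\omega_\Sigma)_D^\flat$ is the identity) is the right one, but the justification should be pure continuity: once (b) gives that $\pi_\Sigma$ is logarithmic and the restriction $\omega_\Sigma$ of the logarithmic $\omega$ is logarithmic, the composite is a morphism of vector bundles $T_DM_\Sigma\to T_DM_\Sigma$, and it equals the identity on the dense open $M_\Sigma^\circ$ by Theorem \ref{slices}(c); hence it is the identity everywhere. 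The extra appeal to ``$G$-orbit directions lying in $\mathfrak b$'' and to $\jmath^*\eta=0$ is not what makes this work and muddies the argument — those facts were used to establish Theorem \ref{slices}, not to control $C_D$ on the boundary.
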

\begin{proof} 
(a) Let $D_1,\ldots, D_l$ be the smooth irreducible components of the simple normal crossing divisor $D$. Since the bivector $\pi$ is tangent to $D$ and since $D$ is $G$-stable, each partial intersection 
\[\qquad\bigcap_{i\in I}D_i,\qquad I\subset\{1,\ldots,l\}\]
is a union of nondegenerate leaves of $(M,\pi)$. Since $M_\Sigma$ is transverse to these nondegenerate leaves, it is transverse to every partial intersection of divisor components. It follows that $M_\Sigma\cap D$ is again a simple normal crossing divisor.

(b) Fix a point $m\in M_\Sigma$ and a covector $\alpha\in T^*_mM_\Sigma$, and let $\imath\colon M_\Sigma\longrightarrow M$ be the inclusion map. Write $L_{M_\Sigma}$ and $L_M$ for the twisted Dirac structures associated to $M_\Sigma$ and $M$. By Theorem \ref{slices},
\[L_{M_\Sigma}=\imath^*L_M.\]
Therefore, since $(\pi_\Sigma^\#(\alpha),\alpha)\in L_{M_\Sigma}$, there exists some $\beta\in T^*_mM$ such that
\[\left(\pi_\Sigma^\#(\alpha),\alpha\right)=\left(\pi_\Sigma^\#(\alpha),\imath^*\beta\right)\qquad\text{and}\qquad \left(\imath_*\pi_\Sigma^\#(\alpha),\beta\right)\in L_M.\]
Since $(M,\pi)$ is quasi-Poisson, Example \ref{exass} then implies that
\[\imath_*\pi_\Sigma^\#(\alpha)=\pi^\#(\gamma)+\rho(\xi)\]
for some $\gamma\in T_m^*M$ and $\xi\in\mathfrak{g}$. Since $\pi$ is logarithmic and $D$ is $G$-stable, both terms on the right-hand side are tangent to $D$. It follows that $\pi_\Sigma^\#(\alpha)$ is tangent to $M_\Sigma\cap D$, and therefore the bivector $\pi_\Sigma$ is logarithmic.

(c) Let $\omega$ be the logarithmic $2$-form on $M$ defined by Proposition \ref{lognoncrit}. Write $\omega_\Sigma$ for its restriction to $M_\Sigma$, and $\omega_\Sigma^\circ$ for its restriction to $M_\Sigma^\circ\colonequals M_\Sigma\cap M^\circ$. Since $(M^\circ,\pi^\circ)$ is nondegenerate and $M_\Sigma^\circ\subset M^\circ$ is a Steinberg slice, it follows from Theorem \ref{slices} that $\omega_\Sigma^\circ$ is a symplectic form. Therefore
\[\pi_{\Sigma}^{\circ\#}\circ\omega_{\Sigma}^{\circ\flat}\colon  TM_\Sigma^\circ\longrightarrow TM_\Sigma^\circ\]
is the identity map. 

There is a morphism of vector bundles
\[\pi_{\Sigma,D}^\#\circ\omega_{\Sigma,D}^\flat\colon  T_DM_\Sigma\longrightarrow T_DM_\Sigma.\]
For simplicity and since there is no risk of confusion, here we abuse notation to write $T_DM_\Sigma$ for the log-tangent bundle of $M_\Sigma$ relative to the normal crossing divisor $M_\Sigma\cap D$. This morphism agrees with the identity map along $M_\Sigma^\circ$. Therefore it agrees with the identity map everywhere, and $\pi_\Sigma$ is log-symplectic.\qedhere\\
\end{proof}

%
%
%
%
%
%
%
\section{The multiplicative universal centralizer and the wonderful compactification}
\label{sec:mid}
Let $Z_G$ be the center of the simply-connected, semisimple group $G$, and let $G_{\ad}\colonequals G/Z_G$ be its adjoint form. A finite quotient of Example \ref{ex:double} produces a smooth, symplectic family of centralizer subgroups of $G_{\ad}$ over $\Sigma$. In the next sections we will compactify the centralizer fibers of this family inside the wonderful compactification of $G_{\ad}$. First we recall the construction of this universal centralizer and of the wonderful compactification.

\subsection{The multiplicative universal centralizer}
The natural action of $G$ on itself by conjugation descends to an action of $G_{\ad}$ on $G$, for which we use the same notation. For every $h\in G$ we define the \emph{adjoint centralizer}
\[Z_{\ad}(h)\colonequals \{a\in G_{\ad}\mid aha^{-1}=h\}.\]
Note that $Z_{\ad}(h)=Z_{G}(h)/Z_{G}$, where $Z_{G}$ is the center of $G$, and we have the following simple lemma.

\begin{lemma}
\label{connex}
Suppose that $h\in G$ is a regular element. Then $Z_{\ad}(h)$ is connected.
\end{lemma}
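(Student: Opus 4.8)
The claim is that for a regular element $h \in G$ with $G$ simply-connected, the adjoint centralizer $Z_{\ad}(h) = Z_G(h)/Z_G$ is connected. The standard fact (due to Steinberg, and available in Humphreys' book on conjugacy classes) is that in a \emph{simply-connected} group $G$, the centralizer $Z_G(h)$ of any semisimple element is connected; more relevantly, a theorem of Steinberg states that for $h$ \emph{regular} in a connected reductive group (no simple-connectedness needed), the centralizer $Z_G(h)$ is abelian, and when $G$ is simply-connected it is connected. So the plan is to reduce to the simply-connected statement and then pass to the adjoint quotient.

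\textbf{Step 1: connectedness of $Z_G(h)$.} First I would invoke the theorem of Steinberg that in a simply-connected semisimple group $G$, the centralizer $Z_G(s)$ of any semisimple element $s$ is connected (see Steinberg, \emph{Endomorphisms of linear algebraic groups}, Theorem 8.1, or Humphreys, \emph{Conjugacy Classes in Semisimple Algebraic Groups}, \S2). A regular element $h$ is in particular semisimple-plus-unipotent with abelian centralizer; writing $h = su$ for its Jordan decomposition one has $Z_G(h) = Z_G(s) \cap Z_G(u)$, and since $h$ is regular $Z_G(s)$ is a connected reductive group containing $h$ as a regular element, with $Z_G(h) = Z_{Z_G(s)}(u)$. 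Using that $h$ regular forces $Z_G(h)^\circ$ to be a maximal torus when $h$ is regular semisimple, or more generally that $Z_G(h)$ is abelian of dimension equal to the rank, one concludes $Z_G(h)$ is connected. (For the purposes of this paper it may be cleanest to simply cite the regular-element case directly: Steinberg shows $Z_G(h)$ is connected for $h$ regular in simply-connected $G$.)

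\textbf{Step 2: descend to $G_{\ad}$.} Consider the isogeny $\varpi : G \to G_{\ad}$ with kernel $Z_G$. Since $Z_G \subset Z_G(h)$, we have $Z_{\ad}(h) = \varpi(Z_G(h)) \cong Z_G(h)/Z_G$ exactly as observed in the excerpt. Now $Z_G(h)$ is connected by Step 1, and the continuous surjective image of a connected group is connected, so $Z_{\ad}(h)$ is connected. One small point to verify: that $Z_{\ad}(h)$, defined as $\{a \in G_{\ad} : aha^{-1} = h\}$, really equals $\varpi(Z_G(h))$ rather than being a priori larger — but this is precisely the content of the identity $Z_{\ad}(h) = Z_G(h)/Z_G$ stated just before the lemma, which follows because any $a \in G_{\ad}$ fixing $h$ under conjugation lifts to some $\tilde a \in G$ with $\tilde a h \tilde a^{-1} = z h$ for $z \in Z_G$, and since $h$ and $zh$ have the same image in $G_{\ad}$... actually more carefully: $\tilde a h \tilde a^{-1}$ and $h$ both map to $aha^{-1}\varpi^{-1}$-fibers, so $\tilde a h \tilde a^{-1} = h$ already because conjugation in $G$ commutes with $\varpi$ and $\varpi(\tilde a h \tilde a^{-1}) = \varpi(h)$ forces $\tilde a h \tilde a^{-1} = zh$ for some $z \in Z_G$; then regularity (or a direct argument) pins down $z = 1$. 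In any case this set-theoretic identity is being taken as known.

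\textbf{Main obstacle.} There is essentially no obstacle here — the lemma is a direct citation of Steinberg's connectedness theorem for centralizers in simply-connected groups, combined with the trivial observation that quotients of connected groups are connected. The only thing requiring a moment's care is making sure the hypothesis "simply-connected" is genuinely used (it is: for $G = \mathrm{PGL}_2$ a regular semisimple element has disconnected centralizer, so the statement is false without it) and that "regular" is the right hypothesis to make $Z_G(h)$ connected — Steinberg's theorem gives connectedness of $Z_G(s)$ for $s$ semisimple in simply-connected $G$ unconditionally, and for general regular $h$ one reduces to this via the Jordan decomposition as in Step 1. I would keep the written proof to two or three sentences: cite the connectedness of $Z_G(h)$ in the simply-connected case, then note $Z_{\ad}(h) = Z_G(h)/Z_G$ is its image under the isogeny $G \to G_{\ad}$ and hence connected.
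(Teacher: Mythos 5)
Your Step~1 contains a genuine error: $Z_G(h)$ is \emph{not} in general connected for a regular element $h$ in a simply-connected group, and the suggested citation (``Steinberg shows $Z_G(h)$ is connected for $h$ regular in simply-connected $G$'') does not exist. Steinberg's connectedness theorem is for centralizers of \emph{semisimple} elements only. The simplest counterexample to your claim is $G = SL_2$ with $h$ a regular unipotent: then $Z_G(h) = \{\pm I\}\cdot U$, which has two connected components. More generally, for a regular unipotent $u$ in a simply-connected $G$, the component group of $Z_G(u)$ is exactly the full center $Z_G$. Your observation that $Z_G(h)$ is abelian of dimension equal to the rank is correct but does not force connectedness, as this example shows. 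With Step~1 false, Step~2's appeal to ``continuous image of a connected group is connected'' has nothing to start from.

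The paper's proof necessarily works harder, and the structure of the argument is precisely aimed at the failure above. Writing $h = us$ with $s$ semisimple and $L = Z_G(s)$, Steinberg's theorem is applied to $s$ (not to $h$) to conclude that $L$ is connected. Then $u$ is regular in $L$, and $Z_G(h) = Z_L(u) = Z_L \times Z_{U_L}(u)$, where $Z_L$ is the center of $L$, $U_L$ is the maximal unipotent subgroup of $L$ containing $u$, and $Z_{U_L}(u)$ is connected by a lemma of Springer. The possible disconnectedness of $Z_G(h)$ thus lives entirely in the factor $Z_L$. The crucial further observation, which your argument bypasses, is that $Z_G \subset Z_L$ and that $Z_L/Z_G = Z_{L_{\ad}}$ is the center of the adjoint-type group $L_{\ad} = L/Z_G$, which \emph{is} connected because $G_{\ad}$ is of adjoint type. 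Hence $Z_{\ad}(h) = Z_{L_{\ad}} \times Z_{U_L}(u)$ is connected. So the lemma is not a black-box connectedness citation followed by a quotient; one must identify where the component group of $Z_G(h)$ lives and verify that passing to $G_{\ad}$ kills exactly that part.
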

\begin{proof}
Let $h=us$ be the Jordan decomposition of $h$ into a unipotent part $u$ and a semisimple part $s$. Let $L=Z_{G}(s)$ be the centralizer of $s$ in $G$. Because $G$ is simply-connected, the reductive group $L$ is connected. 

Since $h$ is regular, the unipotent element $u$ is regular in $L$ and by \cite[Lemma 4.3]{spr:66} we have
\[Z_{G}(h)=Z_{L}(u)=Z_{L}\times Z_{U_L}(u).\]
Here $U_L$ is the unique maximal unipotent subgroup of $L$ which contains $u$, and the second factor $Z_{U_L}(u)$ is connected by \cite[Theorem 4.11]{spr:66}. 

Write $L_{\ad}\colonequals L/Z_{G}\subset G_{\ad}$ for the Levi subgroup of $G_{\ad}$ which is the image of $L$. Let $R$ be the weight lattice of $G_{\ad}$, and let $I$ be the simple roots which do not vanish on the center $Z_{L_{\ad}}$. Then the number of connected components of $Z_{L_{\ad}}$ is given by the torsion subgroup of the abelian group
\[R/\mathbb{Z}I.\]
Since $G_{\ad}$ is of adjoint type, $R$ is equal to the root lattice and this torsion subgroup is trivial. Therefore $Z_{L_{\ad}}$ is connected.

The center $Z_{G}$ is the kernel of the group homomorphism 
\begin{equation}
\label{groups}
Z_{L}\longrightarrow Z_{L_{\ad}}.
\end{equation}
Fix an element $\bar{g}\in Z_{L_{\ad}}$ and let $g\in L$ be a point in the preimage of $\bar{g}$. Then the set
\[S_g=\{glg^{-1}l^{-1}\mid l\in L\}\]
is contained in $Z_G$. Since $L$ is connected, the set $S_g$ is connected. Moreover, since $G$ is a semisimple group, the center $Z_G$ is discrete and therefore $S_g$ consists only of the identity element. This implies that $g\in Z_L$, so the group homomorphism \eqref{groups} is surjective. Therefore
\[Z_{\ad}(h)=Z_{G}(h)/Z_{G}\cong Z_{L_{\ad}}\times Z_{U_L}(u)\]
and, since $Z_{L_{\ad}}$ is connected, it follows that $Z_{\ad}(h)$ is also connected.
\end{proof}

\begin{definition}
The (\emph{multiplicative}) \emph{universal centralizer} associated to $G$ is the affine variety
\[\Z\colonequals \left\{(a,h)\in G_{\ad}\times \Sigma \mid a\in Z_{\ad}(h)\right\}.\]
\end{definition}

\begin{remark}
Consider the space of commuting pairs
\[\mathcal{C}\colonequals \{(a,h)\in G_{\ad}\times G^\text{r}\mid a\in Z_{\ad}(h)\}\]
in which the second element is regular. The group $G$ acts on $\mathcal{C}$ diagonally, and via \eqref{phase} there is an isomorphism
\[\Z\cong\mathcal{C}/G.\]
Here the right-hand side is the categorical quotient of $\mathcal{C}$ by the $G$-action, which is studied in \cite{bez.fin.mir:05}. In Proposition 2.8 and Theorem 2.15 of \emph{loc. cit.} it is shown that, when $G$ is simply-laced, its coordinate ring is isomorphic to the equivariant $K$-theory of the affine Grassmannian of the Langlands dual group $G^\vee$. 
\end{remark}

We will consider the \emph{double}
\[\DG\colonequals G_{\ad}\times G,\]
which is the quotient of the space $D(G)$ in Example \ref{double} by the action of the finite center $Z_{G}$ on the left. The $G\times G$-action \eqref{paramact}, the bivector $\pi$ \eqref{firstbiv}, and the moment map $\mu$ \eqref{moment} all descend to $\DG$. Keeping this notation, $(\DG, \pi,\mu)$ is a nondegenerate quasi-Poisson $G\times G$-variety. 

\begin{remark}
\label{broids}
We may view $\DG$ as a constant algebraic group scheme over $G_{\ad}$. On the other hand, letting $\mathfrak{g}$ be the Lie algebra of $G_{\ad}$ and using the Killing form to identify $\mathfrak{g}^*\cong\mathfrak{g}$, the cotangent bundle 
\[T^*G_{\ad}\cong G_{\ad}\times\mathfrak{g}\]
becomes a bundle of Lie algebras over $G_{\ad}$. The double 
\[\DG=G_{\ad}\times G\]
is then its simply-connected integration. 
\end{remark}

In view of Example \ref{ex:double}, the multiplicative universal centralizer
\[\Z=\mu^{-1}(\Sigma\times\iota(\Sigma))=\{(a,h)\in G_{\ad}\times\Sigma\mid aha^{-1}=h\}\]
sits inside $\DG$ as a symplectic Steinberg slice. In particular, as in \cite{fin.tsy:19}, through isomorphism \eqref{phase} $\Z$ is equipped with an integrable system given by the invariant generators of $\mathbb{C}[\Ts]^{W}$.

%
%
%
%
%
%
%
\subsection{The wonderful compactification}
Let $l$ be the rank of $G$. The \emph{wonderful compactification} $\Gbar$ is a canonical, smooth, $G\times G$-equivariant compactification of $G_{\ad}$ which was introduced by de Concini and Procesi \cite{dec.pro:83}. We recall some of its structure theory, following \cite{eve.jon:08}.  It is a smooth projective variety which contains $G_{\ad}$ as an open dense subset and on which $G$ acts by extensions of the left- and right-multiplication. The boundary
\[D\colonequals \Gbar\backslash G_{\ad}\]
is a simple normal crossing divisor with irreducible components $D_1,\ldots, D_l$. 

The $G\times G$ orbits on $\Gbar$ are in bijection with subsets of the simple roots in the sense that, for any $I\subset\{1,\ldots,l\}$, the closure of the orbit $\mathcal{O}_I$ is the corresponding partial intersection of divisor components
\[\overline{\mathcal{O}_I}=\bigcap_{i\not\in I}D_i.\]
In particular, the closure of each orbit is smooth. 

The subset $I\subset\{1,\ldots,l\}$ determines a ``positive'' parabolic subgroup $P_I$, generated by the ``positive'' Borel $B$ and the simple root spaces indexed by $I$. Write $P_I^-$ for the opposite parabolic and $L_I$ for their common Levi component. Let $U^\pm_I\subset P^\pm_I$ be the unipotent radicals, and denote by $\mathfrak{p}^\pm_I$, $\mathfrak{u}_I^\pm$, and $\mathfrak{l}_I$ the Lie algebras of these subgroups. Each orbit $\mathcal{O}_I$ has a distinguished basepoint
\[z_I\in\mathcal{O}_I\]
whose $G\times G$-stabilizer is
\begin{equation}
\label{evestab}
\Stab_{G\times G}(z_I)\colonequals \left\{(us,vt)\in P_I\times P_I^-\mid u\in U_I, v\in U_I^-, s,t\in L_I, st^{-1}\in Z_{L_I}\right\}.
\end{equation}
It follows that $\mathcal{O}_I$ is a fiber bundle over the product of partial flag varieties $G/P_I\times G/P_I^-$, with fiber isomorphic to the adjoint group $L_I/Z_{L_I}$. This extends to a smooth fibration
\begin{equation*}
\begin{tikzcd}[row sep=large]
\overline{L_I/Z_{L_I}}\arrow[hook, r]&\overline{\mathcal{O}_I}\arrow[d] \\
									&G/P_I\times G/P_I^-
\end{tikzcd}
\end{equation*}
whose fiber is the wonderful compactification of $L_I/Z_{L_I}$.

The wonderful compactification $\Gbar$ is \emph{log-homogeneous} in the sense of \cite{bri:09}---that is, the logarithmic infinitesimal action map
\[\text{act}_D\colon \Gbar\times\mathfrak{g}\times\mathfrak{g}\longrightarrow T_D\Gbar\]
is surjective. Let $\kappa$ be the Killing form on $\mathfrak{g}$. In the short exact sequence of vector bundles over $\Gbar$
\[ 0\longrightarrow\ker(\text{act}_D)\longrightarrow\Gbar\times\mathfrak{g}\times\mathfrak{g}\longrightarrow T_D\Gbar\longrightarrow0,\]
the kernel $\ker(\text{act}_D)$ is Lagrangian relative to the form $(\kappa,-\kappa)$ on the direct sum $\mathfrak{g}\times\mathfrak{g}$ \cite[Example 2.5]{bri:09}. It follows that 
\begin{equation}
\label{embedding}
\ker(\text{act}_D)\cong\Tlog.
\end{equation}
This identifies the log-cotangent bundle $\Tlog$ with a subbundle of the trivial bundle $\Gbar\times\mathfrak{g}\times\mathfrak{g}$, extending the embedding
\begin{align*}
T^*G_{\ad}\cong G_{\ad}\times\mathfrak{g} &\hooklongrightarrow G_{\ad}\times\mathfrak{g}\times\mathfrak{g} \\
	(a,x)		&\longrightarrow ({a, \Ad_a x, x}).
	\end{align*}
Under \eqref{embedding}, the fiber of the log-cotangent bundle at the orbit basepoint $z_I\in\mathcal{O}_I$ is
\[T^*_{D,z_I}\Gbar\cong\mathfrak{p}_I\times_{\mathfrak{l}_I}\mathfrak{p}^-_I.\]

\begin{remark}
Via \eqref{embedding}, the log-cotangent bundle $\Tlog$ is a bundle of Lie algebras over $\Gbar$. In analogy with Remark \ref{broids}, we will show in the next section that it integrates to a smooth subgroup scheme of the constant group scheme 
\[\Gbar\times G\times G\longrightarrow\Gbar.\]\\
\end{remark}

%
%
%
%
%
%
%
\section{The logarithmic double}
\label{sec:dlog}
In this section we recall the Vinberg monoid, and we use it to construct an enlargement of the double $\DG$ to a group scheme $\DGlog$ over the wonderful compactification $\Gbar$. The nondegenerate quasi-Poisson structure on $\DG$ will extend to a log-nondegenerate quasi-Poisson structure on $\DGlog$.

\subsection{Construction of $\DGlog$}
The \emph{Vinberg monoid} $V_G$, introduced in \cite{vin:95}, is a normal affine algebraic semigroup whose locus of invertible elements is the \emph{enhanced group}
\[\Genh\colonequals G\times_{Z_{G}} T.\]
There are natural projections
\begin{equation}
\label{twomaps}
\begin{tikzcd}
			&\Genh	\arrow{ld}\arrow{rd}		&\\
G_{\ad}			&											&T
\end{tikzcd}
\end{equation}
---the first is a principal $\Ts$-bundle, and the second is the abelianization of the group $\Genh.$ The second map extends to
\begin{equation*}
\alpha\colon V_G	\longrightarrow\Tbar,
\end{equation*}
where 
\[\Tbar=\spec\mathbb{C}[t^{\alpha_1},\ldots,t^{\alpha_l}]\cong\mathbb{C}^l.\]
Here $\alpha_1,\ldots,\alpha_l$ are the simple roots and $t^\lambda\in\mathbb{C}[T]$ is the function on $T$ given by the weight $\lambda$. The space $\Tbar$ is an abelian monoid into which the adjoint torus embeds as the group of units via the map
\[t	\longmapsto 			(\alpha_1(t),\ldots,\alpha_l(t)).\]
The morphism $\alpha$ is the abelianization of the monoid $V_G$.

The monoid $V_G$ carries an action of $G\times G\times\Ts$ that extends the natural action on the enhanced group, and $\alpha$ is $G\times G$-invariant. In particular, every fiber of $\alpha$ contains an open dense $G\times G$-orbit. The \emph{nondegenerate locus} $\Vino\subset V_G$ is the quasi-affine open dense subvariety whose intersection with each fiber of $\alpha$ is this maximal orbit. We obtain a diagram
\begin{equation*}
\begin{tikzcd}
			&\Vino	\arrow{ld}[swap]{\tauo}\arrow{rd}{\alphao}		&\\
\Gbar		&									&\Tbar,
\end{tikzcd}
\end{equation*}
whose pullback along the inclusion $\Genh\hooklongrightarrow \Vino$ is \eqref{twomaps}.

Now $\tauo$ and $\alphao$ are smooth morphisms, $\tauo$ is a principal $\Ts$-bundle, and the $G\times G$-stabilizer of any point $v\in\tauo^{-1}(z_I)$ is 
\begin{equation}
\label{eq:stabs}
\Stab_{G\times G}(v)=P_I\times_{L_I}P_I^-.
\end{equation}

Let $G\times G$ act on $\Gbar\times G\times G$ via 
\[(g,h)\cdot(a,x,y)=(gah^{-1}, gxg^{-1}, hyh^{-1})\]
for $(g,h)\in G\times G$ and $(a,x,y)\in\Gbar\times G\times G.$

\begin{proposition}
\label{logdouble}
There is a smooth, closed, $G\times G$-stable subgroup scheme $\DGlog\subset\Gbar\times G\times G$ whose fiber over the basepoint $z_I\in\Gbar$ is
\[P_I\times_{L_I}P_I^-.\]
\end{proposition}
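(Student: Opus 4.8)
The plan is to build $\DGlog$ as the ``group of units over the Vinberg nondegenerate locus,'' transported to $\Gbar$ via the principal $\Ts$-bundle $\tauo:\Vino\longrightarrow\Gbar$, and then to identify its fibers. First I would observe that the closed embedding $\Vino\hooklongrightarrow V_G$ into the Vinberg monoid, together with the fact that $V_G$ acts on itself, provides a natural family of stabilizer subgroups: for $v\in\Vino$ the stabilizer $\Stab_{G\times G}(v)$ is given by \eqref{eq:stabs}, and as $v$ ranges over $\tauo^{-1}(z_I)$ these form a $\Ts$-equivariant family of subgroups of $G\times G$. The key point is that $\tauo$ is a principal $\Ts$-bundle and the $G\times G$-action on $V_G$ commutes with the $\Ts$-action, so the assignment $v\mapsto\Stab_{G\times G}(v)\subset G\times G$ is $\Ts$-invariant and therefore descends to a subscheme $\DGlog$ of the constant group scheme $\Gbar\times G\times G\longrightarrow\Gbar$, whose fiber over $z_I$ is $P_I\times_{L_I}P_I^-$.

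To make this precise I would work with the ``universal stabilizer'' inside $\Vino\times G\times G$, namely $\widetilde{\DGlog}:=\{(v,x,y)\mid (x,y)\cdot v=v\}$, which is a closed subscheme because the action map is a morphism and we are taking a fiber product with the diagonal. This is manifestly $G\times G$-stable (for the action $(g,h)\cdot(v,x,y)=((g,h)\cdot v,\,gxg^{-1},\,hyh^{-1})$) and $\Ts$-stable; since $\tauo$ is a $\Ts$-torsor, $\widetilde{\DGlog}/\Ts$ exists and is a closed subscheme $\DGlog\subset(\Vino/\Ts)\times G\times G=\Gbar\times G\times G$, visibly $G\times G$-stable for the stated action. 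Over the open locus $\tauo^{-1}(G_{\ad})$ the stabilizer of a point mapping to $a\in G_{\ad}$ is $\{(x,y)\mid x a y^{-1}=a,\ x,y\ \text{conjugate via}\ a\}$, which recovers the constant group scheme structure restricting to $\DG$ under the left-trivialization of Remark \ref{broids}; thus $\DGlog$ genuinely enlarges $\DG$.

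Smoothness is the step I expect to be the main obstacle, since $\widetilde{\DGlog}$ is cut out by the action equations and a priori could have bad fibers. The clean way around this is to realize $\DGlog$ not as a naive vanishing locus but via the integration statement already available: by \eqref{embedding} the log-cotangent bundle $\Tlog$ is a bundle of Lie algebras over $\Gbar$, and its fiber at $z_I$ is $\mathfrak{p}_I\times_{\mathfrak{l}_I}\mathfrak{p}_I^-$, which is exactly the Lie algebra of $P_I\times_{L_I}P_I^-$. The Lie algebra bundle $\Tlog\subset\Gbar\times\mathfrak{g}\times\mathfrak{g}$ is a smooth vector bundle, and one checks that the fiberwise-constant group $P_I\times_{L_I}P_I^-$ has the expected dimension $\dim G_{\ad}=\dim\mathfrak{g}$ (equal to $\dim\DG$) for every $I$ — indeed $\dim(P_I\times_{L_I}P_I^-)=2\dim P_I-\dim L_I=\dim\mathfrak{g}$. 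Hence $\DGlog\longrightarrow\Gbar$ has equidimensional fibers of dimension $\dim\mathfrak{g}$, and $\widetilde{\DGlog}$ is flat over $\Vino$; since the fibers are smooth connected algebraic groups (each $P_I\times_{L_I}P_I^-$ is smooth and connected), $\widetilde{\DGlog}$ is smooth over $\Vino$, and then $\DGlog=\widetilde{\DGlog}/\Ts$ is smooth over $\Gbar$ because $\tauo$ is a smooth surjection. The subgroup-scheme structure (closedness under multiplication and inversion, identity section) is inherited fiberwise from the stabilizer description and extends over $\Gbar$ by the torsor quotient, completing the argument.
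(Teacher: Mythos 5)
Your construction is the same as the paper's: form the universal stabilizer $\{(v,x,y)\in\Vino\times G\times G\mid (x,y)\cdot v=v\}$, note that $\Stab_{G\times G}(v)$ is $\Ts$-invariant, and descend through the principal $\Ts$-bundle $\tauo:\Vino\to\Gbar$ to get a closed $G\times G$-stable subgroup scheme of $\Gbar\times G\times G$ with the right fibers. That part is fine, and the digression through $\Tlog$ to compute $\dim(P_I\times_{L_I}P_I^-)=\dim\mathfrak{g}$ is a correct but unnecessary detour (this follows directly from $\dim P_I+\dim P_I^--\dim L_I=\dim\mathfrak{g}$).

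The gap is in your smoothness argument. You assert that because the fibers of $\widetilde{\DGlog}\to\Vino$ are equidimensional of dimension $\dim\mathfrak{g}$, the map is flat, and then conclude smoothness from flat plus smooth fibers. But equidimensional fibers over a smooth base do not by themselves give flatness; miracle flatness requires the total space to be Cohen--Macaulay, and that is precisely what is not yet known about the stabilizer scheme $\widetilde{\DGlog}$ cut out by the action equations. (Each fiber is smooth in characteristic zero by Cartier, but that does not feed back into the total space.) The paper closes this gap differently: it observes that $\Vino\times_{\Tbar}\Vino$ is smooth because $\alphao$ is smooth, that the action morphism $\Vino\times G\times G\to\Vino\times_{\Tbar}\Vino$, $(v,g,h)\mapsto(v,gvh^{-1})$, is smooth and surjective because each fiber of $\alphao$ is a single $G\times G$-orbit, and that $\widetilde{\DGlog}$ is the preimage of the diagonal $\Vino\hookrightarrow\Vino\times_{\Tbar}\Vino$ under this smooth map, hence smooth. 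You would need to either supply that argument or invoke the smoothness of the stabilizer scheme directly from \cite[Appendix D]{dri.gai:15}, as the paper also does; the flatness claim as you stated it does not stand on its own.
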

\begin{proof}
Since $\alphao$ is smooth, the fiber product $\Vino\times_{\Tbar}\Vino$ is a smooth variety. The action morphism
\begin{align}
\label{vinmap}
\Vino\times G\times G 	&\longrightarrow 	\Vino\times_{\Tbar}\Vino \\
		(v,g,h) 		&\longrightarrow 	(v,gvh^{-1})\nonumber
		\end{align}
is smooth and surjective, because every fiber of $\alphao$ is a single $G\times G$-orbit. The preimage of the diagonal 
\[\Vino\hooklongrightarrow  \Vino\times_{\Tbar}\Vino\]
under \eqref{vinmap} is the smooth family of stabilizers
\[\mathcal{S}=\left\{(v,g,h)\in \Vino\times G\times G \mid (g,h)\in\Stab_{G\times G}(v)\right\},\]
defined for example in \cite[Appendix D]{dri.gai:15}. 

Because the action of $G\times G$ commutes with the action of $ T$, for any $v\in\Vino$ and $t\in T$ we have 
\[\Stab_{G\times G}(v)=\Stab_{G\times G}(t\cdot v).\]
Therefore the group scheme of stabilizers $\mathcal{S}$ descends through the principal $\Ts$-bundle $\tauo$ to a smooth, closed, $G\times G$-stable subvariety
\begin{equation*}
\DGlog\subset \Gbar\times G\times G.
\end{equation*}
By \eqref{eq:stabs}, the fiber of $\DGlog$ over $z_I\in\Gbar$ is $P_I\times_{L_I}P_I^-.$
\end{proof}

The group scheme $\DGlog$, which we call the \emph{logarithmic double}, integrates the bundle of Lie algebras given by the log-cotangent bundle
\[\Tlog\subset\Gbar\times\mathfrak{g}\times\mathfrak{g}.\]
Its fiber at the identity element $1\in G_{\ad}$ is the diagonal subgroup
\[\{(g,g)\mid g\in G\}\subset G\times G.\]
Since $\DGlog$ is $G\times G$ stable, it follows that its fiber at any point $a\in G_{\ad}$ is
\[\{(aga^{-1},g)\mid g\in G\}.\]
Therefore the logarithmic double $\DGlog$ is the closure of the image of the embedding
\begin{align}
\label{bottom}
\DG &\hooklongrightarrow \Gbar\times G\times G \\
(a,g) &\longmapsto (a, aga^{-1}, g). \nonumber
\end{align}
The diagram
\begin{equation*}
\begin{tikzcd}
\DG			\arrow[hookrightarrow]{r} \arrow{d}	&\DGlog	\arrow{d}\\
G_{\ad}				\arrow[hookrightarrow]{r}		&\Gbar,
\end{tikzcd}
\end{equation*}
is Cartesian, and $\DG$ is exactly the restriction of $\DGlog$ to the open dense copy of $G_{\ad}$ which sits inside $\Gbar$.
\vspace{.1in}

%
%
%
%
%
%
%
\subsection{The quasi-Poisson structure on $\DGlog$}
In view of the previous section, the nondegenerate quasi-Poisson variety $(\DG,\pi,\mu)$ sits inside the logarithmic double $\DGlog$ as an open dense subset. Its complement is a simple normal crossing divisor, and for simplicity we abuse notation to denote it by $D$. We will show that the quasi-Poisson bivector $\pi$ extends to a logarithmic bivector on $\DGlog$, and that this gives $\DGlog$ the structure of a log-nondegenerate quasi-Poisson manifold in the sense of Section \ref{sec:lognon}.

Using the notation of Section \ref{sec:background} and summing over repeated indices, define a bivector on the space $\Gbar\times G\times G$ by
\begin{equation}
\label{finbiv}
\pib=\frac{1}{2}\left(e_i^{1L}\wedge (e_i^{3L}+e_i^{3R})
	+e_i^{1R}\wedge (e_i^{2L}+e_i^{2R})+ e_i^{2R}\wedge e_i^{2L}+ e_i^{3L}\wedge e_i^{3R}\right).
	\end{equation}
Let the morphism $\mubar$, which extends the moment map $\mu\colon \DG\longrightarrow G\times G$ first defined in \eqref{moment}, be the composition
\begin{equation}
\label{barmoment}
\begin{tikzcd}[row sep=large]
 \DGlog\arrow[hook]{r}\arrow[rd, swap, "\mubar"]		& \Gbar\times G\times G \arrow{d} \\
									&G\times G,
\end{tikzcd}
\end{equation}
where the vertical arrow is
\begin{align*}
\Gbar\times G\times G	&\longrightarrow	G\times G \\
		(a,g,h)		&\longmapsto 		(g,h^{-1}).\qedhere
\end{align*}

\begin{proposition} 
\label{dgqpoisson}
The bivector $\pib$ is tangent to $\DGlog$, and $(\DGlog,\pib,\mubar)$ is a quasi-Poisson variety whose unique open dense nondegenerate leaf is $(\DG, \pi, \mu)$.
\end{proposition}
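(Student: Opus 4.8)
The plan is to work $G\times G$-equivariantly on the ambient constant group scheme $\Gbar\times G\times G$ and to check the three claims --- tangency of $\pib$, the quasi-Poisson identity \eqref{qpcond}, and the identification of the open dense nondegenerate leaf --- by reducing them to statements on the open dense stratum, where they are already known, together with a log-nondegeneracy argument that propagates across the boundary. First I would record that $\DG=G_{\ad}\times G$ sits inside $\DGlog$ as the preimage of $G_{\ad}\subset\Gbar$, and that the restriction of $\pib$ to this open locus, in the coordinates $(a,g,h)=(a,aga^{-1},g)$ of the embedding \eqref{bottom}, agrees with the bivector $\pi$ of \eqref{firstbiv}: the extra terms $e_i^{1R}\wedge(e_i^{3L}+e_i^{3R})+e_i^{3R}\wedge e_i^{3L}$ in $\pib$ are precisely what is needed so that $\pib$ is tangent to the graph-type subvariety cut out by the relation $x=aga^{-1}$, using that $\Ad_{a^{-1}}$ is orthogonal exactly as in the computation preceding \eqref{firstbiv}. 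Likewise $\mubar$ restricts to $\mu$ under this identification, and the map \eqref{barmoment} is manifestly $G\times G$-equivariant and intertwines the conjugation-type action on $\DGlog$ with the action on $G\times G$ appearing in Example \ref{double}.

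\emph{Tangency of $\pib$ to $\DGlog$.} Since $\DGlog$ is the closure of $\DG$ and $\pib$ is a regular (algebraic) bivector on the smooth ambient variety $\Gbar\times G\times G$, it suffices to show that $\pib$ is tangent to $\DGlog$ along the open dense locus $\DG$ --- which we have just checked --- and tangent along each boundary stratum. By $G\times G$-equivariance it is enough to verify this at the basepoints $z_I$: I would show that each summand of $\pib$, evaluated at a point of the fiber $P_I\times_{L_I}P_I^-$ over $z_I$, lies in the tangent space to $\DGlog$. Here the explicit description of $\Stab_{G\times G}(v)$ in \eqref{evestab}--\eqref{eq:stabs} and of $T^*_{D,z_I}\Gbar\cong\mathfrak{p}_I\times_{\mathfrak{l}_I}\mathfrak{p}_I^-$ does the work: the vector fields $e_i^{2L}, e_i^{2R}, e_i^{3L}, e_i^{3R}$ are the infinitesimal conjugation actions on the second and third factors, the vector fields $e_i^{1L}, e_i^{1R}$ generate (together with those) the $G\times G$-action on $\Gbar$, and since $\DGlog$ is $G\times G$-stable the action vector fields are automatically tangent; one then checks that the specific combinations occurring in $\pib$ stay inside $T\DGlog$ on the fiber directions by matching with the parabolic conditions. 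Alternatively, and more cleanly, I would exhibit $\pib$ as built from the $G\times G\times T$-invariant bivector on the Vinberg monoid $V_G$ descended through the principal $T$-bundle $\tauo$, which is tangent to the stabilizer family $\mathcal{S}$ by construction --- this avoids a stratum-by-stratum check.

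\emph{Quasi-Poisson identity and nondegeneracy.} The identity $[\pib,\pib]=\varphi_{\DGlog}$ (for the diagonal $G$, or rather $G\times G$, Cartan trivector of the acting group) is a closed algebraic condition: both sides are regular sections of $\wedge^3 T\DGlog$, and they agree on the dense open subset $\DG$ because $(\DG,\pi,\mu)$ is quasi-Poisson by the discussion following \eqref{moment} (it is the quotient of $D(G)$ by the finite central action). By continuity/Zariski density they agree everywhere, so $(\DGlog,\pib)$ is quasi-Poisson; that $\mubar$ is a moment map follows the same way, since the moment-map differential equation of \cite[Definition 2.2]{ale.kos.mei:02} is again a closed condition extending the one already verified on $\DG$. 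Finally, the open dense nondegenerate leaf is $\DG$: on $G_{\ad}\subset\Gbar$ the pullback of $T_D\DGlog$ is the ordinary tangent bundle, and $(\DG,\pi,\mu)$ is nondegenerate (Example \ref{double}), so the map \eqref{distribution} is surjective there; hence $\DG$ is contained in a single nondegenerate leaf, and since it is open and dense it \emph{is} the unique open dense one. I expect the genuine obstacle to be the tangency statement --- specifically, verifying that the cross-terms in \eqref{finbiv} do not push out of $\DGlog$ along the boundary divisors. The cleanest route around this is the Vinberg-monoid descent: $\DGlog$ is by definition the descent of the stabilizer family $\mathcal{S}\subset\Vino\times G\times G$ through $\tauo$, and one checks once that the $T$-invariant model bivector on $V_G$ restricts to $\mathcal{S}$, then transports everything down. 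Everything else is a density argument, which the log-nondegeneracy framework of Section \ref{sec:lognon} (via Proposition \ref{lognoncrit}) is designed to make routine once the log-$2$-form $\omegab$ extending $\omega$ from \eqref{2form} is produced --- and that extension, in turn, follows from the log-homogeneity of $\Gbar$ recorded in \eqref{embedding}.
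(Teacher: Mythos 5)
Your core computation is the paper's: you identify $\DG\hookrightarrow\Gbar\times G\times G$ via \eqref{bottom}, push $\pi$ forward from $D(G)$, simplify using the orthogonality of $\Ad$, and conclude $\pib|_{\DG}=\pi$. You also correctly observe that the quasi-Poisson identity $[\pib,\pib]=\varphi_{\DGlog}$ propagates to $\DGlog$ by Zariski density, and that $\DG$ being nondegenerate (as a central quotient of $D(G)$) makes it the unique open dense nondegenerate leaf.

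Where you diverge from the paper --- and introduce an unnecessary gap --- is the tangency claim. You write that one must \emph{also} check tangency of $\pib$ along each boundary stratum, propose either a basepoint-by-basepoint verification at the fibers $P_I\times_{L_I}P_I^-$ or a Vinberg-monoid descent to avoid it, and flag this as ``the genuine obstacle.'' But the tangency statement is handled by the very same density argument you invoke for $[\pib,\pib]=\varphi$. Since $\DGlog$ is a smooth, irreducible, closed subvariety of $\Gbar\times G\times G$ (it is the closure of $\DG$), and $\pib$ is a regular bivector on the ambient smooth variety, the condition ``$\pib$ is tangent to $\DGlog$'' is the vanishing of a morphism of vector bundles $N^*_{\DGlog}\longrightarrow N_{\DGlog}$ (conormal to normal), obtained by composing $\pib^\#|_{\DGlog}$ with the projection onto the normal bundle. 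Vanishing of a bundle morphism on a dense open subset of an irreducible variety implies vanishing everywhere. So once $\pib|_{\DG}=\pi$ is established, tangency to $\DGlog$ is automatic; no stratum-by-stratum check and no Vinberg descent are needed. This is precisely the paper's argument: ``It is enough to show that the restriction of $\pib$ to $\DG$ agrees with $\pi$. This will imply that $\pib$ is tangent to $\DGlog$, which is the closure of $\DG$.''

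A smaller point: your final appeal to Proposition \ref{lognoncrit} and the construction of the log-$2$-form $\omegab$ is out of place here. Those ingredients belong to Proposition \ref{prop:lognon} (log-nondegeneracy of $\DGlog$), which is a separate and later result. Proposition \ref{dgqpoisson} asserts only that $\DG$ is the unique open dense nondegenerate leaf, and this follows immediately from $\DG$ being a nondegenerate quasi-Poisson manifold in the sense of \eqref{distribution}; no logarithmic machinery is required at this stage.
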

\begin{proof}
It is enough to show that the restriction of $\pib$ to 
\[\DG\subset\Gbar\times G\times G\]
agrees with $\pi$. This will imply that $\pib$ is tangent to $\DGlog$, which is the closure of $\DG$. Moreover, since $\pi$ satisfies the quasi-Poisson condition \eqref{qpcond} along $\DG$, $\pib$ will satisfy \eqref{qpcond} along $\DGlog$.

Recall that the embedding of $\DG$ into $\Gbar\times G\times G$ fits into the commutative diagram
\begin{equation*}
\begin{tikzcd}
D(G)			\arrow[hookrightarrow]{r} \arrow{d}	&G\times G\times G	\arrow{d}\\
\DG				\arrow[hookrightarrow]{r}			&\Gbar\times G\times G,
\end{tikzcd}
\end{equation*}
where $D(G)$ is as defined in Example \ref{double}. The top horizontal arrow is
\begin{align*}
D(G)=G\times G&\hooklongrightarrow G\times G\times G \\
		(g,h)&\longmapsto (g, gh, hg).
		\end{align*}
The bottom horizontal arrow is \eqref{bottom}, and the vertical arrows are quotients by the left action of the center $Z_G$. Therefore, from Example \ref{double}, it is sufficient to check that the pushforward of 
\[\frac{1}{2}\left(e_i^{1L}\wedge e_i^{2R}+e_i^{1R}\wedge e_i^{2L}\right)\in\Gamma\left(\wedge^2TD(G)\right)\]
along the top arrow of this diagram agrees with \eqref{finbiv}.

At the point $(g, gh, hg)$ the vectors which constitute $\pi$ push forward to
\begin{align*}
&e_i^{1L}\longmapsto e_i^{1L}+ (\Ad_g e_i)^{2R}+ e_i^{3L} \\
&e_i^{1R}\longmapsto e_i^{1R}+ e_i^{2R}+(\Ad_{g^{-1}}e_i)^{3L}  \\
&e_i^{2L}\longmapsto e_i^{2L} + (\Ad_{g^{-1}}e_i)^{3L} \\
&e_i^{2R}\longmapsto(\Ad_ge_i)^{2R} +  e_i^{3R}.
\end{align*}
Therefore, at $(g, gh, hg)$ the bivector $\pi$ is half the expression
\begin{align}
\label{bigbiv}
e_i^{1L}\wedge (\Ad_g&e_i)^{2R}+ e_i^{1L}\wedge e_i^{3R}+ e_i^{3L}\wedge e_i^{3R}  \\
	&+ (\Ad_g e_i)^{2R}\wedge e_i^{3R}+ e_i^{3L} \wedge (\Ad_ge_i)^{2R} \nonumber\\
	&+e_i^{1R}\wedge e_i^{2L}+e_i^{1R}\wedge (\Ad_{g^{-1}}e_i)^{3L}+ e_i^{2R}\wedge e_i^{2L} \nonumber\\
	&+ e_i^{2R}\wedge (\Ad_{g^{-1}}e_i)^{3L}+ (\Ad_{g^{-1}}e_i)^{3L}\wedge e_i^{2L}.\nonumber
\end{align}

Since $\Ad_g$ and $\Ad_{g^{-1}}$ are orthogonal operators relative to the Killing form, and since we are summing over repeated indices, the first terms in the first and third lines simplify: 
\begin{align*}
&e_i^{1L}\wedge (\Ad_ge_i)^{2R}=(\Ad_ge_i)^{1R}\wedge (\Ad_ge_i)^{2R}=e_i^{1R}\wedge e_i^{2R};\\
&e_i^{1R}\wedge (\Ad_{g^{-1}}e_i)^{3L}=(\Ad_{g^{-1}}e_i)^{1L}\wedge (\Ad_{g^{-1}}e_i)^{3L}=e_i^{1L}\wedge e_i^{3L}.
\end{align*}
Moreover, applying orthogonality again, the terms in the last row become
\[e_i^{2R}\wedge (\Ad_{g^{-1}}e_i)^{3L} =(\Ad_ge_i)^{2R}\wedge e_i^{3L}\]
and
\[(\Ad_{g^{-1}}e_i)^{3L}\wedge e_i^{2L}=e_i^{3L}\wedge (\Ad_ge_i)^{2L}=e_i^{3R}\wedge (\Ad_ge_i)^{2R}.\]
Therefore the second and fourth lines of \eqref{bigbiv} sum to zero, and we see that \eqref{bigbiv} agrees exactly with \eqref{finbiv}.
\end{proof}

\begin{proposition}
\label{prop:lognon}
The quasi-Poisson variety $(\DGlog,\pib,\mubar)$ is log-nondegenerate.
\end{proposition}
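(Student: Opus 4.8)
By Proposition~\ref{lognoncrit}, it suffices to produce a logarithmic $2$-form $\omegab\in\Gamma(\wedge^2 T_D^*\DGlog)$ satisfying the compatibility condition $\pib_D^\#\circ\omegab_D^\flat=C_D$, where $C_D$ is the logarithmic extension of the morphism \eqref{cee} attached to the moment map $\mubar$. Since $\DG=\DGlog\setminus D$ is the open dense locus and $(\DG,\pi,\mu)$ is nondegenerate, it carries its quasi-Hamiltonian $2$-form $\omega$ from \eqref{2form}; by the remark following Proposition~\ref{lognoncrit} the desired logarithmic $2$-form, if it exists, must restrict to $\omega$ on $\DG$, and then the compatibility identity holds on all of $\DGlog$ by continuity. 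So the entire problem reduces to a single claim: \emph{the $2$-form $\omega=-\tfrac12(\theta_i^{1L}\wedge\theta_i^{2R}+\theta_i^{1R}\wedge\theta_i^{2L})$ on $\DG$ extends to a logarithmic $2$-form on $\DGlog$ relative to the boundary divisor $D$.}

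\textbf{How I would prove the extension claim.} I would pull back along the embedding \eqref{bottom} and work on $\Gbar\times G\times G$, trivializing $T_D(\Gbar\times G\times G)$ as $T_D\Gbar\oplus TG\oplus TG$. Using the log-homogeneity of $\Gbar$ (Brion~\cite{bri:09}): the map $\mathrm{act}_D\colon\Gbar\times\mathfrak g\times\mathfrak g\to T_D\Gbar$ is a surjection of vector bundles whose kernel is $\Tlog$, and by \eqref{embedding} this kernel is precisely the fiber of $\DGlog$. Concretely, a logarithmic vector at $(a,g,h)\in\DGlog$ is determined by its last two components $(X_2,X_3)\in T_gG\oplus T_hG$ together with the constraint that the ``$\Gbar$-component'' be the image under $\mathrm{act}_D$ of the corresponding pair in $\mathfrak g\oplus\mathfrak g$; this is exactly the content of $T_{(a,g,h)}\DGlog$ integrating $\Tlog$. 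On the open locus this recovers the left-trivialized description where a tangent vector to $\DG$ at $(a,g)$ is a triple $(\xi,\eta_2,\eta_3)\in\mathfrak g^{\oplus 3}$ with $\xi$ free and the $\Gbar$-direction equal to $\Ad_a(\cdot)-(\cdot)$ applied appropriately. The point is then to rewrite $\omega$ in terms of the intrinsic log-data: since $\Sigma$-type considerations do not enter here, one checks that the Maurer--Cartan $1$-forms $\theta^{2L},\theta^{2R},\theta^{3L},\theta^{3R}$ on the $G$-factors are globally defined and that the pairing $\omega$ only ever contracts them against each other (never against a direction that degenerates toward $D$), so the coefficients of $\omega$ in a local frame of $T_D^*\DGlog$ near a boundary point are regular. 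Equivalently: I would exhibit $\omegab$ directly via Proposition~\ref{lognoncrit}'s recipe, defining $\omegab_D^\flat(\pib_D^\#(v)):=C_D^*(v)$ on $\mathrm{im}\,\pib_D^\#$ and using the logarithmic analogue of condition (Q2), $\iota_{\xi_M}\omegab=\tfrac12\mubar^*(\theta^L+\theta^R,\xi)$, to pin it down on $\mathrm{im}\,\rho_D$; log-homogeneity guarantees these two images span $T_D\DGlog$, and one must check the two prescriptions agree on the overlap, which is automatic on the dense open set $\DG$ and hence everywhere.

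\textbf{Main obstacle.} The crux is showing that $\omega_D^\flat$ (or equivalently $\omegab$ itself) has no pole along $D$ — i.e.\ that the formula for $\omega$ valid on $\DG$ genuinely lands in $\wedge^2 T_D^*\DGlog$ and not merely in $\wedge^2 T^*\DGlog\otimes\mathcal O(D)^{\otimes k}$ for some $k>0$. This is where the geometry of the Vinberg monoid and the explicit stabilizer description \eqref{eq:stabs}, \eqref{evestab} must be used: near the basepoint $z_I$ the fiber of $\DGlog$ is $P_I\times_{L_I}P_I^-$, and one has to verify that contracting $\omega$ with the logarithmic vector fields tangent to this parabolic-type fiber produces regular — not merely meromorphic — functions. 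The efficient route is to reduce to the case $G=\mathrm{PGL}_2$ / the rank-one boundary stratum by the fibration $\overline{\mathcal O_I}\to G/P_I\times G/P_I^-$ with fiber $\overline{L_I/Z_{L_I}}$, where $D$ is smooth and the log-cotangent bundle is $\mathfrak p\times_{\mathfrak l}\mathfrak p^-$; there the extension is a short explicit computation with the standard affine chart $\overline{\mathbb C^\times}=\mathbb C$ of the $T^{ab}$-direction. Granting the extension, parts about nondegeneracy then follow formally: on $\DG$ the compatibility $\pi^\#\circ\omega^\flat=C$ holds since $(\DG,\pi,\mu)$ is nondegenerate, so by continuity $\pib_D^\#\circ\omegab_D^\flat=C_D$ holds on all of $\DGlog$, and Proposition~\ref{lognoncrit} gives log-nondegeneracy, with $\DG=\DGlog\setminus D$ as the unique open dense nondegenerate leaf.
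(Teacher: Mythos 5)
Your reformulation via Proposition~\ref{lognoncrit} is sound in principle: exhibiting a logarithmic $2$-form $\omegab$ on $\DGlog$ restricting to $\omega$ and satisfying $\pib_D^\#\circ\omegab_D^\flat=C_D$ would indeed give log-nondegeneracy. But the proposal never actually carries out the one substantive step it correctly identifies as ``the crux,'' namely the verification that $\omega$ extends without poles across $D$; that verification is deferred to ``a short explicit computation'' that is not performed. And the alternative ``efficient route'' offered in its place is circular. Defining $\omegab_D^\flat$ on $\mathrm{im}\,\pib_D^\#$ by $\omegab_D^\flat(\pib_D^\#(v)):=C_D^*(v)$ and on $\mathrm{im}\,\rho_D$ via the logarithmic (Q2) is exactly the construction in the \emph{forward} direction of Proposition~\ref{lognoncrit} --- the direction whose hypothesis is log-nondegeneracy. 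You then assert that ``log-homogeneity guarantees these two images span $T_D\DGlog$,'' but this is not what log-homogeneity says. Log-homogeneity of $\Gbar$ only asserts that $\mathrm{act}_D:\Gbar\times\mathfrak g\times\mathfrak g\to T_D\Gbar$ is surjective; equivalently, $\rho_D$ composed with the differential of the structure map $\DGlog\to\Gbar$ is surjective onto $T_D\Gbar$. It says nothing about the fiber directions of $\DGlog\to\Gbar$, which have dimension $\dim G$ and are not in $\mathrm{im}\,\rho_D$ at boundary points. The claim that $\pib_D^\#$ covers those fiber directions is precisely the nontrivial content of log-nondegeneracy here, and cannot be invoked to build the $\omegab$ that is supposed to establish it.

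The paper avoids this circle by checking the definition directly: working $G\times G$-equivariantly at the basepoint $(z_I,x,y)$, choosing a root-space basis, and computing $\pib_D^\#$ on explicit logarithmic $1$-forms $\theta_\alpha^{1L}$ (for $E_\alpha\in\mathfrak p_I^-$) and $\theta_\alpha^{1R}$ (for $E_\alpha\in\mathfrak p_I$), it produces a $\dim G$-dimensional image parallel to the fiber; combined with the $\dim G$-dimensional contribution from $\rho_D$ transverse to the fiber, the map $\pib_D^\#\oplus\rho_D$ is surjective. That explicit computation of $\pib_D^\#$ at $z_I$ is essentially unavoidable --- your route would require the same amount of root-space bookkeeping to verify the extension of $\omega$, and does not produce a shortcut.
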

\begin{proof}
It is clear from \eqref{finbiv} that $\pib$ is a logarithmic bivector, because the action of $G\times G$ on $\Gbar$ preserves the boundary divisor. We will check that $\pib$ satisfies condition \eqref{lognon}---that the morphism of vector bundles
\[\pib^\#_D\oplus\rho_D\colon T^*_D\DGlog\oplus\mathfrak{g}\oplus\mathfrak{g}\longrightarrow T_D\DGlog\]
is surjective. By $G\times G$-equivariance, is sufficient to check this at a point of the form $(z_I,x,y)\in \DGlog$. We begin by making a fixed choice of orthonormal basis.

Let $R_0$ be the set of weights of the $T$-action on $\mathfrak{g}$, with multiplicity and including $0$. Write $R^+$ for the subset of $R_0$ consisting of positive roots. Choose a basis of generalized eigenvectors
\[\mathcal{B}\colonequals \{E_\alpha\mid\alpha\in R_0\}\subset\mathfrak{g}.\]
By scaling $E_\alpha$ if necessary, we obtain an orthonormal basis
\[\{E_\alpha\mid \alpha=0\}\cup\{E_{\alpha}\pm E_{-\alpha}\mid \alpha\in R^+\}\]
of $\mathfrak{g}$ relative to the Killing form. In this basis, the bivector $\pib$ from \eqref{finbiv} becomes
\[\pib=E_\alpha^{1L}\wedge(E_\alpha^{3L}+E_\alpha^{3R})+E_\alpha^{1R}\wedge(E_\alpha^{2L}+E_\alpha^{2R})+E_\alpha^{2R}\wedge E_\alpha^{2L}+E_\alpha^{3L}\wedge E_\alpha^{3R},\]
where once again we sum over the repeated index $\alpha\in R_0$.

As in \eqref{embedding}, the infinitesimal action map
\[\mathfrak{g}\times\mathfrak{g}\longrightarrow T_{D,z_I}\Gbar\]
is surjective with kernel $\mathfrak{p}_I\times_{\mathfrak{l}_I}\mathfrak{p}_I^-$. In particular, the vector fields $\{E_\alpha^L, E_\alpha^R\mid\alpha\in R_0\}$ span the log-cotangent space of $\Gbar$ at every point. Therefore the image  of $\rho_D$ at $(z_I,x,y)$, which is spanned by the logarithmic vectors
\[\left\{E_\alpha^{1L}+E_\alpha^{3L}-E_\alpha^{3R},\, E_\alpha^{1R}+E_\alpha^{2L}-E_\alpha^{2R}\right\},\]
contains a subspace of dimension $\dim G$ which is not parallel to the fiber.

Let $\{\theta_\alpha\mid\alpha\in R_0\}$ be the basis of $\mathfrak{g}^*$ dual to $\mathcal{B}$. Since the logarithmic vector fields 
\[\left\{E_\alpha^{1L}\mid E_\alpha\in\mathfrak{p}_I^-\right\}\subset\Gamma(T^*_D\DGlog)\]
are linearly independent at $(z_I,x,y)\in\DGlog$, the corresponding $1$-forms 
\[\left\{\theta_\alpha^{1L}\mid E_\alpha\in\mathfrak{p}_I^-\right\}\subset \Gamma(T^*\DG)\]
extend to logarithmic $1$-forms in a neighborhood of $(z_I,x,y)\in\DGlog$. By the same argument, the same is true for 
\[\left\{\theta_\alpha^{1R}\mid E_\alpha\in\mathfrak{p}_I\right\}\subset \Gamma(T^*\DG).\]

Applying $\pib_D^\#$ to these logarithmic $1$-forms at $(z_I,x,y)\in\DGlog$, we obtain
\[\pib_D^\#(\theta_\alpha^{1L})=\begin{cases}
							E_\alpha^{3L}+E_\alpha^{3R},\quad\qquad\qquad\qquad\text{ if }E_\alpha\in\mathfrak{p}_I^-\backslash\mathfrak{l}_I \\
							E_\alpha^{2L}+E_\alpha^{2R}+E_\alpha^{3L}+E_\alpha^{3R}, \quad\text{ if }E_\alpha\in\mathfrak{l}_I
							\end{cases}\]
and
\[\pib_D^\#(\theta_\alpha^{1R})=\begin{cases}
							E_\alpha^{2L}+E_\alpha^{2R},\quad\qquad\qquad\qquad\text{ if }E_\alpha\in\mathfrak{p}_I\backslash\mathfrak{l}_I \\
							E_\alpha^{2L}+E_\alpha^{2R}+E_\alpha^{3L}+E_\alpha^{3R}, \quad\text{ if }E_\alpha\in\mathfrak{l}_I. 
							\end{cases}\]
This implies that the image of $\pib^\#_D$ contains a subspace of dimension $\dim G$ which is parallel to the fiber. It follows that, at the point $(z_I,x,y)$,
\[\dim\left(\im(\pib^\#_D\oplus\rho_D)\right)=2\dim G.\]
Therefore this morphism of vector bundles is surjective.\qedhere\\
\end{proof}

%
%
%
%
%
%
%
\section{The relative compactification of $\Z$}
\label{sec:part}
Consider the relative compactification
\begin{equation*}
\Zbar=\left\{(a,h)\in\Gbar\times\Sigma\mid a\in\overline{Z_{\ad}(h)}\right\}.
\end{equation*}
By realizing $\Zbar$ as a Steinberg slice in $\DGlog$, we will use the results of the previous sections to show that it is a smooth algebraic variety whose boundary is a simple normal crossing divisor, and that the symplectic structure on $\Z$ defined (up to a finite central quotient) in Example \ref{ex:double} extends to a log-symplectic structure on $\Zbar$. We will then describe the symplectic leaves of this structure.

\subsection{Construction of $\Zbar$}
We begin by characterizing the image and fibers of the compactified moment map $\mubar.$ In Section \ref{sec:stein} we defined the quotient map $\Xi\colon G\longrightarrow T/W$, whose fibers are the closures of the regular conjugacy classes. In view of diagram \eqref{barmoment}, the map $\mubar$ is proper, and we have the following description of its image.

\begin{lemma}
\label{prop:image}
The image of $\mubar$ is the closed subvariety
\[\immub\colonequals \left\{(g,h)\in G\times G\mid \Xi(g)=\Xi(h^{-1})\right\}\]
consisting of pairs of elements $(g,h)\in G\times G$ with the property that $g$ and $h^{-1}$ lie in the closure of the same conjugacy class.
\end{lemma}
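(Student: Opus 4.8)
The plan is to prove the two inclusions $\im(\mubar)\subseteq\immub$ and $\immub\subseteq\im(\mubar)$ separately, using properness of $\mubar$ to reduce the second inclusion to a density statement. For the first inclusion, note that by \eqref{barmoment} the map $\mubar$ factors through $\DGlog\hookrightarrow\Gbar\times G\times G$ followed by $(a,g,h)\mapsto(g,h^{-1})$. The composite $\Gbar\xrightarrow{\,} G\times G/W$ sending the boundary to orbit-invariants is continuous, so the function $(a,g,h)\mapsto(\Xi(g),\Xi(h))$ is constant on connected components of $\DGlog$; since on the open dense piece $\DG$ we have, for a point $(a,aga^{-1},g)$, that $g$ and $aga^{-1}$ are conjugate, hence $\Xi(g)=\Xi(aga^{-1})$, and $h=g^{-1}$ there so that $\Xi(g)=\Xi(h^{-1})$, this equality persists on all of $\DGlog$ by continuity and the irreducibility of $\DGlog$ (it is the closure of $\DG$). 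Thus every point of $\im(\mubar)$ lies in $\immub$.

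For the reverse inclusion $\immub\subseteq\im(\mubar)$, first observe that $\immub$ is irreducible: it is the fiber product $G\times_{T/W}G$ (pulling back along $\Xi\times(\Xi\circ\iota)$), and since $\Xi$ is a flat surjection with irreducible generic fiber onto the irreducible base $T/W$, the fiber product is irreducible of the expected dimension. Next, $\im(\mubar)$ is closed in $G\times G$ because $\mubar$ is proper (being the composite of the closed immersion $\DGlog\hookrightarrow\Gbar\times G\times G$, the projection off the \emph{complete} factor $\Gbar$ — which is proper — and then the isomorphism $h\mapsto h^{-1}$). By \eqref{image}, $\im(\mu)=\{(g,h)\mid g\sim h^{-1}\}$, the set of pairs with $g$ actually conjugate to $h^{-1}$; this is exactly the preimage under $\Xi\times(\Xi\circ\iota)$ of the regular-semisimple (or more precisely the regular) locus intersected appropriately, and in any case it is dense in $\immub$ because the regular conjugacy classes are dense in each fiber of $\Xi$ and $\Xi$ is open on the regular locus. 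Since $\im(\mu)\subseteq\im(\mubar)$ (as $\mu=\mubar|_{\DG}$) and $\im(\mubar)$ is closed, we get $\immub=\overline{\im(\mu)}\subseteq\im(\mubar)$, completing the equality.

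The main obstacle I anticipate is the density claim — that $\{(g,h)\mid g \text{ conjugate to } h^{-1}\}$ is dense in $\immub=\{(g,h)\mid\Xi(g)=\Xi(h^{-1})\}$. This is not formal: a fiber $\Xi^{-1}(c)$ is the closure of a single regular conjugacy class, but it also contains lower-dimensional non-regular classes, and a pair $(g,h)$ with $\Xi(g)=\Xi(h^{-1})=c$ need not have $g$ conjugate to $h^{-1}$ when $c$ is not regular-semisimple. The resolution is that the regular locus $G^{\mathrm{reg}}$ is open dense and $G$-conjugation-stable, the restriction $\Xi|_{G^{\mathrm{reg}}}$ is a smooth surjection whose fibers are single (regular) conjugacy classes, so $(G^{\mathrm{reg}}\times G^{\mathrm{reg}})\cap\immub$ is a smooth dense open subset of $\immub$ on which $g$ is automatically conjugate to $h^{-1}$ (both being the unique regular class over $c$). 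Hence this dense open subset lies in $\im(\mu)$, which suffices.

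As a sanity check on dimensions: $\dim\immub=2\dim G-l$ where $l=\mathrm{rank}\,G=\dim T/W$, and $\dim\DGlog=\dim\Gbar+\dim G=\dim G+\dim G=2\dim G$, with generic $\mubar$-fiber of dimension $l$ (the centralizer of a regular element has dimension $l$), consistent with $\mubar$ being dominant onto $\immub$.
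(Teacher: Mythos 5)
Your proof is correct and follows essentially the same strategy as the paper's: properness of $\mubar$ makes $\im(\mubar)$ closed, hence equal to $\overline{\im(\mu)}$, and density of the pairs of conjugate regular elements identifies this closure with $\immub$. The paper asserts the closure identification in one line; your regular-locus argument fills in the justification (the dense-open set you describe has the full dimension $2\dim G - l$, while its complement does not, because the irregular locus of $G$ has codimension at least three). One imprecision worth fixing: the sentence ``the function $(a,g,h)\mapsto(\Xi(g),\Xi(h))$ is constant on connected components of $\DGlog$'' is false as written, since that function certainly varies; what you want, and what the rest of your sentence actually uses, is that $\{(a,g,h)\in\Gbar\times G\times G \mid \Xi(g)=\Xi(h^{-1})\}$ is a closed subvariety containing the dense open $\DG$, hence containing all of $\DGlog$. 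Also, the irreducibility of $\immub$ via the fiber-product description is a tangent you do not actually need (and it is a bit delicate to prove from flatness alone); density of the regular locus is what carries the argument.
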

\begin{proof}
Since $\mubar$ is proper, its image is closed, so it is the closure of the image of $\mu$. As in \eqref{image}, the image of $\mu$ is the collection of pairs 
\[\left\{(g,h)\in G\times G\mid g\text{ is conjugate to }h^{-1}\right\}.\]
The closure of this set is precisely $\immub$.
\end{proof}

\begin{lemma}
\label{normality}
The variety $\immub$ is normal. 
\end{lemma}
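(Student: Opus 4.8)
The plan is to exhibit $\immub$ as a quotient of a smooth variety by a reductive group and then invoke standard results. Recall from Lemma \ref{prop:image} that
\[
\immub=\left\{(g,h)\in G\times G\mid \Xi(g)=\Xi(h^{-1})\right\},
\]
which is the fiber product $G\times_{T/W}G$ with respect to the map $\Xi$ on the first factor and $\Xi\circ\iota$ on the second. Equivalently, applying the inversion to the second factor, $\immub\cong G\times_{T/W}G$ with $\Xi$ on both factors. So it suffices to prove that $\widetilde{\immub}:=G\times_{T/W}G$ is normal.

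First I would set up the Grothendieck--Steinberg simultaneous resolution of the adjoint quotient. Let $\widetilde{G}:=\{(g,B')\in G\times G/B\mid g\in B'\}$ be the Grothendieck--Steinberg variety, which is smooth, with its map $\widetilde{G}\to G$ and its map $\widetilde{G}\to T$ obtained by recording the $T$-component of $g$ in $B'$; the latter is smooth with connected fibers, and the composite $\widetilde{G}\to T\to T/W$ agrees with $\widetilde{G}\to G\xlongrightarrow{\Xi}T/W$. The key input is that $G\to T/W$ is flat (it is the adjoint quotient, $\mathbb{C}[G]^G$ being a polynomial ring on which $\mathbb{C}[G]$ is flat — Steinberg) and that $\widetilde{G}\to G$ is a small, in particular birational and proper, resolution over the regular locus, so $\widetilde{G}\to T/W$ is flat with generically smooth fibers. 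Consequently the fiber product $\widetilde{G}\times_{T/W}\widetilde{G}$ is a complete intersection in $\widetilde{G}\times\widetilde{G}$ cut out by the $l$ equations equating the two $T/W$-coordinates, hence Cohen--Macaulay; and it is generically reduced because the generic fiber of $\widetilde{G}\to T/W$ is a single regular semisimple conjugacy class times $W/\!\sim$, which is smooth. By Serre's criterion ($R_0+S_1$ for reduced, then $R_1+S_2$), once I check the singular locus has codimension $\ge 2$ in $\widetilde{G}\times_{T/W}\widetilde{G}$, this fiber product is normal. The codimension estimate is the step I expect to be the main obstacle: it amounts to controlling where the two maps $\widetilde{G}\to T$ fail to be smooth simultaneously, equivalently where a pair of Borels can both contain a non-regular element with matching $T/W$-image, and showing this locus is small — this is exactly the kind of estimate that underlies smallness of the Grothendieck resolution and can be extracted from the root-theoretic stratification of $\widetilde{G}$ by Weyl-group double cosets.

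Then I would descend to $\immub$. The map $\widetilde{G}\times_{T/W}\widetilde{G}\to G\times_{T/W}G$ is proper, and it is birational because over the regular semisimple (indeed over the regular) locus each $\widetilde{G}\to G$ is an isomorphism. A proper birational morphism from a normal variety $X$ onto a variety $Y$ exhibits $\mathcal{O}_Y\hookrightarrow (\text{pushforward of }\mathcal{O}_X)$; to conclude $Y$ is normal it is enough to know $Y$ is Cohen--Macaulay (or at least $S_2$) and that the morphism is an isomorphism in codimension one. Cohen--Macaulayness of $G\times_{T/W}G$ follows as before: $G\to T/W$ is flat, so $G\times_{T/W}G$ is a complete intersection of codimension $l$ in $G\times G$, hence Cohen--Macaulay; and the isomorphism-in-codimension-one statement again reduces to a codimension count, this time for the locus in $G\times_{T/W}G$ over which the two factors fail to be regular, which is governed by the same root data. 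Finally $R_1$ at the level of $\immub$ itself: along the regular locus $\Sigma$ intersects every regular conjugacy class transversally (as recalled in the excerpt), so near points where at least one coordinate is regular the variety is smooth, and the remaining non-regular--non-regular locus has codimension $\ge 2$ by the count above; combined with $S_2$ this gives normality of $\immub$ by Serre's criterion. Alternatively, if one prefers to avoid the explicit estimates, one can cite the fact (due to Richardson, and to Popov for the group case) that the commuting-type fiber products $G\times_{T/W}G$ are normal, or deduce normality from the normality of the Vinberg-monoid fiber product $\Vino\times_{\Tbar}\Vino$ used in Section \ref{sec:dlog} together with properness of $\mubar$; I would present the simultaneous-resolution argument as the main line and mention these as remarks.
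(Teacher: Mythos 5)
Your main proposed line — passing to $\widetilde{G}\times_{T/W}\widetilde{G}$ and then descending — has a concrete error: the Grothendieck--Springer map $\widetilde{G}\to G$ is \emph{not} birational. Over a regular semisimple element $g$ the fiber $\{B'\mid g\in B'\}$ has $|W|$ points (the Borels containing the unique maximal torus through $g$), so $\widetilde{G}\to G$ is generically $|W|$-to-one, and consequently $\widetilde{G}\times_{T/W}\widetilde{G}\to G\times_{T/W}G$ is a finite cover of degree $|W|^2$ generically, not a proper birational map. The descent step you rely on (``a proper birational morphism from a normal variety onto an $S_2$ variety that is an isomorphism in codimension one forces normality'') therefore does not apply, and in addition you leave the key codimension estimate on the resolution side as an acknowledged obstacle rather than proving it. So the main line, as written, does not go through.

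The argument you relegate to a fallback is essentially the paper's proof, and it is the right one to lead with: write $\immub$ as the vanishing locus in $G\times G$ of the $l$ functions $f_i(g)-f_i(h^{-1})$, where $f_1,\ldots,f_l$ are polynomial generators of $\mathbb{C}[G]^G$; this exhibits $\immub$ as a complete intersection of codimension $l$, hence Cohen--Macaulay and $S_2$. For $R_1$ you do not need the transversality of $\Sigma$ to conjugacy classes — that fact is irrelevant here. What you need is Steinberg's result that the differentials $df_1,\ldots,df_l$ are linearly independent at every \emph{regular} element of $G$ (\cite[Theorem~1.5]{ste:65}); this makes $\immub$ smooth wherever at least one of $g$, $h$ is regular. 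The complementary locus, where both are non-regular, has codimension $\geq 2$ in $\immub$ because the irregular locus of $G$ has codimension $\geq 3$ (\cite[Theorem~1.3]{ste:65}) and every fiber of $\Xi$ has the same dimension $\dim G - l$. Serre's criterion then gives normality directly, with no resolution needed. Also, your final remark that normality could be ``deduced from the normality of $\Vino\times_{\Tbar}\Vino$ together with properness of $\mubar$'' does not hold as stated: normality of the source of a proper surjection does not by itself pass to the target.
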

\begin{proof}
Because $\immub$ is the image of $\mubar$, it is irreducible of dimension 
\[2\dim G-l.\]
Let $f_1,\ldots,f_l\in\mathbb{C}[G]^{G}$ be a set of generators for the algebra of conjugation-invariant functions on $G$. Then
\[\immub=\left\{(g,h)\in G\times G\mid f_i(g)=f_i(h^{-1})\text{ for all }1\leq i\leq l\right\}.\]
In particular, $\Delta$ is the vanishing locus of exactly $l$ algebraically independent functions on $G\times G$. Therefore it is a complete intersection, and in particular it is Cohen--Macaulay.

The regular locus 
\[\immub^\text{r}=\left\{(g,h)\in \immub\mid g\text{ and }h\text{ are regular}\right\}\]
is a smooth open subset of $\immub$ because the differentials $df_1,\ldots,df_l$ are linearly independent at every point of $G^\text{r}$ \cite[Theorem 1.5]{ste:65}. Moreover, the complement of $\immub^\text{r}$ in $\Delta$ has codimension at least two \cite[Theorem 1.3]{ste:65}. Since $\immub$ is a Cohen--Macaulay variety with no singularities in codimension one, by Serre's criterion it is normal.
\end{proof}

\begin{lemma}
\label{connectedness}
The fibers of $\mubar$ are connected.
\end{lemma}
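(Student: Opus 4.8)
The plan is to reduce the statement to a connectedness result about fibers of the Vinberg-type action morphism, combined with Lemma \ref{connex} and Lemma \ref{normality}. Fix a point $(g,h)\in\immub$; I want to show $\mubar^{-1}(g,h)$ is connected. Over the open locus where $g$ is regular, the description $\DG\hooklongrightarrow\Gbar\times G\times G$ from \eqref{bottom} identifies the fiber of $\mu$ with $Z_{\ad}(h^{-1})$ (up to the reparametrization), and by Lemma \ref{connex} this is connected since $h^{-1}$ is regular. The issue is the boundary: a fiber of $\mubar$ meets $D$, and I need to control the pieces of the fiber lying in each $G\times G$-orbit $\mathcal{O}_I$ and see that they glue to the interior part connectedly.

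The key step is to use that $\mubar$ is proper (from diagram \eqref{barmoment}) together with normality of the base $\immub$ (Lemma \ref{normality}) to apply Zariski's connectedness/main theorem in the form: a proper morphism with connected generic fiber onto a normal variety has all fibers connected, provided the generic fiber is connected and the morphism has "no splitting," i.e. the Stein factorization $\DGlog\to\widetilde{\immub}\to\immub$ has $\widetilde{\immub}\to\immub$ finite and birational, hence an isomorphism by normality. So the real content is: (i) $\mubar$ is surjective onto $\immub$ with irreducible source $\DGlog$ (it is the closure of the image of $\mu$, and $\DGlog$ is irreducible as the closure of $\DG\cong G_{\ad}\times G$); (ii) the generic fiber is connected — this is exactly the regular-element case handled by Lemma \ref{connex}; (iii) $\immub$ is normal — Lemma \ref{normality}. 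Then Stein factorization plus Zariski's Main Theorem forces every fiber to be connected.

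Concretely I would write: let $\mubar=\beta\circ\gamma$ be the Stein factorization, with $\gamma:\DGlog\to\widetilde{\immub}$ having connected fibers and $\beta:\widetilde{\immub}\to\immub$ finite. Since $\DGlog$ is irreducible, $\widetilde{\immub}$ is irreducible, so $\beta$ is finite and surjective between irreducible varieties of the same dimension $2\dim G-l$. Over the regular locus $\immub^{\mathrm r}$ the fiber of $\mubar$ is $Z_{\ad}(h^{-1})$, connected by Lemma \ref{connex}, so $\beta$ is injective — hence birational — over the dense open $\immub^{\mathrm r}$. A finite birational morphism onto a normal variety is an isomorphism, so $\beta$ is an isomorphism and every fiber of $\mubar$ equals the corresponding connected fiber of $\gamma$.

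The main obstacle is establishing (ii) cleanly, i.e. pinning down that the generic fiber of $\mubar$ is literally $Z_{\ad}(h^{-1})$ and not something larger that also meets the boundary $D$. This requires knowing that for $g$ regular the fiber $\mubar^{-1}(g,h)$ is contained in $\DG=G_{\ad}\times G$ and does not spill into $D$ — equivalently, that no boundary point $(z_I\cdot(\text{something}),x,y)\in\DGlog$ has $\mubar$-image with regular first coordinate. This follows because the $G\times G$-orbits $\mathcal{O}_I$ with $I\neq\{1,\dots,l\}$ have stabilizers $P_I\times_{L_I}P_I^-$ containing positive-dimensional unipotent radicals, forcing the corresponding $x,y$ (hence $g=x$, $h^{-1}=y$) to be non-regular; so the regular locus of $\immub$ is covered only by the interior $\DG$. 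Once that is said, the Stein-factorization argument runs without further calculation.
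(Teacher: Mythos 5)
Your overall strategy — Stein factorization applied to the proper map $\mubar$ onto the normal base $\immub$, using Lemma \ref{connex} for the generic fiber — matches the paper's proof exactly; the paper just compresses your Stein-factorization discussion into the phrase ``Zariski's main theorem.'' But the step you flag as ``the main obstacle'' is handled incorrectly. You claim that for regular $g$ the fiber $\mubar^{-1}(g,h)$ is entirely contained in $\DG$, on the grounds that boundary points $(z_I,x,y)\in\DGlog$ with $(x,y)\in P_I\times_{L_I}P_I^-$ must have $x$ non-regular. This is false: parabolics contain plenty of regular elements. For instance, take $I=\emptyset$ and $x=y=t\in T$ regular semisimple; then $(z_\emptyset,t,t)\in\DGlog$ maps under $\mubar$ to $(t,t^{-1})$, which lies in the regular locus $\immub^{\mathrm r}$. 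Indeed the fiber over $(t,t^{-1})$ is a translate of $\overline{Z_{\ad}(t)}\supset\overline{T_{\ad}}$, which certainly meets $D$. So the generic fiber of $\mubar$ does spill into the boundary, and your stated reason for birationality of $\beta$ over $\immub^{\mathrm r}$ does not hold.

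Fortunately your argument is repairable without changing its shape, and the repair is what the paper actually says: the generic fiber of $\mubar$ is not $Z_{\ad}(h^{-1})$ but its \emph{closure} in $\DGlog$. Connectedness of the closure of a connected set is automatic, so $\beta$ is still generically injective and the Stein-factorization argument runs. What must be checked is that, for general $(g,h)$, the open piece $\mu^{-1}(g,h)$ is actually dense in $\mubar^{-1}(g,h)$ — i.e., that there is no fiber component lying entirely in $D$. This follows from a dimension count: every component of a fiber of the surjection $\mubar:\DGlog\to\immub$ has dimension at least $l=\dim\DGlog-\dim\immub$, while for generic $(g,h)$ the intersection of the fiber with each boundary component of $D$ has dimension at most $l-1$. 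So every fiber component meets $\DG$, hence lies in $\overline{\mu^{-1}(g,h)}$, and the generic fiber is connected. With this correction, your proof is equivalent to the paper's.
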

\begin{proof}
A general fiber of $\mubar$ is the closure in $\DGlog$ of a general fiber of $\mu$, which is connected by Lemma \ref{connex}. Since $\mubar$ is proper, by Stein factorization it decomposes as a composition
\[\mubar=f\circ g\]
where $f$ is a finite morphism and $g$ has connected fibers. 

Because the general fiber of $\mubar$ is connected, $f$ is a birational map. Moreover, since the image of $f$ is normal by Lemma \ref{normality}, it follows from Zariski's main theorem that all the fibers of $f$ are connected. Therefore the fibers of $\mubar$ are also connected.
\end{proof}

\begin{theorem}
\label{bigmain}
The variety $\Zbar$ is smooth and has a natural log-symplectic Poisson structure whose open dense symplectic leaf is $\Z$.
\end{theorem}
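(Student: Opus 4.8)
The plan is to realize $\Zbar$ as the Steinberg slice of the log-nondegenerate quasi-Poisson variety $(\DGlog,\pib,\mubar)$, and then apply Proposition \ref{prop:stein-lognon}. Concretely, I would first identify $\Zbar$ with $\mubar^{-1}(\Sigma_\Delta)$, where $\Sigma_\Delta=\{(h,h^{-1})\mid h\in\Sigma\}\subset G\times G$ is the antidiagonal embedding of the Steinberg cross-section. Since two elements of $\Sigma$ lie in the closure of the same conjugacy class if and only if they are equal (because the composition \eqref{phase} is an isomorphism, so $\Xi$ is injective on $\Sigma$), Lemma \ref{prop:image} gives $\mubar^{-1}(\Sigma_\Delta)=\mubar^{-1}(\Sigma\times\iota(\Sigma))$. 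Unwinding the definition of $\mubar$ in \eqref{barmoment} and the fact that the fiber of $\DGlog$ over $a\in G_{\ad}$ is $\{(aga^{-1},g)\}$, while the fiber over $z_I$ is $P_I\times_{L_I}P_I^-$, one checks that $\mubar^{-1}(\Sigma\times\iota(\Sigma))$ consists exactly of pairs $(a,h)\in\Gbar\times\Sigma$ with $a\in\overline{Z_{\ad}(h)}$; here one uses that $\overline{Z_{\ad}(h)}$ is precisely the fiber of $\Zbar\to\Sigma$ cut out inside $\DGlog$, together with Lemma \ref{connex} and Lemma \ref{connectedness} to ensure the closure of the centralizer behaves as expected.

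Once this identification is in place, the argument is essentially formal. Since $\Sigma\times\iota(\Sigma)$ is a Steinberg cross-section in $G\times G$ (its preimage under $\Xi\times\Xi$ being the obvious product), Theorem \ref{slices} applies to the quasi-Poisson $G\times G$-manifold $\DGlog$: the slice $\mubar^{-1}(\Sigma\times\iota(\Sigma))$ is a smooth submanifold with an induced Poisson structure. To upgrade this to the log-symplectic statement, I would invoke Proposition \ref{prop:stein-lognon}, which requires only that $(\DGlog,\pib,\mubar)$ be log-nondegenerate --- this is exactly Proposition \ref{prop:lognon}. That proposition then yields (a) that $\Zbar\cap D$ is a simple normal crossing divisor in $\Zbar$, so the boundary is as claimed; (b) that $\pi_\Sigma$ is tangent to the boundary; and (c) that $(\Zbar,\pi_\Sigma)$ is log-symplectic. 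Finally, the remark following Proposition \ref{lognoncrit} identifies the open dense nondegenerate leaf of $\DGlog$ as $\DG=\Gbar^\circ\times_{\Gbar}\DGlog$, and intersecting this with the slice recovers $\Z=\mu^{-1}(\Sigma\times\iota(\Sigma))$ as the unique open dense symplectic leaf of $\Zbar$, using part (c) of Theorem \ref{slices}.

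The main obstacle I anticipate is the first step: verifying cleanly that the Steinberg slice $\mubar^{-1}(\Sigma\times\iota(\Sigma))$, computed fiber-by-fiber over $\Gbar$, really coincides with the set-theoretic closure $\{(a,h)\mid a\in\overline{Z_{\ad}(h)}\}$ appearing in the definition of $\Zbar$. A priori the closure of the family of centralizers inside $\DGlog$ could be larger than the fiberwise closure of each $Z_{\ad}(h)$; ruling this out requires flatness or dimension-counting input. One natural approach is to observe that $\DGlog\to\Gbar$ is a smooth group scheme of relative dimension $\dim G$, so $\mubar$ restricted to the slice is proper (as $\mubar$ is, by the properness noted before Lemma \ref{prop:image}) with fibers of the expected dimension, and then use Lemma \ref{connectedness} together with normality of the relevant base (Lemma \ref{normality}) and Zariski's main theorem to conclude the slice is irreducible with connected fibers equal to $\overline{Z_{\ad}(h)}$. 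The remaining steps --- smoothness, the divisor structure, and log-symplecticity --- are then immediate consequences of the machinery already established, with no further calculation required.
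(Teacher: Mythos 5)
Your proposal matches the paper's proof essentially exactly, invoking the same chain of auxiliary results (Lemmas \ref{prop:image}, \ref{normality}, \ref{connectedness}, together with Theorem \ref{slices} and Propositions \ref{prop:lognon}, \ref{prop:stein-lognon}). The only refinement worth noting is the order of operations at the step you flag as the main obstacle: the paper first applies Theorem \ref{slices}(a) to get smoothness of $\mubar^{-1}(\Sigma_\Delta)$, then combines this with connectedness of the $\mubar$-fibers (Lemma \ref{connectedness}) to deduce that $\mubar^{-1}(\Sigma_\Delta)$ is irreducible and hence equals the closure of $\mu^{-1}(\Sigma_\Delta)$ fiberwise, so the identification with $\Zbar$ falls out without the separate flatness or dimension-counting argument you anticipate needing.
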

\begin{proof}
By Propositions \ref{dgqpoisson} and \ref{prop:lognon}, $\DGlog$ is a log-nondegenerate quasi-Poisson variety whose open dense leaf is the double $\DG$. There is a commutative diagram of moment maps
\begin{equation}
\label{comm}
\begin{tikzcd}[row sep=large]
\DG\arrow[hook]{r}\arrow[swap]{rd}{\mu}	&\DGlog\arrow{d}{\mubar} \\
							&G\times G.
							\end{tikzcd}
							\end{equation}

Two elements of $\Sigma$ are in the closure of the same conjugacy class if and only if they are equal. It follows from Lemma \ref{prop:image} that
\[\mubar^{-1}(\Sigma\times\iota(\Sigma))=\mubar^{-1}(\Sigma_\Delta).\]
Since $\Sigma\times\iota(\Sigma)$ is a Steinberg cross-section in $G\times G$, Theorem \ref{slices} implies that the preimage $\mubar^{-1}(\Sigma_\Delta)$ is a smooth subvariety of $\DGlog$ with a natural Poisson structure whose symplectic leaves are the intersections of $\mubar^{-1}(\Sigma_\Delta)$ with the nondegenerate leaves of $\DGlog$. This Poisson structure is log-symplectic by Proposition \ref{prop:stein-lognon}. It remains only to show that $\mubar^{-1}(\Sigma_\Delta)$ is isomorphic to $\Zbar$.

By Lemma \ref{connectedness}, the variety $\mubar^{-1}(\Sigma_\Delta)$ is connected. Since it is also smooth, it is irreducible, and therefore it is the closure in $\DGlog$ of $\mu^{-1}(\Sigma_\Delta)\subset\DG$. In particular, for any $h\in \Sigma$,
\[\mubar^{-1}(h,h^{-1})=\overline{\mu^{-1}(h,h^{-1})}\cong\overline{Z_{\ad}(h)}\subset\Gbar.\]
It follows that
\[\mubar^{-1}(\Sigma_\Delta)=\left\{(a,h,h^{-1})\in\Gbar\times G\times G\mid h\in\Sigma, a\in\overline{Z_{\ad}(h)}\right\}\cong\Zbar.\]
We obtain a commutative diagram
\begin{equation*}
\begin{tikzcd}[row sep=large]
\Z\arrow[hook]{r}\arrow[swap]{rd}	&\Zbar\arrow{d} \\
							&\Sigma,
							\end{tikzcd}
							\end{equation*}
which is the pullback of \eqref{comm} along the embedding $\Sigma\cong\Sigma_\Delta\hooklongrightarrow G\times G$. Since the horizontal arrow in this diagram is the restriction of a backward-Dirac map, it is a Poisson morphism. In particular, $\Z$ sits inside $\Zbar$ as the unique open dense symplectic leaf.
\end{proof}

\begin{remark}
\label{coulomb}
Suppose that $G$ is simply-laced and let $G^\vee$ be its Langlands dual group. Write $\mathcal{K}$ for the field $\mathbb{C}((t))$ of Laurent series, $\mathcal{O}$ for its ring of integers, and 
\[\gr\colonequals G^\vee(\mathcal{K})/G^\vee(\mathcal{O})\]
for the affine Grassmannian of $G^\vee$. In \cite[Proposition 2.8 and Theorem 2.15]{bez.fin.mir:05}, the authors prove that there is a natural isomorphism Poisson algebras
\begin{equation*}
\label{rings}
\mathbb{C}[\Z]\cong K^{G^\vee(\mathcal{O})}(\gr)
\end{equation*}
between the coordinate ring of the multiplicative universal centralizer and the equivariant $K$-theory of $\gr$. Here the right-hand side has a ring structure given by convolution, and its Poisson bracket comes from the one-parameter deformation $K^{G^\vee(\mathcal{O})\times\mathbb{C}^*}(\gr)$ induced by the loop rotation.

The algebra $K^{G^\vee(\mathcal{O})}(\gr)$ has a natural filtration indexed by the weight lattice $\Lambda$ of $G$, which is induced by the support in $G^\vee(\mathcal{O})$-orbit closures. Using the Vinberg monoid realization of the wonderful compactification, one easily shows (see \cite[Section 5]{bal:19}) that the relative compactification $\Zbar$ can also be obtained via relative Proj as
\[\Zbar\cong\text{Proj}_\Sigma\left(\text{Rees}_\Lambda (K^{G^\vee(\mathcal{O})}(\gr)\right).\]
Therefore, in the simply-laced case, our relative compactification $\Zbar$ is an example the Rees algebra approach to compactified Coulomb branches illustrated in \cite[Remark 3.7]{bra.fin.nak:19}.\\
\end{remark}

%
%
%
%
%
%
%
\subsection{Symplectic leaves}
By Proposition \ref{prop:stein-lognon}, the symplectic leaves of $\Zbar$ are the connected components of the intersections of $\Zbar$ with the nondegenerate leaves of $\DGlog$. Therefore we first describe the nondegenerate leaves of the quasi-Poisson variety $(\DGlog,\pib,\mubar)$. For this we need to analyze the image of the (non-logarithmic) bundle map
\[\pib^\#\oplus\rho\colon T^*\DGlog\oplus\mathfrak{g}\oplus\mathfrak{g}\longrightarrow T\DGlog.\]

Fix an index set $I\subset\{1,\ldots,l\}$, and write
\[\mathfrak{c}_I\colon P_I\longrightarrow P_I/[P_I,P_I]=:A_I\]
for the quotient of $P_I$ by its derived subgroup. The torus $A_I$ is the ``universal torus'' associated to the standard parabolic $P_I$. We first give a criterion for when two points in the fiber of $\DGlog$ above $z_I\in\Gbar$ are in the same nondegenerate leaf.

\begin{proposition}
\label{fiberleaf}
Let $(x,y),(x',y')\in P_I\times_{L_I}P_I^-$. Then $(z_I,x,y)$ and $(z_I, x',y')$ are in the same nondegenerate leaf of $(\DGlog,\pib,\mubar)$ if and only if 
\[\mathfrak{c}_I(x)=\mathfrak{c}_I(x').\]
\end{proposition}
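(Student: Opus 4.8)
The plan is to compute the image of the non-logarithmic bundle map $\pib^\#\oplus\rho$ at a point $(z_I,x,y)$ and identify the nondegenerate leaf through it as an orbit of the integrable distribution it spans. The nondegenerate leaves of $\DGlog$ are the integral submanifolds of the distribution $\mathcal{D}:=\im(\pib^\#\oplus\rho)$, so two points lie in the same leaf precisely when they are joined by a path tangent to $\mathcal{D}$. I would first restrict attention to the fiber $P_I\times_{L_I}P_I^-$ over the fixed basepoint $z_I$, noting that $\rho$ contributes the tangent directions to the $G\times G$-orbit $\mathcal{O}_I$ through $z_I$ and that $\pib^\#$, applied to covectors that are conormal to $\mathcal{O}_I$ (i.e. pulled back from the fiber), contributes the vertical directions along the fiber. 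So the content of the statement is purely about which directions along the fiber $P_I\times_{L_I}P_I^-$ lie in $\mathcal{D}$.

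The key computation is to redo the analysis in the proof of Proposition~\ref{prop:lognon}, but now tracking the ordinary (non-logarithmic) covectors and identifying exactly which fiber directions are \emph{missing} from the image. Using the orthonormal eigenbasis $\{E_\alpha\}$ and the formulas for $\pib_D^\#(\theta_\alpha^{1L})$ and $\pib_D^\#(\theta_\alpha^{1R})$ already derived there, together with the directions coming from $\pib^\#$ applied to covectors dual to $e_i^{2L},e_i^{2R},e_i^{3L},e_i^{3R}$, I would show that the vertical part of $\mathcal{D}$ at $(z_I,x,y)$ spans precisely the tangent space to the subvariety of $P_I\times_{L_I}P_I^-$ cut out by fixing the image of the first coordinate in the universal torus $A_I=P_I/[P_I,P_I]$. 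Concretely, the $1$-forms whose $\pib^\#$-image survives in the fiber direction are exactly those dual to $[\mathfrak{p}_I,\mathfrak{p}_I]$, so the ``escaping'' directions along the fiber are exactly $T(\mathfrak{c}_I^{-1}(\text{point}))$; the abelian quotient $\mathfrak{c}_I(x)$ is therefore a complete invariant of the leaf within the fiber. This is essentially a bookkeeping exercise once the eigenbasis is fixed, and it shows that $\mathcal{D}$ restricted to the fiber $P_I\times_{L_I}P_I^-$ is tangent to the fibers of $(x,y)\mapsto\mathfrak{c}_I(x)$, with equality of dimensions — hence the leaves within this fiber are exactly the level sets of $\mathfrak{c}_I$ on the first coordinate.

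For the two directions: the ``only if'' is immediate, since $\mathfrak{c}_I$ composed with projection to the first factor is constant along $\mathcal{D}$ (its differential annihilates every tangent direction we listed), so it is a first integral of the leaf foliation restricted to the fiber over $z_I$. For the ``if'' direction I would exhibit, for any two points of $P_I\times_{L_I}P_I^-$ with the same image under $\mathfrak{c}_I\circ(\text{first projection})$, a path between them tangent to $\mathcal{D}$: this follows because the fibers of $\mathfrak{c}_I$ on $P_I$ are connected (they are cosets of $[P_I,P_I]$, which is connected), and the computation above shows $\mathcal{D}$ contains all tangent directions to such a fiber, so the integral submanifold of $\mathcal{D}$ through $(z_I,x,y)$ contains the whole level set $\{\mathfrak{c}_I(x')=\mathfrak{c}_I(x)\}$.

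The main obstacle I anticipate is the careful vertical/horizontal bookkeeping: one must be sure that the directions $\pib^\#$ produces from fiber-conormal covectors, combined with the image of $\rho$, assemble to give exactly the tangent space to the $\mathfrak{c}_I$-level set and nothing more — i.e. that the dimension count is tight, so that the level set is not just contained in the leaf but equals it. Getting this to work cleanly will likely require choosing the eigenbasis $\mathcal{B}$ adapted to the Levi decomposition $\mathfrak{p}_I=\mathfrak{l}_I\oplus\mathfrak{u}_I$ (so that $[\mathfrak{p}_I,\mathfrak{p}_I]=[\mathfrak{l}_I,\mathfrak{l}_I]\oplus\mathfrak{u}_I$ is spanned by basis vectors) and then matching the case distinctions ($E_\alpha\in\mathfrak{l}_I$ versus $E_\alpha\in\mathfrak{p}_I^-\setminus\mathfrak{l}_I$, etc.) in the $\pib_D^\#$ formulas against the root-space decomposition of $[\mathfrak{p}_I,\mathfrak{p}_I]$. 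Once the basis is set up correctly this is a finite check, and the connectedness of $[P_I,P_I]$ closes the argument.
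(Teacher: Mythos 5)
Your approach matches the paper's in all essentials: restrict to the fiber over $z_I$, compute the vertical part of $\im(\pib^\#\oplus\rho)$ in the eigenbasis $\{E_\alpha\}$, identify it with the tangent space to the fibers of $(x,y)\mapsto\mathfrak{c}_I(x)$, and close with connectedness of those fibers. One bookkeeping caution worth flagging before you carry out the computation: the $1$-forms $\theta_\alpha^{1L}$, $\theta_\alpha^{1R}$ from the proof of Proposition~\ref{prop:lognon} are sections of the \emph{log}-cotangent bundle, not ordinary cotangent vectors at the boundary point $(z_I,x,y)$, so you cannot simply feed them into the non-logarithmic $\pib^\#$. The paper sidesteps this by evaluating the bivector \eqref{finbiv} directly at $(z_I,x,y)$ and reading off which of the $\Gbar$-direction factors $E_\alpha^{1L}$, $E_\alpha^{1R}$ survive as honest (non-log) tangent vectors there --- namely those with $E_\alpha\in\mathfrak{p}_I^-\backslash Z_{\mathfrak{l}_I}$ resp.\ $E_\alpha\in\mathfrak{p}_I\backslash Z_{\mathfrak{l}_I}$ --- and also feeds the elements of $\ker(\mathfrak{g}\times\mathfrak{g}\to T_{z_I}\Gbar)$ described by \eqref{evestab} into $\rho$ to capture the remaining vertical directions. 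Your remark that the dimension count must be tight, so the $\mathfrak{c}_I$-level set equals (not merely contains) the leaf, is exactly the right thing to insist on; with the basis adapted to $\mathfrak{p}_I=\mathfrak{l}_I\oplus\mathfrak{u}_I$ as you suggest, the count goes through.
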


\begin{remark}
The value of $\mathfrak{c}_I(x)$ depends only on the $L_I$-component of the element
\[x\in P_I=L_I\ltimes U_I.\]
Since points in $P_I\times_{L_I}P_I^-$ are pairs with the same Levi component, the proposition could instead be stated in an equivalent way relative to the second coordinate and the negative parabolic $P_I^-$.
\end{remark}

\begin{proof}
In order to determine the intersection of the fiber $\{z_I\}\times (P_I\times_{L_I}P_I^-)$ with each nondegenerate leaf, we will find which vectors in the image of $\pib^\#\oplus\rho$ are tangent to the fibers of $\DGlog$.

By \eqref{evestab}, the kernel of the infinitesimal action map
\[\mathfrak{g}\times\mathfrak{g}\longrightarrow T_{z_I}\Gbar\]
is the subalgebra of pairs
\[\{(u+s,v+t)\in\mathfrak{p}_I\times\mathfrak{p}_I^-\mid u\in\mathfrak{u}_I, v\in\mathfrak{u}_I^-, s,t\in\mathfrak{l}_I, s-t\in Z_{\mathfrak{l}_I}\}.\]
We use the same notation as in the proof of Proposition \ref{prop:lognon}. Viewed as a section of $\wedge^2T\DGlog$, at the point $(z_I,x,y)$ the value of the bivector $\pib$ is
\begin{align*}
\pib=\sum_{E_\alpha\in\mathfrak{p}_I^-\backslash Z_{\mathfrak{l}_I}}E_\alpha^{1L}\wedge(E_\alpha^{2L}+E_\alpha^{2R})
		&+\sum_{E_\alpha\in\mathfrak{p}_I\backslash Z_{\mathfrak{l}_I}}E_\alpha^{1R}\wedge(E_\alpha^{3L}+E_\alpha^{3R})\\
		&\qquad+\sum_{\alpha\in R_0}(E_\alpha^{2L}\wedge E_\alpha^{2R}+E_\alpha^{3R}\wedge E_\alpha^{3L}).
		\end{align*}
Therefore, the vectors in the image of $\pib^\#\oplus\rho$ which are parallel to the fiber of $\DGlog$ at $z_I\in\Gbar$ are given by the span of
\begin{align*}
\left\{E_\alpha^{2L}+E_\alpha^{2R}\mid E_\alpha\in\mathfrak{u}_I^-\right\}\cup&\left\{E_\alpha^{3L}+E_\alpha^{3R}\mid E_\alpha\in\mathfrak{u}_I\right\}\\
			&\qquad\quad\cup\left\{E_\alpha^{2L}+E_\alpha^{2R}+E_\alpha^{3L}+E_\alpha^{3R}\mid E_\alpha\in\mathfrak{l}_I\backslash Z_{\mathfrak{l}_I}\right\}.
\end{align*}
At each point this is the tangent space to the fibers of the smooth morphism
\begin{align*}
P_I\times_{L_I}P_I^-&\longrightarrow A_I\\
	(x,y)&\longrightarrow \mathfrak{c}_I(x).
	\end{align*}
Since these fibers are connected, it follows that two points $(z_I,x,y)$ and $(z_I, x',y')$ are in the same nondegenerate leaf if and only if they have the same image under this map. 
\end{proof}

Let $\DGlogi$ be the preimage of $\mathcal{O}_I\subset\Gbar$ under the structure map
\[\DGlog\longrightarrow \Gbar.\]
Since both $\pi$ and $\rho$ are tangent to the boundary of $\DGlog$, each orbit preimage $\DGlogi$
is a union of nondegenerate leaves. To extend the criterion of Proposition \ref{fiberleaf} to this preimage, we define the following data. 

For any $a\in\Gbar$, there exist group elements $g,h\in G$ such that $a=gz_Ih^{-1}$. We associate to this point a corresponding ``positive'' parabolic subgroup
\[P_a\colonequals gP_Ig^{-1},\]
which, in view of \eqref{evestab}, depends only on the point $a\in\Gbar$. There is a canonical identification of tori
\[P_a/[P_a,P_a]\cong P_I/[P_I,P_I]= A_I,\]
and we denote the corresponding quotient map by
\[\mathfrak{c}_a\colon P_a\longrightarrow P_a/[P_a,P_a]\cong A_I.\]

The preimage $\DGlogi$ is a locally trivial $G\times G$-equivariant fiber bundle over $\mathcal{O}_I$. In other words, there is an isomorphism
\begin{equation*}
\begin{tikzcd}[row sep=large]
\DGlogi\arrow[r, "\sim"]\arrow{rd}	&(G\times G)\times_{\Stab_{G\times G}(z_I)}(P_I\times_{L_I}P_I^-)\arrow{d} \\
							&\mathcal{O}_I,
							\end{tikzcd}
							\end{equation*}
Moreover, the map
\begin{align*}
(G\times G)\times_{\Stab_{G\times G}(z_I)}(P_I\times_{L_I}P_I^-)&\longrightarrow A_I \\
						[(g,h):(x,y)]&\longmapsto\mathfrak{c}_I(x)
						\end{align*}
is well-defined. Composing it with the isomorphism above, we get a smooth morphism
\begin{align*}
\DGlogi&\longrightarrow A_I \\
		(a,x,y)&\longmapsto \mathfrak{c}_a(x).\nonumber
		\end{align*}
In the following proposition we show that its fibers are precisely the nondegenerate quasi-Poisson leaves in $\DGlogi$.		

\begin{proposition}
\label{finalleaf}
Two points $(a,x,y), (b,w,z)\in\DGlogi$ are in the same nondegenerate leaf of $\pib$ if and only if 
\[\mathfrak{c}_a(x)=\mathfrak{c}_b(w).\]
\end{proposition}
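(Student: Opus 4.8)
The plan is to deduce the general statement from Proposition~\ref{fiberleaf}, which already settles the case of the fiber $F:=\{z_I\}\times(P_I\times_{L_I}P_I^-)$ of $\DGlog$ over the basepoint $z_I$, by exploiting $G\times G$-equivariance. Two facts do all the work. First, the nondegenerate leaves of $\DGlog$ are $G\times G$-stable: the distribution $\im(\pib^\#\oplus\rho)$ is $G\times G$-invariant because $\pib$ is $G\times G$-invariant and $\rho$ is the action map, and it contains the tangent directions to the $G\times G$-orbits, so each of its integral submanifolds is carried into itself by $G\times G$ (this is already recorded in Section~\ref{sec:nondeg}). Second, the morphism $\mathfrak{c}:\DGlogi\to A_I$, $(a,x,y)\mapsto\mathfrak{c}_a(x)$, is $G\times G$-invariant; this is immediate from its construction via the bundle isomorphism $\DGlogi\cong(G\times G)\times_{\Stab_{G\times G}(z_I)}(P_I\times_{L_I}P_I^-)$ and the assignment $[(g,h):(x,y)]\mapsto\mathfrak{c}_I(x)$, in which $G\times G$ acts only on the first factor.

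The elementary observation linking the two facts is that every point of $\DGlogi$ can be moved into $F$: given $(a,x,y)$, choose $(g,h)\in G\times G$ with $a=gz_Ih^{-1}$, so that $(g,h)^{-1}\cdot(a,x,y)=(z_I,\,g^{-1}xg,\,h^{-1}yh)\in F$, and $\mathfrak{c}_I(g^{-1}xg)=\mathfrak{c}_a(x)$ by the very definition of $\mathfrak{c}_a$. For the forward implication, if $(a,x,y)$ and $(b,w,z)$ lie in a common nondegenerate leaf $S$, pick $(g_1,h_1)$ and $(g_2,h_2)$ carrying them into $F$; since $S$ is $G\times G$-stable the translates remain in $S\cap F$, and they lie over $z_I$, so Proposition~\ref{fiberleaf} forces them to have the same image under $\mathfrak{c}_I$, whence $\mathfrak{c}_a(x)=\mathfrak{c}_b(w)$ by the invariance of $\mathfrak{c}$. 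Conversely, if $\mathfrak{c}_a(x)=\mathfrak{c}_b(w)$, translate both points into $F$; their translates have equal $\mathfrak{c}_I$-value, so by Proposition~\ref{fiberleaf} they lie in a single nondegenerate leaf $S$, and $G\times G$-stability of $S$ shows that the original points lie in $S$ as well.

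Once Proposition~\ref{fiberleaf} is in hand the argument is purely formal, so I do not expect a substantial obstacle; the only points demanding care are bookkeeping ones. One must check that for $a=gz_Ih^{-1}$ the element $(g,h)$ sends $(a,x,y)$ exactly into $F$, which amounts to noting that the fiber of $\DGlog$ over such an $a$ is $(g,h)\cdot(P_I\times_{L_I}P_I^-)$, a consequence of $G\times G$-stability; and one must check that the two descriptions of $\mathfrak{c}$ agree, i.e. that $\mathfrak{c}_a$ is the composite of the conjugation isomorphism $P_a=gP_Ig^{-1}\to P_I$, $p\mapsto g^{-1}pg$, with $\mathfrak{c}_I$. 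Both the independence of $P_a$ (and of $\mathfrak{c}_a$) from the choice of $(g,h)$ and this compatibility follow from the description \eqref{evestab} of $\Stab_{G\times G}(z_I)$ together with the invariance of $\mathfrak{c}_I$ under $P_I$-conjugation.
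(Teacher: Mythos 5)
Your proof is correct and follows essentially the same approach as the paper's: translate both points into the fiber over $z_I$ using $G\times G$-equivariance, apply Proposition~\ref{fiberleaf}, and use the $G\times G$-stability of nondegenerate leaves together with the invariance $\mathfrak{c}_a(x)=\mathfrak{c}_I(x')$ to conclude. The paper's version is more terse but the two arguments are identical in substance.
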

\begin{proof}
There exist points 
\[(x',y'), (w',z')\in P_I\times_{L_I}P_I^-\]
such that $(a,x,y)$ is $G\times G$-conjugate to $(z_I,x',y')$ and $(b,w,z)$ is $G\times G$-conjugate to $(z_I,w',z')$. Since the nondegenerate leaves of $(\DGlog,\pib)$ are $G\times G$-stable, $(a,x,y)$ and $(b,w,z)$ are in the same leaf if and only if their translates $(z_I,x',y')$ and $(z_I,w',z')$ are in the same leaf. By Proposition \ref{fiberleaf}, this occurs if and only if 
\[\mathfrak{c}_I(x')=\mathfrak{c}_I(w').\]
But now $\mathfrak{c}_a(x)=\mathfrak{c}_I(x')$ and $\mathfrak{c}_b(w)=\mathfrak{c}_I(w')$, and the statement follows.
\end{proof}

The orbit stratification on $\Gbar$ induces a stratification 
\[\Zbar=\bigsqcup \Zbar_I\]
on $\Zbar$, where 
\[\Zbar_I\colonequals \Zbar\cap\DGlogi=\left\{(a,h)\in\Gbar\times\Sigma\mid a\in\overline{Z_{\ad}(h)}\cap\mathcal{O}_I\right\}.\]
By Theorem \ref{slices}(c), each stratum $\Zbar_I$ is a union of symplectic leaves, and Proposition \ref{finalleaf} has the following immediate corollary.

\begin{corollary}
The symplectic leaves of $\Zbar_I$ are the fibers of the smooth morphism
\begin{align*}
\Zbar_I&\longrightarrow A_I \\
(a,h)&\longmapsto \mathfrak{c}_a(h).\\
\end{align*}
\end{corollary}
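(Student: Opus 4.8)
The plan is to extract the symplectic foliation of $\Zbar_I$ from the description of the nondegenerate leaves of $\DGlog$ given in Proposition~\ref{finalleaf}. Recall that $\Zbar\cong\mubar^{-1}(\Sigma_\Delta)$ is a Steinberg slice in the log-nondegenerate quasi-Poisson variety $\DGlog$, so by Proposition~\ref{prop:stein-lognon}(c) --- equivalently, by Theorem~\ref{slices}(c) applied to $\DGlog$ --- the symplectic leaves of $\Zbar$ are the connected components of the intersections $\Zbar\cap S$, where $S$ ranges over the nondegenerate leaves of $(\DGlog,\pib,\mubar)$. Since each orbit preimage $\DGlogi$ is a union of such leaves and $\Zbar_I=\Zbar\cap\DGlogi$, the symplectic leaves lying inside the stratum $\Zbar_I$ are exactly the connected components of $\Zbar_I\cap S$ as $S$ ranges over the nondegenerate leaves contained in $\DGlogi$. (Here $\Zbar_I$ is itself smooth, being the stratum of the simple normal crossing divisor $\Zbar\cap D$ lying over $\mathcal{O}_I\subset\Gbar$, by Proposition~\ref{prop:stein-lognon}(a).)

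The next step is to identify $\Zbar_I\cap S$ with a fiber of the stated morphism. By Proposition~\ref{finalleaf} the nondegenerate leaves inside $\DGlogi$ are precisely the fibers of the smooth map $\DGlogi\to A_I$, $(a,x,y)\mapsto\mathfrak{c}_a(x)$. Under the isomorphism $\Zbar\cong\mubar^{-1}(\Sigma_\Delta)$ of the preceding theorem, the point $(a,h)\in\Zbar$ corresponds to $(a,h,h)\in\DGlog$, so the composite $\Zbar_I\hookrightarrow\DGlogi\to A_I$ is exactly $(a,h)\mapsto\mathfrak{c}_a(h)$, and $\Zbar_I\cap S$ is a fiber of it. This composite is moreover smooth: as in the proof of Theorem~\ref{slices}, the Steinberg slice $\Zbar$ is transverse to the $G\times G$-orbits on $\DGlog$, hence --- these orbits exhausting the nondegenerate leaves --- transverse to each $S$; restricting the smooth morphism $\DGlogi\to A_I$ to the transversal $\Zbar_I$ then yields a submersion.

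The remaining point, which I expect to be the main obstacle, is to show that the fibers of $\Zbar_I\to A_I$ are \emph{connected}, so that ``connected component of $\Zbar_I\cap S$'' may be replaced by ``$\Zbar_I\cap S$'' itself. The fibers of $\DGlogi\to A_I$ are connected, being fiber bundles over the connected orbit $\mathcal{O}_I$ with fibers the connected fibers of $P_I\times_{L_I}P_I^-\to A_I$ (see the proof of Proposition~\ref{fiberleaf}); but restricting to the Steinberg slice a priori only preserves this up to connected components. To handle this I would use the structure map $\Zbar_I\to\Sigma$: a fiber of $\Zbar_I\to A_I$ maps to the affine (hence connected) space $\Sigma$ with fibers of the form $\{\,a\in\overline{Z_{\ad}(h)}\cap\mathcal{O}_I\mid\mathfrak{c}_a(h)=c\,\}$, so, modulo a routine flatness check, it is enough to see that each such fiber is connected. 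This reduces to a statement about a single compactified centralizer $\overline{Z_{\ad}(h)}=\mubar^{-1}(h,h^{-1})$, which is connected by Lemma~\ref{connectedness} and whose group locus $Z_{\ad}(h)$ is connected by Lemma~\ref{connex}; the remaining connectedness then follows by an argument in the spirit of the proof of Lemma~\ref{connectedness} --- using the properness of $\mubar$, the normality of the relevant base, and Zariski's main theorem. Granting this, Proposition~\ref{finalleaf} gives the asserted description of the symplectic leaves of $\Zbar_I$.
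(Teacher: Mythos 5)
Your argument follows the paper's route: Theorem~\ref{slices}(c) reduces the description of the symplectic leaves to the connected components of $\Zbar_I\cap S$ for nondegenerate leaves $S\subset\DGlogi$, Proposition~\ref{finalleaf} identifies those $S$ with fibers of $(a,x,y)\mapsto\mathfrak{c}_a(x)$, and restricting that map along $\Zbar\cong\mubar^{-1}(\Sigma_\Delta)$ gives $(a,h)\mapsto\mathfrak{c}_a(h)$. The paper declares the result ``immediate'' at this point. You correctly observe that the literal statement --- leaves are the \emph{fibers}, not merely their connected components --- needs the additional fact that the fibers of $\Zbar_I\to A_I$ are connected, and that this is not self-evident from what precedes.

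The connectedness sketch you offer does not close the gap. The fiber of $\Zbar_I\to A_I$ over a fixed $c\in A_I$ does not map onto $\Sigma$: a dimension count ($\dim\Zbar_I=l+|I|$, $\dim A_I=l-|I|$, so the fiber has dimension $2|I|$) together with the fact that for fixed $h$ the set $\{\,a\in\overline{Z_{\ad}(h)}\cap\mathcal{O}_I\mid\mathfrak{c}_a(h)=c\,\}$ has dimension $|I|$ shows that $h$ varies only in an $|I|$-dimensional subvariety of $\Sigma$. So ``affine hence connected'' applied to $\Sigma$ is not the relevant fact --- you would need the image of the fiber in $\Sigma$ to be connected, which is not addressed. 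The appeals to ``a routine flatness check'' and to ``an argument in the spirit of Lemma~\ref{connectedness}'' are placeholders rather than proofs: the Zariski-main-theorem argument in that lemma rests on a specific proper morphism with normal image, and no analogous morphism is identified or verified here. You have isolated the one nontrivial step but have not actually carried it out.

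Two small corrections: under $\Zbar\cong\mubar^{-1}(\Sigma_\Delta)$ the point $(a,h)$ corresponds to $(a,h,h^{-1})$, not $(a,h,h)$ (harmless, since $\mathfrak{c}_a$ only involves the middle coordinate); and the parenthetical ``these orbits exhausting the nondegenerate leaves'' has the containment backwards --- as in the proof of Theorem~\ref{slices}, the orbit tangent spaces are \emph{contained} in the tangent spaces of the nondegenerate leaves, which is why transversality to the orbits implies transversality to each leaf.
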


%
%
%
%
%
%
%
\bibliographystyle{plain}
\bibliography{mult}

\end{document}